\newtheorem{thm}{Theorem}
\newtheorem{lem}[thm]{Lemma}
\newtheorem{prop}[thm]{Proposition}
\newtheorem{cor}[thm]{Corollary}
\theoremstyle{definition}
\newtheorem{defn}[thm]{Definition}
\newtheorem{ex}{Example}
\newtheorem{exs}[ex]{Examples}
\theoremstyle{remark}
\newtheorem{rem}[thm]{Remark}
\newtheorem{rems}[thm]{Remarks}
\newtheorem{prob}{Problem}
\newtheorem{asm}[ex]{Assumption}
\newtheorem{alg}[thm]{Algorithm}
\newcommand{\CC}{\mathbb C}
\newcommand{\RR}{\mathbb R}
\newcommand{\ZZ}{\mathbb Z}
\newcommand{\FF}{\mathbb F}
\newcommand{\PP}{\mathbb P}
\newcommand{\LL}{\mathbb L}
\newcommand{\e}{\mathbf e}
\newcommand{\mh}{\mathbf h}
\newcommand{\tensor}{\otimes}
\newcommand{\mat}[1]{\begin{bmatrix} #1 \end{bmatrix}}
\newcommand{\be}{\begin{equation}}
\newcommand{\ee}{\end{equation}}
\DeclareMathOperator{\supp}{\mathrm{supp}}
\DeclareMathOperator{\Aut}{\mathrm{Aut}}
\DeclareMathOperator{\PermAut}{\mathrm{PermAut}}
\DeclareMathOperator{\sign}{\mathrm{sign}}
\DeclareMathOperator{\diag}{\mathrm{diag}}
\DeclareMathOperator{\spn}{\mathrm{span}}
\author{Assaf Goldberger$^{\ast}$ and Giora Dula}
\address{$^{\ast}$Corresponding author}
\date{\today}
\title[Cohomology-Developed Matrices]{Cohomology-Developed Matrices - constructing families of weighing matrices and automorphism actions}
\numberwithin{equation}{section}
\numberwithin{thm}{section}
\numberwithin{ex}{section}
\numberwithin{prob}{section}
\begin{document}

	\begin{abstract}
		The aim of this work is to construct families of weighing matrices via their automorphism group action. The matrices can be reconstructed from the $0,1,2$-cohomology groups of the underlying automorphism group. We use this mechanism to (re)construct the matrices out of abstract group datum. 
		As a consequence, some old and new families of weighing matrices are constructed. These include the Paley Conference, the Projective-Space, the Grassmannian, and the Flag-Variety weighing matrices. We develop a general theory relying on low dimensional group-cohomology for constructing automorphism group actions, and in turn obtain structured matrices that we call \emph{Cohomology-Developed matrices}. This ``Cohomology-Development" generalizes the Cocyclic and Group Developments. The Algebraic structure of modules of Cohomology-Developed matrices is discussed, and an orthogonality result is deduced. We also use this algebraic structure to define the notion of \emph{Quasiproducts}, which is a generalization of the Kronecker-product. 
		
	\end{abstract}

	\maketitle

	\section{Introduction}
	
	Let $w\le N$ be positive integers. A \emph{weighing matrix} of size $N$ and weight $w$ is an $N\times N$-matrix $A$ with entries from $\{0,-1,1\}$ such that $AA^T=wI$. More generally, let $\mu_n$ be the group of complex $n$th roots of unity. A \emph{generalized weighing matrix} of order $N$ and weight $w$ is a square $N\times N$ matrix with entries from $\mu_n^+=\{0\}\cup \mu_n$ such that $WW^*=wI_N$ (* stands for the conjugate-transpose). We denote the collection of all these matrices by $GW(N,w;n)$. We also say that $W$ is a $GW(N,w;n)$. In the case where $\mu_n=\mu_2=\{\pm 1\}$, we are reduced to weighing matrices. We denote the collection of these by $W(N,w)=GW(N,w;2)$. When $N=w$, such matrices are called \emph{Hadamard matrices}. If $n>2$ and $N=w$, such matrices are known in the literature as \emph{Butson} Hadamard matrices. The main question in this area is the existence of a $GW(N,w;n)$. For more information on weighing matrices, see for example \cite{Seb17}.\\
	
	In the search for weighing matrices, people have been looking for matrices with specific structure, which makes the search space considerably smaller. An example for this is the notion of \emph{group-developed matrices} (see e.g. \cite{CraiDeL16,Arasu13}). These are matrices $A$ indexed by some finite group $G$ of size $N$, such that $A_{g,g'}=f(g^{-1}g')$  for some function $f:G\to \mu_n^+$ (In some places in the literature group developed matrices are defined with a different row order). The point is that $AA^*$ is again of the same type (with respect to the ordering convention given here), so that there are only $\lceil N/2\rceil$ orthogonality constraints, and it is conceivable that there will exist a choice of $f$ which will make the matrix orthogonal. In addition, group-developed matrices may be sometimes substituted for the symbols in orthogonal designs, to obtain a Hadamard matrix (see e.g. \cite{koukouvinos1999new}), with or without adding some narrow margins \cite{FlGySeb99}. An important special case abundant in the literature is the case of the circulant matrices,  i.e. $G$ being cyclic (see the Strassler Table \cite{Strass97,Tan2018}).\\
	
	The problem with group-developed $GW(N,w;n)$, is that the weight $w$ must be a perfect square, at least for $n=2$ (see \cite[p. 206]{Arasu13}). An important modification of group-development which liberates us from this constraint, is the notion of \emph{Cocyclic-Development}. This is done by modifying a group-developed matrix, by the entries of a $\mu_n$-valued 2-cocycle on the underlying group \cite{HoDel93}. This construction has originated from multidimensional combinatorial designs. Later it was shown that some Hadamard matrices can be constructed in this way \cite{coc11} and this was extended further to weighing matrices \cite{deLauney:2000}. Much work has been put in the last 30 years on Cocyclic matrices, such as  \cite{BarreraAcevedo2019,lvarez2019,Horadam1993,deLauney2000,Egan2017,Flannery1997}, to give a partial list. For a comprehensive account of cocyclic constructions and their origins, see \cite{DelFla11}.\\
	
	Given a generalized weighing matrix $A$, there are few operations on $A$ that change the matrix, while keeping the resulting matrix orthogonal. We may permute the rows, multiply a row by an element of $\mu_n$ (a sign), and similarly for columns. All these operations and combinations thereof are called \emph{Hadamard operations}, and together form a group. The subgroup of all Hadamard operations that keep $A$ unchanged, is the \emph{Automorphism} group of $A$ (look at Definition \ref{def:aut} below for strict definitions).\\
	
	There is a close connection between cocyclic matrices and automorphism groups. The group $G$ is naturally an automorphism subgroup of any $G$ group-developed matrix \cite[Theorem 2]{coc11}, and a specific central cover $\widetilde{G}$ of $G$ is a subgroup of the automorphism group of a $G$-cocyclic matrix \cite[Theorem 3]{coc11}. Many times though, the automorphism groups of such matrices are larger.\\
	
	In this work, we study the problem of lifting automorphism (sub)groups from $\{0,1\}$-matrices to $\mu_n^+$-matrices. Namely, given a $\{0,1\}$ rectangular matrix $Z$, together with an automorphism subgroup $G$ (which consists only of permutations on the rows and columns), we wish to study $\mu_n^+$-matrices $A$, satisfying $|A|=Z$ (the absolute value is taken componentwise), such that action of $G$ on $Z$ lifts to a monomial action of $G$ on $A$. Cocyclic-developments are the solutions to this automorphism lifting problem (ALP in short), if we begin with a $G$-developed matrix $Z$.\\
	
	In this paper we solve the ALP under the mild restriction (to be removed in future work) of irreducibility (see section 2). In this setting, it turns out that solutions to the ALP are classified by cohomology classes in the cohomology groups $H^i(G,-)$, for $i=0,1,2$, with respect to suitable $G$-modules. We call solutions to the ALP by the name \emph{Cohomology-Developed matrices}. Unlike the tradional usage of cocycles in cocyclic development, here we use cohomology classes in the construction. If one chooses to work with different representing cocycles in the same cohomology classes, the resulting space of solutions will be the same up to diagonal equivalence. This difference is inconsequential if we aim at searching for Hadamard matrices. For example in the case of cocyclic development, when we modify the 2-cocycle in the same class, we obtain a new matrix obtained from the old one by changing the values in the first row, in addition to multiplying rows and columns by signs. So instead of enumerating on 2-cocycles, as is customary, we can select a representative 2-cocycle per each cohomology class, and enumerate on the space of vectors in the first row. It should also be noted that the choice of values in the first row in manifested in the choice of the cohomology class in the cohomology $H^0(G,-)$ mentioned above.\\
	
	Our work has close connection to the work of D.G. Higman on Weighted Association Schemes and Coherent configurations \cite{Higman1987,Sankey2014}. Cohomology-Developed matrices correspond 
	to the `group case' of Higman. In this language, $H^1$-Developed matrices correspond to morphisms between monomial representations, and $H^2$-developed matrices correspond to morphisms between projective monomial representations. The contribution of our work is that such morphisms can be computed by methods of homological algebra, notably by the Eckmann-Shapiro Lemma, and diagram chases along exact sequences. Also, the algebraic structure of Cohomology-Development is clearer under the homological picture (see Theorem \ref{nuniq} for instance). This cohomology interpretation can also be generalized to broader mathematical contexts, beyond representation theory, such as spectral sequences, higher cohomology-developed tensors, algebraic number theory, magic squares and more. These will be discussed in a sequel paper.\\
	
	As a consequence of Cohomology Development, some new examples of (families) of weighing matrices are constructed, along with old ones. We show how Paley's Conference matrices can be constructed and understood from the viewpoint of our theory. The well-known family of Projective-Space weighing matrices is constructed as well. The interesting point is that the orthogonality of such matrices can be concluded from a theoretical principle (see Theorem \ref{orth}), without need for computations. We derive the existence of the new families of Grassmannian and Flag-Varieties weighing matrices, which are quite complicated objects. Nevertheless, their orthogonality follows from the same theoretical principle. Probably many more families can be constructed along these lines. To give a small example, we construct from the theory, without computations, a symmetric weighing matrix, $W(15,9)$, with automorphism subgroup isomorphic to $A_6$. In all cases and proofs, the considerations we make are purely group-theoretical, with analysis of subgroups, homomorphisms and Cohomology classes. We do not need to construct the objects explicitly. However, explicit construction is possible, and an algorithm for constructing the matrices is given (Algorithm \ref{alg}).\\
	
	One further construction we define is the notion of \emph{quasiproducts}. They are a `twisted' version of the Kronecker product of matrices, and are defined under a certain group-theoretical situation. We show that in general, they are not equivalent to Kronecker products, nor even to R. Craigens's weaving products \cite{CR1995}. The measure of `twistedness' is a 2 - Cohomology class of an underlying group. As an example, we construct the families of \emph{quasiprojective, quasigrassmannain} and \emph{quasiflag} weighing matrices.\\ 
	
	The organization of the paper is as follows:
	Section 2 deals with the basic notions of automorphism groups. We define the fundamental notions of irreducibility and orientability. We will formulate the \emph{automorphism lifting problem} (ALP) and define Cohomology-Development, as a solution to the ALP, but without referring to cohomology.\\
	
	 In section 3 we relate this notion to cohomology.
	 This section is devoted to the case of $H^1$-development, which is a solution to the ALP in an important subfamily of cases. We give a practical algorithm for constructing $H^1$-developed matrices. Later we discuss the relationship between the Hadamard multiplication and the addition operation in the cohomology. Section 4 constructs the Paley's Conference and Hadamard matrices, as a consequence of $H^1$-development.\\
	 
	 Section 5 discusses the algebraic structure of Cohomology-Development. There are  algebras, modules and relations between those by the Hadamard products. A fundamental orthogonality result is developed there. We end this section with a discussion of Quasiproducts.\\
	 
	 We proceed in Section 6 to construct the families of (quasi-) projectives-space, grassmanian and flag-variety weighing matrices. In Section 7 we complete the discussion of Section 3, for $H^2$-development. Such matrices can be constructed in two ways: First by $H^1$-development with respect to some extension group. Second and more classically, by 2 - Cohomology classes. We show the equivalence of these constructions. We then show that cocyclic matrices are a special case. By modifying slightly the definition of cocyclic matrices, we arrive at a situation that cocyclic matrices under a given cohomology class form a matrix algebra.\\

	 We end this paper with Appendices A and B, which discuss in brief the necessary background on group cohomology.

	\section{Automorphism Groups}
	\subsection{Notation}
	Throughout the paper we set $\tau=\exp(2\pi i/n)$ the fundamental $n$-th root of unity,  $\mu_n=\langle \tau\rangle \subset \CC^\times$ is the group of complex $n$th roots of unity. Also denote $\mu_n^+=\{0\}\cup \mu_n$. For a complex number $z$ we denote $z^*$ for its complex conjugate. For a complex matrix $A$, let $A^*$ denote its conjugate-transpose.  Let $|A|$ be the matrix obtained from $A$ by taking componentwise moduli. Let $I_r$ be the $r\times r$ identity matrix, and $J_r=(1)$ be the $r\times r$ matrix with constant entries $1$. Sometimes we will write $I$ and $J$ when $r$ is clear from the context. Let $\FF_q$ denote the finite field of $q$ elements. For any positive integer $p$ let $S_p$ be the symmetric group of permutations on $\{1,2,\ldots,p\}$. Let $\mathbf e_i$ denote the standard row vector (of implicit length) whose $i$th-entry is $1$, and all remaining entries are $0$. Let $\mathbf j_r=\mathbf j$ denote the vector of order $r$ with constant entries $1$. 
	The Hadamard multiplication of two matrices $A,B$ of the same size is the matrix $A\circ B$ such that for all $i,j$, $(A\circ B)_{i,j}=A_{i,j}B_{i,j}$. The Hadamard power $A^{\circ m}$, $m\in \ZZ$ is defined as the matrix $B$ with $B_{i,j}=A_{i,j}^m$ if $A_{i,j}\neq 0$, and $B_{i,j}=0$ otherwise. Sometimes we will use the binary operation $\circ$ to denote composition of functions, and there should be no confusion with the Hadamard multiplication. For a complex vector $v$, let $\diag(v)$ be the diagonal matrix $D$ with $D_{i,i}=v_i$.\\

	In this work we will be interested in the following sets of matrices:
	\begin{align*}
		M\mu_n(m,p)\ &= \ \text{The set of all $m\times p$ matrices over }\mu_n^+,\\
		M\mu_n(m)\ &= \ M\mu_n(m,m).
	\end{align*}
	
	Let $Mon(p,\mu_n)$ be the group of all \emph{monomial} $p\times p$ matrices with values in $\mu_n^+$. A monomial matrix is a square matrix with a unique nonzero element in each row and column. The group $S_p$ can be viewed  as the subgroup of permutation matrices in $Mon(p,\mu_n)$. There is the split exact sequence
	
	\begin{equation}
	1 \to \mu_n^p \to Mon(p,\mu_n) \to S_p \to 1,
	\end{equation}
	where $\mu_n^p$ embeds as the diagonal subgroup of $Mon(p,\mu_n)$, and a matrix $P\in Mon(p,\mu_n)$ maps to $|P|$ as a permutation matrix. For every monomial matrix $M\in Mon(p,\mu_n)$, we can write uniquely $M=SP$ where $S$ is a diagonal matrix with diagonal entries in $\mu_n$, and $P$ is a permutation matrix.\\

	There is a natural left action of the group $Mon(m,\mu_n)\times Mon(p,\mu_n)$ on $M\mu_n(m,p)$, given by
	$$(P,Q):\ W \mapsto PWQ^*.$$ Note that the subgroup $Triv:=\langle (\tau I_m,\tau I_p)\rangle$ acts trivially on $M\mu_n(m,p)$, hence the action descends to $(Mon(m,\mu_n)\times Mon(p,\mu_n))/Triv$. For any $A\in M\mu_n(m,p)$ and $g\in (Mon(m,\mu_n)\times Mon(p,\mu_n))/Triv$ , write $gA$ for the action of $g$ on $A$.
	
	\begin{defn}\label{def:aut}\begin{itemize}
			\item[]
			\item[(i)] Two arrays $A,B\in M\mu_n(m,p)$ are said to be \emph{H-equivalent} (H for Hadamard) if $A=LBR^*$ for $L\in Mon(m,\mu_n)$ and $R\in Mon(p,\mu_n)$.
			\item[(ii)] For  $A \in M\mu_n(m,p)$, an \emph{automorphism} of $A$ is a pair $(P,Q)\in  Mon(m,\mu_n)\times Mon(p,\mu_n)$ that leaves $A$ unchanged. 
			\item[(iii)] The set of all automorphisms of $A$ is a group, called the \emph{automorphism group} of $A$ and denoted by $\Aut(A)$. The \emph{reduced automorphism group} is the image $\overline{\Aut(A)}$ of $\Aut(A)$ in $Mon(m,\mu_n)\times Mon(p,\mu_n)/Triv$.
		\end{itemize}
		
	\end{defn}

	The group $S_m\times S_p$ acts on $m\times p$ matrices, and we let $\PermAut(A)$ be the subgroup leaving $A$ invariant. There are natural group homomorphisms
	\begin{equation}
	\Aut(A)\to \overline{\Aut(A)}\to \PermAut(|A|).
	\end{equation}
	
	\begin{ex}\label{examp1}
		Let $A=\mat{1 & -1\\ i & -i}\in M\mu_4(2,2)$. The pairs $P_1=\left(\mat{0 & -1\\ 1 & 0}, \mat{-i & 0 \\ 0 & -i}    \right)$, $P_2=\left(\mat{1 & 0 \\ 0 & 1}, \mat{0 & -1\\ -1 & 0}\right)$, and $P_3=\left( iI,iI\right)$ are in $\Aut(A)$, and in fact generate this group. $P_3$ generates $Triv$. In this example $\Aut(A)$ is isomorphic to the abelian group $\ZZ/4\oplus\ZZ/2\oplus\ZZ/2$, where $P_3$ corresponds to $(1,0,0)$, $P_1$ corresponds to $(1,1,0)$ and $P_2$ corresponds to $(0,0,1)$. $\overline{\Aut(A)}\cong \PermAut(A)\cong \ZZ/2\oplus\ZZ/2$.
	\end{ex}
	
	The following lemma is clear.
	\begin{lem}
		If $A$ is a nonsingular matrix, then both projections
		$\pi_1:\Aut(A) \rightarrow Mon(m,\mu_n)$ and 
		$\pi_2:\Aut(A) \rightarrow Mon(p,\mu_n)$
		   given by $(P,Q)\mapsto P$ and $(P,Q)\mapsto Q$ are injective.
	\end{lem}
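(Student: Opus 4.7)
The plan is to unwind the definition of $\Aut(A)\widetilde{\ }$ as a subset of $Mon(m,\mu_n)\times Mon(p,\mu_n)$ and then cancel $A$. First I would observe that since $Triv=\langle(\tau I_m,\tau I_p)\rangle$ acts trivially on $A$ — indeed $(\tau I_m)A(\tau I_p)^*=\tau\tau^{-1}A=A$ — the formula $(L,R)\cdot A=LAR^*$ gives a well-defined action of the quotient, and a class $[(L,R)]$ lies in $\Aut(A)$ precisely when $LAR^*=A$ for one (equivalently, every) representative. Consequently
\[
\Aut(A)\widetilde{\ }\;=\;\{(L,R)\in Mon(m,\mu_n)\times Mon(p,\mu_n)\,:\,LAR^{*}=A\}.
\]

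With that identification in hand, the two injectivity statements reduce to one-line cancellations. For $\pi_1$, suppose $(L,R)\in\ker\pi_1$, so $L=I_m$. The defining equation becomes $AR^{*}=A$, i.e. $A(R^{*}-I_p)=0$. Since $A$ is nonsingular (so in particular square, $m=p$, and invertible on the left), this forces $R^{*}=I$ and hence $R=I$. For $\pi_2$, an element $(L,I_p)$ in its kernel satisfies $LA=A$, i.e. $(L-I_m)A=0$, which by right-cancellation of the invertible $A$ gives $L=I_m$. In both cases the kernel is the identity of $Mon(m,\mu_n)\times Mon(p,\mu_n)$, which is what is required.

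The only point requiring care — and the main (very small) obstacle — is the first paragraph: one might worry that membership in $\Aut(A)\widetilde{\ }$ only demands $LAR^{*}=A$ up to multiplication by some $(\tau^k I_m,\tau^k I_p)\in Triv$. The observation that $Triv$ acts trivially on $A$ collapses this apparent weakening, and after that the lemma is immediate from invertibility of $A$.
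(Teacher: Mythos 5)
Your proof is correct and is the standard argument the paper has in mind — indeed the paper simply remarks that the lemma is clear and supplies no proof. You correctly identify $\Aut(A)\widetilde{\ }$ with $\{(L,R):LAR^*=A\}$ (the collapse you note, that $Triv$ acts trivially, is exactly why this is legitimate), and then cancellation by the invertible $A$ gives both injectivity claims in one line each.
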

    Note that $A$ in Example \ref{examp1} is singular and the pair $P_2$ is in the kernel of $\pi_1$.

    \subsection{Automorphisms and matrices from $G$-sets}
    
    In the next step we want to reformulate the notion of automorphisms and automorphism groups in the language of $G$-sets. This will give us the advantage of studying the group-theoretic infrastructure behind automorphisms, without looking at a specific matrix. As a consequence we will be able to (re)construct the matrix from this automorphism data. This is in fact excatly the philosophy of our paper.\\

	Let $G$ be a finite group. By a $G$-set we mean a set $X$ with a left $G$-action. Througout the paper let $X,Y$ be finite $G$-sets of cardinalities $m$ and $p$ respectively. A $\mu_n^+$ valued $X\times Y$ matrix is a rectangular $m\times p$ matrix, whose positions are indexed by $X$ and $Y$. In more precise terms, such a matrix is a function $X\times Y\to \mu_n^+$.  We shall denote the set of all $X\times Y$-matrices by $M\mu_n(X,Y)$. This set carries a natural left $G$-action given on a matrix $A=(A_{x,y})$ by $g: (A_{x,y})\mapsto (A_{g^{-1}x,g^{-1}y})$. As is customary, we shall denote this new matrix by $gA$.\\
    
    Note that $X$ and $Y$ may be non-isomorphic $G$-sets, and may even not be of the same cardinality. This in particular means that the $G$-action on the two axes of the matrix is principally different. For example, this is the case of projective-space matrices discussed below. There are useful constructions with rectangular $X\times Y$-matrices, such as \emph{Formal Orthogonal Pairs} \cite{GK2020}. We will not be discusse them here.\\

    It is useful to visualize the orbits of the $G$-action on $X\times Y$ pictorially. For example, take $G=\ZZ/4\ZZ$, a cyclic group, and let the generator $1$ of $G$ act on $X=\{1,2\}$  as the permutation $(1,2)$, and on $Y=\{1,2,3,4\}$ as the permutation $(1,2,3,4)$. Then the orbits on $X\times Y$ can be visualized  as
    $$\begin{bmatrix}
      * & + & * & +\\
      + & * & + & *
    \end{bmatrix}.$$
    
    Inside $M\mu_n(X,Y)$ there is contained the subset of $G$-invariant matrices, i.e. matrices that satisfy $gA=A$ for all $g\in G$. The following easy lemma is easy:
    \begin{lem}
    	$A\in M\mu_n(X,Y)$ is $G$-invariant if and only if it has constant value along each orbit. 
    \end{lem}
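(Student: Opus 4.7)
The plan is to unpack the definitions and directly translate $G$-invariance into a statement about values on orbits of the $G$-action on $X\times Y$.

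First I would observe that, by the definition of the action given just above the lemma, the equation $gA=A$ reads componentwise as $A_{g^{-1}x,g^{-1}y}=A_{x,y}$ for every $(x,y)\in X\times Y$. Substituting $(x,y)\mapsto (gx,gy)$ (permissible because $G$ acts on each factor by bijections), this is equivalent to $A_{x,y}=A_{gx,gy}$ for all $g\in G$ and all $(x,y)$. In other words, $A$ viewed as a function $X\times Y\to\mu_n^+$ is constant along any $G$-orbit of the diagonal action on $X\times Y$ iff $A$ is $G$-invariant.

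For the $(\Leftarrow)$ direction: if $A$ is constant on each orbit, then for any $g$ and any $(x,y)$, the pair $(gx,gy)$ lies in the same orbit as $(x,y)$, so $A_{gx,gy}=A_{x,y}$, which by the reformulation above gives $gA=A$. For the $(\Rightarrow)$ direction: suppose $gA=A$ for all $g$. Two points $(x,y)$ and $(x',y')$ lie in the same orbit precisely when there exists $g\in G$ with $(x',y')=(gx,gy)$, and then $A_{x',y'}=A_{gx,gy}=A_{x,y}$, so $A$ is constant on the orbit containing $(x,y)$.

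There is really no obstacle here; the only thing to be careful about is the inversion $g\leftrightarrow g^{-1}$ in the definition of the action, which is harmless because $G$ is a group and one may replace $g$ by $g^{-1}$ freely. The lemma could therefore be proved in just a couple of lines, and it is justifiably labelled ``clear'' in the text.
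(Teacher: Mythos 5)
Your argument is correct and is exactly the definition-unpacking the paper has in mind when it says the lemma "is clear": the $G$-invariance condition $A_{g^{-1}x,g^{-1}y}=A_{x,y}$ becomes, after the substitution $(x,y)\mapsto(gx,gy)$, the statement that $A$ is constant on diagonal $G$-orbits. Nothing to add.
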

	This is a standard result in permutation groups (see e.g. \cite{Wielandt1964-hk}). Note that $G$ acts by permutations on $X,Y$, however, $G$ is not a permutation group since the action might have kernels.
    \begin{ex}
        Suppose that $X=Y=G$ as left $G$-sets. Then $A\in M\mu_n(G,G)$ is $G$-invariant if and only if it is $G$-developed. Recall the a matrix $A$ is $G$-developed if $A_{x,y}=f(x^{-1}y)$ for some function $f$. Circulant matrices are the special case for $G=\ZZ/m\ZZ$.
    \end{ex}

    \begin{rem}
    	In some places in the literature, \emph{Group-Development} is defined by the equation $A_{x,y}=f(xy)$ for all $x,y\in G$, and for some function $f:G\to \mu_n$. To accommodate this definition, one should take $X=G$, but with the \emph{modified} left $G$-action given by $(g,x)\to xg^{-1}$.
    \end{rem}

	In all $G$-invariant matrices, the group $G$ has a natural map into the automorphism group of $A$ given by $g\mapsto (L(g),R(g))$ where $L(g)$ (resp. $R(g)$) is the permutation matrix corresponding to the action of $g$. All matrices $A\in M\mu_n(X,Y)$ invariant under $G$ can be obtained by computing the $G$-orbits in $X\times Y$ and then assigning to each orbit a constant value in $\mu_n^+$. A matrix $A\in M\mu_n(X,Y)$ is said to be \emph{above} $A_0\in M\mu_1(X\times Y)$, if $A_0=|A|$.\\
	
	\subsection{Irreducibility}\label{2.3}
	We now define an important notion of an irreducible matrix. This is a technical condition that will be needed in the sequel in order to avoid some complications in the theory. In a future work we will get rid of this condition. For the current discussion (\S\ref{2.3}), the group $G$ is not important, and the sets $X,Y$ can be considered as abstract sets.\\
	
	The kernel $$\Delta(A):=\ker\left( \Aut(A) \to \PermAut(|A|)  \right)$$ contains all pairs $(L,R)\in \Aut(A)$ for $L$ and $R$ diagonal. Obviously, $\Delta(A)$ contains the subgroup $Triv=\langle (\tau I_M,\tau I_N)\rangle$. It may sometimes happen, though, that $\Delta(A)$ will be larger than $Triv$. 
	
	\begin{defn}
		Let $A$ be in $M\mu_n(X,Y)$. The \emph{support} of $A$ is the set $\supp(A)$ of all  $(x,y)\in X\times Y$ such that $A_{x,y}\neq 0$.
	\end{defn}
	
	\begin{defn}
		A nonzero matrix $A\in M\mu_n(X,Y)$ is \emph{reducible} if there are nontrivial partitions $X=X_1\cup X_2$ and $Y=Y_1\cup Y_2$, $X_1\cap X_2=\varnothing$, $Y_1\cap Y_2=\varnothing$, such that $\supp(A)$ is contained in $X_1\times Y_1 \sqcup X_2\times Y_2$. Otherwise we say that $A$ is \emph{irreducible}.
	\end{defn}

    \begin{ex}\label{examp2}
    	Let $A=\mat{1 & -1 & 0\\ 0 & 0 & 1}\in M\mu_2(2,3)$. Then $A$ is reducible, for we can take the partitions $X_1=\{1\}$, $X_2=\{2\}$ and $Y_1=\{1,2\}$, $Y_2=\{3\}$, and $\supp(A)=\{(1,1),(1,2),(2,3)\}\subset X_1\times Y_1 \sqcup X_2\times Y_2$.
    \end{ex}
    \begin{ex}
    	If $A\in M\mu_n(X,Y)$ is a matrix with no zero entries (e.g. a Hadamard matrix), then $A$ is irreducible.
    \end{ex}
    
    Note that irreducibility is a property of $|A|$ or equivalently of $\supp(A)$. We now give two equivalent conditions to irreducibility. Let $A$ be an $X\times Y$ matrix. We construct a bipartite graph $\mathcal G=\mathcal G(A)$ on the vertex set $X\sqcup Y$, with an edge  connecting $x\in X$ and $y\in Y$ if and only if $(x,y)\in\supp(A).$\\ 
	
	The notion of matrix irreducibility appears in matrix theory, for example in Perron-Frobenius theory (see \cite{Horn2012}). However this is not identical to what we discuss here. The difference is that in Perron-Frobenius theory the graph is directed, and here the graph is bipartite. The following lemma has an analog in Perron-Frobenius, and the proof is left to the reader. 
    
    \begin{lem}\label{irred}
     The following conditions on $A$ are equivalent:
    	\begin{itemize}
    		\item[(i)] $A$ is irreducible.
    		\item[(ii)] The bipartite graph $\mathcal G(A)$ is connected.
    		\item[(iii)] $\Delta(A)=Triv$ $\Box$.
    	\end{itemize} 
    \end{lem}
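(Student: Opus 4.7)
The plan is to split the three-way equivalence into the combinatorial piece (i)\,$\Leftrightarrow$\,(ii) and the algebraic piece (ii)\,$\Leftrightarrow$\,(iii), proved independently. The first piece recasts reducibility as a vertex-disconnection of $\mathcal G(A)$, and the second parametrises $\Delta(A)$ by $\mu_n$-valued labellings of $X\sqcup Y$ that are constant along edges.

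For (i)\,$\Leftrightarrow$\,(ii), I unpack the definition directly. Giving nontrivial partitions $X=I_1\sqcup I_2$, $Y=J_1\sqcup J_2$ with $\supp(A)\subset (I_1\times J_1)\cup(I_2\times J_2)$ is exactly the same as saying that the vertex sets $V_1:=I_1\sqcup J_1$ and $V_2:=I_2\sqcup J_2$ are both nonempty and no edge of $\mathcal G(A)$ crosses between them, i.e.\ that $\mathcal G(A)$ is disconnected. Conversely, given a disconnection $V_1\sqcup V_2$ of $\mathcal G(A)$, set $I_k=V_k\cap X$ and $J_k=V_k\cap Y$; the only thing to verify is that all four parts are nonempty. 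This follows from the standing assumption (automatic in the weighing-matrix applications and implicit throughout the paper) that $A$ has no zero row or column, for then every vertex of $\mathcal G(A)$ has positive degree and every connected component contains vertices from both $X$ and $Y$.

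For (ii)\,$\Leftrightarrow$\,(iii), any $(L,R)\in\Delta(A)$ is a pair of diagonal matrices $L=\diag(u_x)_{x\in X}$, $R=\diag(v_y)_{y\in Y}$ with $u_x,v_y\in\mu_n$, and the equation $LAR^*=A$ reads entrywise as $u_xv_y^{-1}A_{x,y}=A_{x,y}$, i.e.\ $u_x=v_y$ whenever $(x,y)\in\supp(A)$. Bundling $u$ and $v$ into a single function $X\sqcup Y\to\mu_n$, this is precisely the condition of being constant along each edge of $\mathcal G(A)$, hence constant on each connected component. If $\mathcal G(A)$ is connected, the function is a global constant $c\in\mu_n$, so $(L,R)=(cI_m,cI_p)\in Triv$ and $\Delta(A)=Triv$. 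If $\mathcal G(A)$ has more than one component, the nonzero matrix $A$ forces at least one component to contain vertices of both $X$ and $Y$; assigning one $\mu_n$-value to that ``mixed'' component and a different $\mu_n$-value to another component produces an $(L,R)\in\Delta(A)$ in which at least one of $L,R$ is non-scalar, hence outside $Triv$.

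The main subtlety is the nonempty-parts verification in (i)\,$\Leftrightarrow$\,(ii): a disconnection of $\mathcal G(A)$ concentrated entirely on one side of the bipartition would not by itself yield nontrivial partitions of both $X$ and $Y$. The no-zero-row/no-zero-column hypothesis rules this out; beyond this bookkeeping both equivalences are short, so I would keep the proof brief and flag this assumption explicitly.
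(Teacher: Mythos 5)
Your argument is correct, and it is essentially the same toolkit as the paper's proof (diagonal matrices built from a vertex-2-colouring of $\mathcal G(A)$ in one direction, connectivity forcing constant labellings in the other), organised differently: you prove the two biconditionals (i)\,$\Leftrightarrow$\,(ii) and (ii)\,$\Leftrightarrow$\,(iii) independently, whereas the paper runs a three-step cycle (iii)\,$\Rightarrow$\,(i)\,$\Rightarrow$\,(ii)\,$\Rightarrow$\,(iii). Both structures are fine; the cycle saves a little writing, your decomposition makes the logical dependencies more transparent.

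The genuinely interesting part of your write-up is the subtlety you flag in (i)\,$\Rightarrow$\,(ii). You are right that a disconnection of $\mathcal G(A)$ need not yield nontrivial partitions of \emph{both} $X$ and $Y$: if a connected component sits entirely inside $X$ (a zero row) or entirely inside $Y$ (a zero column), then the induced partition on the opposite side is degenerate, and the definition of reducibility (which insists all four parts $I_1,I_2,J_1,J_2$ be nonempty) is not met even though the graph is disconnected. A two-row example such as $\begin{pmatrix}1&1\\0&0\end{pmatrix}$ makes this concrete: it is irreducible under the stated definition, yet $\mathcal G$ is disconnected and $\Delta$ strictly contains $Triv$. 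The paper's own proof of (i)\,$\Rightarrow$\,(ii) simply asserts that the complement pieces $I_1=C_1\cap X$, $J_1=C_1\cap Y$, etc., form nontrivial partitions without checking nonemptiness, so it is glossing over exactly the point you raise. Your fix — adding the standing hypothesis that $A$ has no zero row or column, which is automatic once $\PermAut(|A|)$ is assumed to act transitively on rows and columns as in the paper's Assumption (*) — is the right one, and you place it where it is needed: only in (i)\,$\Leftrightarrow$\,(ii). It is worth noting explicitly, as you implicitly do, that (ii)\,$\Leftrightarrow$\,(iii) holds unconditionally; the zero-row phenomenon only breaks the combinatorial equivalence, not the algebraic one. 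So your version is correct and, in the precise statement of hypotheses, actually a small improvement on the source.
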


    Note that for $A$ in Example \ref{examp2}, $\Delta(A)$ contains the pair $(\diag(-1,1),\diag(-1,-1,1))$, which is not in $Triv$. Also, the graph $\mathcal G(A)$ can be depicted as
    $$\begin{tikzcd}
    & \bullet \ar[-,dl] \ar[-,d]& \bullet \ar[-,d]\\
    \bullet & \bullet & \bullet
    \end{tikzcd}.$$

	\begin{asm}[*]
		From now, through the end of the paper, we shall assume that $|A|$ is irreducible. In particular we have an exact sequence 
		$$1\to Triv \to \Aut(A) \to \PermAut(|A|).$$ We further assume that $\PermAut(|A|)$ acts transitively
		on the rows and columns of $|A|$.
	\end{asm}

	\subsection{The Automorphism Lifting Problem}\label{lift}
	 Suppose that a finite group $G$ and two finite transitive $G$-sets $X$ and $Y$ are given. Let $\mathcal O\subset X\times Y$ be a $G$-stable subset, which means that $\mathcal O$ is a disjoint union of $G$-orbits. Let $|A|=:A_\mathcal O$ be the characteristic matrix of $\mathcal{O}$: $|A|_{x,y}=1$ if $(x,y)\in \mathcal O$, and $|A|_{x,y}=0$ otherwise. Note that this is the adjacency matrix of $\mathcal G(A_\mathcal O)$.
	 We shall assume that $|A|$ is irreducible. For an ordered set $S$, let $Perm(S)$ denote the group of all permutation matrices indexed by $S$ and $Mon(S,\mu_n)$ the group of all invertible monomial $\mu_n^+$-matrices indexed by $S$. There is a homomorphism $$abs:Mon(S,\mu_n)\to Perm(S)$$ given by the entrywise absolute value $P\mapsto |P|$. Also, if $S$ is a $G$-set, then there is a natural homomorphism $p_S:G\to Perm(S)$.\\
	
	\begin{defn}\label{def:mon_cov}
		Let $G$ be a finite group and $X,Y$ finite $G$-sets. A \emph{monomial cover} of $(G,X,Y)$ with values in $\mu_n$ is a commutative diagram of group homomorphisms	\begin{equation}\label{diag4} \begin{tikzcd}[column sep = huge, row sep=huge]
		\ \ \widetilde{G}\ \  \arrow[dashed,two heads,shorten >= 5pt,shorten <= 5pt]{r}{\rho}  \ar[dashed]{d}{\pi_X\times \pi_Y}
		&\ \ G \ \ \ar{d}{\ p_X\times p_Y}\\
		Mon(X,\mu_n)\times Mon(Y,\mu_n)  \arrow{r}{abs\times abs} & Perm(X)\times Perm(Y)
		\end{tikzcd},
		\end{equation}
		such that $\widetilde{G}$ is a finite group, the map $\rho:\widetilde{G}\to G$ is surjective, and its kernel is mapped by $\pi_X\times \pi_Y$ into $Triv$.
		For brevity we also say that $\widetilde{G}$ is a monomial cover of $G$, or that $\widetilde{G}\to Mon(X,\mu_n)\times Mon(Y,\mu_n)$ is a monomial cover of $G$.
	\end{defn}

    Suppose that $\widetilde{G}$ is a monomial cover of $G$. Let $\pi_X$ and $\pi_Y$ denote the projections from $\widetilde{G}$ to $Mon(X,\mu_n)$ and $Mon(Y,\mu_n)$. We have a left action of $\widetilde{G}$ on $M\mu_n(X,Y)$ given by $ g: A \mapsto g*A:=\pi_X(g)A\pi_Y (g)^*$. We also let $\widetilde{G}$ act on $X$ and $Y$ via the surjection to $G$. Thus we shall write without hesitation $gx$ for $\rho(g)*x$. There will be cases where $\widetilde{G}$ will be equal to $G$. Then it is important to distinguish between the $G$-action (by permutation matrices) and the $\widetilde{G}$ action (by monomial matrices) on a matrix $A$.\\

    This allows us to formulate the \emph{Automorphism Lifting Problem}, which is the main theme of this paper. Informally, we are given a $\{0,1\}$-matrix $A_0$ together with some automorphism subgroup $G\subseteq \PermAut(A_0)$. Then we wish to find a matrix $A$ with $|A|=A_0$, and such that the automorphism subgroup $G$ `lifts' to an automorphism subgroup $\widetilde{G}\subseteq \Aut(A)$. The formal definition of the problem is as follows:
    
    \begin{prob}[The Automorphism Lifting Problem (ALP)]\label{prob1}
    	Suppose that a $\{0,1\}$-$G$-invariant-irreducible-matrix $A_0\in M\mu_1(X,Y)$ is given. Find all matrices $A\in M\mu_n(X,Y)$ satisfying $|A|=A_0$, and a monomial cover $\rho:\widetilde{G}\to G$, such that $g*A=A$ for all $g\in \widetilde{G}$. We then say that $A$ \emph{corresponds} to the monomial cover $\rho:\widetilde{G}\to G$.
    \end{prob}
	
	Notice that in this formulation, we did not require that $G\to Perm(X)\times Perm(Y)$ will be injective, nor did we for $\pi_X\times \pi_Y$. This technical subtlety allows us to use groups which do not act faithfully on the rows or columns or even on the matrix itself. While in most situations we will indeed take this morphism as injection (or even inclusion), there will be other cases (like the projective-space matrices in \S \ref{sec:proj}), where it will be more convenient to use unfaithful actions (cf. remark \ref{notinj} below). We now give a criterion for a matrix $A$ to be the solution of the ALP over $A_0$ without the need to construct explicitly the monomial cover $\widetilde{G}$.
  
    \begin{lem}\label{without cover}
    	Let $X,Y$ be finite $G$-sets, and suppose that $A_0\in M\mu_1(X,Y)$ is irreducible. A matrix $A$ is a solution to the ALP over $A_0$, if and only if for every $g\in G$ there are diagonal matrices $L(g),R(g)$ such that $A=L(g)(gA)R(g)^*$.
    \end{lem}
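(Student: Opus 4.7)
The plan is to prove the two implications separately. The forward direction is bookkeeping with the given cover; the backward direction asks us to build a monomial cover from scratch, and the only nontrivial ingredient there will be Lemma \ref{irred}(iii), which controls the kernel of the covering map. For $(\Rightarrow)$, assume $A$ solves the ALP via $\rho:\widetilde{G}\to G$ with $\widetilde{g}A=A$ for every $\widetilde{g}$. Given $g\in G$, pick any lift $\widetilde{g}\in\rho^{-1}(g)$ and use the split exact sequence $1\to\mu_n^p\to Mon(p,\mu_n)\to S_p\to 1$ to factor $\pi_X(\widetilde{g})=L(g)P_X(g)$ and $\pi_Y(\widetilde{g})=R(g)P_Y(g)$ with $L(g),R(g)$ diagonal and $P_X(g),P_Y(g)$ the permutation matrices of the $G$-action. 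A direct index computation from $(gA)_{x,y}=A_{g^{-1}x,g^{-1}y}$ gives $P_X(g)AP_Y(g)^*=gA$, so $A=\pi_X(\widetilde{g})A\pi_Y(\widetilde{g})^*$ rewrites as $A=L(g)(gA)R(g)^*$, as required.

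For $(\Leftarrow)$, I would realize the cover as a kind of fibre product. Set
\[
\widetilde{G}:=\bigl\{(g,M_X,M_Y)\in G\times Mon(X,\mu_n)\times Mon(Y,\mu_n)\ :\ |M_X|=p_X(g),\ |M_Y|=p_Y(g),\ M_XAM_Y^*=A\bigr\},
\]
with $\rho$ the first projection and $\pi_X\times\pi_Y$ the projection to the last two coordinates. Closure under the group operation is routine using the factorization of a monomial matrix as a diagonal times a permutation, together with the multiplicativity $|MM'|=|M|\cdot|M'|$ on monomial matrices. Surjectivity of $\rho$ is exactly the hypothesis, via the lift $g\mapsto(g,L(g)P_X(g),R(g)P_Y(g))$, and commutativity of the covering diagram is immediate from the definitions.

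The only delicate step is the kernel condition for a monomial cover. Any element of $\ker(\rho)$ is of the form $(e,M_X,M_Y)$ with $M_X,M_Y$ diagonal satisfying $M_XAM_Y^*=A$, so its image under $\pi_X\times\pi_Y$ lies in $\Delta(A)$; since $|A|=A_0$ is irreducible, Lemma \ref{irred}(iii) forces $\Delta(A)=Triv$, whence $(\pi_X\times\pi_Y)(\ker\rho)\subseteq Triv$. Every element of $\widetilde{G}$ fixes $A$ by construction, so $\widetilde{G}$ is a monomial cover of $(G,X,Y)$ exhibiting $A$ as a solution of the ALP, and the argument is complete.
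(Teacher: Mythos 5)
Your proof is correct and follows essentially the same route as the paper: the forward direction factors a lift through the diagonal-times-permutation decomposition, and the backward direction constructs $\widetilde{G}$ as exactly the same fibre-product subgroup of $Mon(X,\mu_n)\times Mon(Y,\mu_n)\times G$, invoking Lemma \ref{irred}(iii) to push $\ker\rho$ into $Triv$. Nothing substantially different from the paper's argument.
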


    \begin{proof}
    	Suppose first that $A$ is a solution to the ALP above $A_0$, together with a monomial cover $\rho:\widetilde{G}\to G$, as in diagram \eqref{diag4}. Then for any $g\in G$, pick up a lift $\tilde g\in \widetilde{G}$ s.t. $\rho(\tilde{g})=g$, and write uniquely $\pi_X(\tilde g)=L(g)|\pi_X(\tilde g)|=L(g)p_X(g)$, and similarly $\pi_Y(\tilde g)=R(g)|\pi_Y(\tilde g)|=R(g)p_Y(g)$ for diagonal matrices $L(g),R(g)$. Then $\tilde g A=A$ is equivalent to $A=L(g)(gA)R(g)^*$. This completes one direction.\\
    	
    	Suppose now that for every $g\in G$ there are diagonal matrices $L(g),R(g)$ s.t. $A=L(g)(gA)R(g)^*$. We need to find a group $\widetilde{G}$, together with a surjective homomorhism $\rho:\widetilde{G}\to G$ and a map $\pi_X\times \pi_Y:\widetilde{G}\to Mon(X,\mu_n)\times Mon(Y,\mu_n)$ such that (i) diagram \eqref{diag4} is commutative, (ii) $\pi_X\times\pi_Y (ker \rho) \subseteq Triv$, and (iii) $\pi_X(\tilde g)A\pi_Y(\tilde g)^*=A$ for all $\tilde g\in \widetilde{G}$. Define
    	\begin{multline}\label{large_ext}
    	\widetilde{G}\ := \ \left\{(P,Q,g) \in Mon(X,\mu_n)\times Mon(Y,\mu_n)\times G \ |\right. \\  \left. |P|=p_X(g), \ |Q|=p_Y(g),\ A=PAQ^* \right\},
    	\end{multline}
    	which is a subgroup of the product $Mon(X,\mu_n)\times Mon(Y,\mu_n)\times G$.
    	The map $\rho:\widetilde{G}\to G$ is given by projecting onto the third coordinate. The maps $\pi_X$ and $\pi_Y$ are the projections to the first two coordinates. The map $\rho$ is surjective, since for every $g\in G$ we may take $\tilde g:=(L(g)p_X(g),R(g)p_Y(g),g)\in \widetilde{G}$ above $g$. Diagram \eqref{diag4} is commutative by the definition of $\widetilde{G}$. Similarly condition (iii) is a consequence of the definition. It remains to prove (ii). If $(P,Q,g)\in ker \rho$, then $g=1_G$, and $|P|$ and $|Q|$ are the identity matrices. Thus $(P,Q)\in \Delta(A)$. But we have assumed that $A$ is irreducible, thus by Lemma \ref{irred} $(P,Q)\in Triv$. This completes the proof.
    	
    \end{proof}

	\begin{rem}
		The monomial cover $\widetilde{G}$ corresponding to the matrix $A$ is not uniquely determined. The proof of Lemma \ref{without cover} generates a specific monomial cover $\widetilde{G}^{univ}$ which is universal in the following sense: For every other monomial cover $\widetilde{G}$ corresponding to $A$, there exists a unique homomorphism $\widetilde{G}\to \widetilde{G}^{univ}$ which commutes with the arrows $\pi_X\times \pi_Y$ and $\rho$ in the corresponding diagrams \eqref{diag4}.
	\end{rem}
	
	In the next few sections, we will understand how to find all monomial covers of $(G,X,Y)$ with the aid of cohomology. 
	Once we obtain a monomial cover, the idea is to (i) break $\mathcal O$ into $G$-orbits, (ii) Fix arbitrary basepoints $(x_O,y_O)\in O$ for every orbit $O\subset \mathcal O$, (iii) Fix arbitrary values $A_{x_O,y_O}\in \mu_n^+$, and (iv) Use the $\widetilde{G}$-action on $A$ to develop uniquely the values $A_{x,y}$ all over $\mathcal O$. But we will soon see that for some orbits $O$ we will be forced set $A_{x,y}=0$ along $O$, as the group action will force conflicting signs. This phenomena will be addressed soon as \emph{non-orientability}.\\
	
	For any $P\in Mon(S,\mu_n)$ and any $s\in S$, denote by $\sign(s,P)\in \mu_n$ the value of the unique nonzero entry of $P$ appearing at the row corresponding to $s$.\\ 
	 
	\begin{defn}\label{orient}
		Suppose that a monomial cover $\widetilde{G}\to G$ is given.  A pair $(x,y) \in X\times Y$ is said to be \emph{orientable} with respect to $\widetilde{G}$, if for every $g\in \widetilde{G}$ with $gx=x$ and $gy=y$, we have $\sign(x,\pi_X(g))=\sign(y,\pi_Y(g))$. Otherwise, we will say that $(x,y)$ is \emph{nonorientable}.
	\end{defn}
	
	Orientability turns out to be a property of an orbit:
		\begin{lem}\label{welldef}
		$(x,y)$ is orientable if and only if all $(x',y')$ in its $G$-orbit are orientable. 
	\end{lem}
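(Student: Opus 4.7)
The plan is to encode the sign data along $X$ and $Y$ as $\mu_n$-valued 1-cocycles and exploit a standard conjugation argument. Define $\alpha_g(x) := \sign(x,\pi_X(g))$ and $\beta_g(y) := \sign(y,\pi_Y(g))$ for $g \in \widetilde{G}$, $x \in X$, $y \in Y$. The first step is to derive the cocycle identity
\[
\alpha_{gh}(x) = \alpha_g(x) \cdot \alpha_h(g^{-1}x),
\]
by a direct computation of $\pi_X(gh) = \pi_X(g)\pi_X(h)$: in row $x$, the unique nonzero entry of the product comes from column $g^{-1}x$ of $\pi_X(g)$ times column $(gh)^{-1}x$ of $\pi_X(h)$, so the entries multiply as claimed. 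The same identity holds for $\beta$ on $Y$. Since $\alpha_1 \equiv 1$, applying the identity to $h \cdot h^{-1}$ yields the inverse formula $\alpha_{h^{-1}}(h^{-1}x) = \alpha_h(x)^{-1}$, equivalently $\alpha_{h^{-1}}(x)\cdot \alpha_h(hx) = 1$.

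Because ``being in the same $G$-orbit'' is a symmetric relation on $X\times Y$ and $\rho$ is surjective, it suffices to prove the forward direction: if $(x,y)$ is orientable and $h \in \widetilde{G}$, then $(x',y'):=(hx, hy)$ is orientable. So let $g' \in \widetilde{G}$ satisfy $g'x' = x'$ and $g'y' = y'$, and set $g := h^{-1} g' h$. Then $gx = h^{-1}g'(hx) = h^{-1}x' = x$, and similarly $gy = y$, so $g$ fixes $(x,y)$.

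Now apply the cocycle identity twice to $\alpha_{h^{-1}\cdot g' \cdot h}(x)$. First,
\[
\alpha_{h^{-1}(g'h)}(x) = \alpha_{h^{-1}}(x)\cdot \alpha_{g'h}(hx) = \alpha_{h^{-1}}(x)\cdot \alpha_{g'h}(x').
\]
Second, $\alpha_{g'h}(x') = \alpha_{g'}(x') \cdot \alpha_h(g'^{-1}x') = \alpha_{g'}(x') \cdot \alpha_h(x')$ because $g'x' = x'$. Combining and using $\alpha_{h^{-1}}(x)\cdot \alpha_h(x') = 1$ from the inverse formula, we get $\alpha_g(x) = \alpha_{g'}(x')$. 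The parallel computation on $Y$ gives $\beta_g(y) = \beta_{g'}(y')$. Orientability of $(x,y)$ says $\alpha_g(x) = \beta_g(y)$, so $\alpha_{g'}(x') = \beta_{g'}(y')$, proving orientability at $(x',y')$.

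The only real obstacle is bookkeeping in the cocycle identity, in particular being careful that the group elements act on the correct side of each sign argument; once the cocycle formula $\alpha_{gh}(x) = \alpha_g(x)\alpha_h(g^{-1}x)$ is in hand, the transport argument is forced.
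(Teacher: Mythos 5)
Your proof is correct and is essentially the same conjugation argument the paper uses: you replace a stabilizer of $(x',y')$ by its conjugate $h^{-1}g'h$, which stabilizes $(x,y)$, and track how the sign data transports. The paper packages the sign bookkeeping in the single identity $g\e_{x,y}=\e_{gx,gy}\,\sign(gx,\pi_X(g))\sign(gy,\pi_Y(g))^*$ for elementary matrices, while you spell out the same computation via the scalar cocycle $\alpha_{gh}(x)=\alpha_g(x)\alpha_h(g^{-1}x)$; these are equivalent bookkeeping devices for the same telescoping cancellation.
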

	
	\begin{proof}
		Orientability at $(x,y)$ means that $\sign(x,\pi_X(g))=\sign(y,\pi_Y(g))$ for all $g$ stabilizing $(x,y)$. Let $(x',y')=(h^{-1}x,h^{-1}y)$ be in the same orbit. Then we need to show that $\sign(x',\pi_X(g'))=\sign(y',\pi_Y(g'))$ for all $g'$ stabilizing $(x',y')$. We know that $g:=h^{-1}g'h$ stabilizes $(x,y)$. Thus it will suffice to show that $\sign(x',\pi_X(g'))=\sign(x,\pi_X(g))$, and similarly for $y$.\\

		We have $$\pi_X(g)=\pi_X(h^{-1}g'h)=\pi_X(h)^{-1}\pi_X(g)\pi_X(h).$$
		Thus 
		\begin{multline*}
			\sign(x,\pi_X(g))=\sign(x,\pi_X(h)^{-1}\pi_X(g')\pi_X(h))\\
			=\sign(x,\pi_X(h)^{-1})\sign(x',\pi_X(g'))\sign(x',\pi_X(h))=\sign(x',\pi_X(g'))
		\end{multline*}
		as desired.
		
	\end{proof}

	\begin{ex}\label{examp3}
		Consider the group $G=B_3$ - the symmetry group of the 3D-cube. $G\subset O(3,\RR)$ is represented as the group of all monomial $3\times 3$ matrices over $\{-1,1\}$, acting on the Euclidean cube $C=[-1,1]^3\subset \RR^3$ by matrix-vector multiplication. We have $|G|=48$. Let 
		$X$ be the $G$-set of all faces of $C$, or what is equivalent, $$X=\{\pm \e_1,\pm \e_2, \pm \e_3\}.$$ Let $Y$ by the $G$-set of all edges of $C$, modulo antipodicity, that is, we identify two edges if they are antipodal. Equivalently,
		$$Y=\{ \pm \e_i \pm \e_j | i<j \}/\{ \pm 1 \}.$$		
		We have $|X|=|Y|=6$. There are two $G$-orbits in $\mathcal O=X\times Y$; One orbit is for pairs $(x,y)$ such that the edge $\pm y$ lies on the face $x$, and the other is the complement. This can be depicted in the following matrix.
		$$ \left({\begin{array}{ccccccc}

			   o & o& o & o & o2 &  o2\\
			 o & o& o & o & o2 &  o2\\
			 o & o& o2 & o2& o & o \\
			 o & o& o2 &  o2& o & o \\
			 o2 &  o2& o & o& o & o \\
			 o2 &  o2& o & o& o & o  \\
	\end{array}}\right)$$
	In the matrix $X$ is ordered as follows: $\pm \e_1<\pm \e_2<\pm \e_3$, $-\e_i<\e_i$. $Y$ is ordered as follows: We always use the representative $\e_i\pm \e_j$ when $i<j$. We set $\e_i\pm \e_j<\e_k\pm \e_l$ if $i<k$ or $i=k$ and $k<l$. Also $\e_i-\e_j<\e_i+\e_j$.
	\end{ex}
	Here we put the $o$ symbol for the positions of the first orbit, and the $o2$ respectively for the second orbit.\\
	
	There are two group homomorphisms $\chi_1,\chi_2:B_3\to \{\pm 1\}$, given by 
	\begin{align*}
	\chi_1(g)&:= \prod_{i,g_{i,j}\neq 0} g_{i,j} \\
	\chi_2(g)&:=  \det(g).
	\end{align*}
	Let $P(g)$ (resp. $Q(g)$) be the $6\times 6$ permutation matrix of the action of $g$ on $X$ (resp. $Y$). We construct the monomial cover
	$ \pi: \widetilde{G}=G \to Mon(X,\mu_2)\times Mon(Y,\mu_2)$, given by
	\be\label{color1} \pi(g) \ := \ \left(\chi_1(g)P(g),\chi_2(g)Q(g)\right).\ee
	Let us show now that the first orbit is orientable for $\widetilde{G}$, but the second one is not.\\
	
	Take the point with $x=\e_1$ and $y=\e_1+\e_2$, which is in the first orbit. The subgroup $H$ stabilizing $(x,y)$, is the group of all diagonal $3\times 3$-matrices generated by $\diag(1,1,-1)$. Restricted to this group, $\chi_1(g)=\chi_2(g)$ for all $g\in H$, which in turn shows that $\sign(x,\chi_1(g)P(g))=\sign(y,\chi_2(g)Q(g))$.\\
	 
	 On the other hand, the point with $x=\e_1$ and $y=\e_2+\e_3$ lies in the second orbit. The group $K$ stabilizing that point contains the element
	 $$t=\mat{1 & 0 & 0\\ 0 & 0 & 1\\ 0 & 1 & 0}.$$
	 We have $\chi_1(t)=1$ and $\chi_2(t)=-1$. In particular $\sign(x,\chi_1(t)P(t))=1$ and $\sign(y,\chi_2(t)Q(t))=-1$, hence this orbit is not orientable.
	 
	\begin{rem}
		The monomial cover $\pi$ in Equation \eqref{color1} is obtained from the `unsigned' monomial cover $g\mapsto (P(g),Q(g))$ by `twisting' with two characters $\chi_1,\chi_2:G\to \mu_n$. We call such monomial covers \emph{principal}. They are the simplest nontrivial monomial covers. We will see below \S\ref{sec3} that there are non-principal monomial covers.
	\end{rem}

	\subsection{Cohomology-Developed matrices} 
	
	\begin{defn}\label{coh-dev}
		Let $G,X,Y,\mathcal O,\mu_n$ ($\mathcal O$ is irreducible) be given data as above. A $X\times Y$ matrix $A$ over $\mu_n^+$ with support $\mathcal O$ is a \emph{cohomology-developed matrix} (CDM in brief) with respect to this data if is satisfies the condition of Lemma \ref{without cover}, i.e.
		$$\forall  g\in G \ \  \exists  \text{ diagonal }L(g),R(g) \text{ s.t. } A=L(g)(gA)R(g)^*.
		$$
	\end{defn}
	By Lemma \ref{without cover} this is equivalent to $A$ being a solution to the ALP over the support $\mathcal O$. Notice that this definition contains no appeal to cohomology. But cohomology is hidden there, and will appear below, see Theorem \ref{nuniq} and \S\ref{cdm2}.\\
	
	 We denote by $\mathbf h(G,X,Y,\mathcal O,\mu_n)$ the collection of these cohomology-developed matrices. We will also use the shorter notation $\mathbf h(G,\mathcal O)$ when $X,Y,n$ are understood.

	\begin{prop}\label{prop:214}
		$\mathbf h(G,\mathcal O)$ is an abelian group under the Hadamard multiplication of matrices.
	\end{prop}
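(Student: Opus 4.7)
The plan is to use Lemma~\ref{without cover} to reduce the group axioms for $\mathbf{h}(G,\mathcal{O})$ to pointwise-in-$g$ statements: a matrix $A$ above $A_\mathcal{O}$ lies in $\mathbf{h}(G,\mathcal{O})$ exactly when for every $g \in G$ there are diagonal matrices $L(g), R(g) \in Mon(X,\mu_n),\, Mon(Y,\mu_n)$ with $A = L(g)(gA)R(g)^*$. The identity of the Hadamard group will be $A_\mathcal{O}$ itself, which is $G$-invariant by $G$-stability of $\mathcal{O}$ and hence satisfies the lemma with $L(g)=R(g)=I$; its Hadamard product with any matrix above $A_\mathcal{O}$ returns that matrix. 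The inverse of $A$ will be the Hadamard power $A^{\circ -1}$, which lies above $A_\mathcal{O}$ since $\mu_n$ is closed under inversion.

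For closure, given $A, B \in \mathbf{h}(G,\mathcal{O})$ with witnesses $L_A(g), R_A(g), L_B(g), R_B(g)$, the essential algebraic identity is that for any diagonal matrices $L, L'$ and matrices $M, M'$ of compatible size,
\[
(LM) \circ (L'M') \;=\; (LL')(M \circ M'),
\]
with the analogous identity on the right; this holds because left (resp.\ right) multiplication by a diagonal scales rows (resp.\ columns) entrywise. Combined with the observation $g(A \circ B) = gA \circ gB$ (both sides merely reindex entries by $g^{-1}$), this yields
\[
A \circ B \;=\; \bigl(L_A(g) L_B(g)\bigr) \bigl(g(A \circ B)\bigr) \bigl(R_A(g) R_B(g)\bigr)^{*},
\]
and $L_A(g) L_B(g),\, R_A(g) R_B(g)$ are again diagonal with entries in $\mu_n$. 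Since $|A \circ B| = |A| \circ |B| = A_\mathcal{O}$, Lemma~\ref{without cover} places $A \circ B$ in $\mathbf{h}(G,\mathcal{O})$.

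For inverses, entrywise inversion of the equation $A = L_A(g)(gA)R_A(g)^*$ on $\supp(A)$ (using $g(A^{\circ -1}) = (gA)^{\circ -1}$) produces
\[
A^{\circ -1} \;=\; L_A(g)^{-1} \bigl(g A^{\circ -1}\bigr) (R_A(g)^{-1})^{*},
\]
and the inverses of $\mu_n$-diagonal matrices are again such; by Lemma~\ref{without cover} this places $A^{\circ -1}$ in $\mathbf{h}(G,\mathcal{O})$, and $A \circ A^{\circ -1} = A_\mathcal{O}$ is immediate. Commutativity and associativity of $\circ$ are inherited from entrywise multiplication in $\CC$. I do not anticipate any serious obstacle: the content is really the bilinearity of the Hadamard product with respect to diagonal scalings, so the bookkeeping of diagonal matrices stays inside $Mon(X,\mu_n) \times Mon(Y,\mu_n)$ automatically, with no cohomological machinery yet required.
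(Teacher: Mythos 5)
Your proof is correct and follows essentially the same route as the paper: reduce membership in $\mathbf h(G,\mathcal O)$ to the pointwise criterion of Lemma~\ref{without cover}, then use the compatibility of the Hadamard product with diagonal left/right scalings and with the $G$-action to verify closure, identity ($|A|=A_{\mathcal O}$), and inverses ($A^{\circ -1}$). The only cosmetic difference is that you spell out the inverse step explicitly whereas the paper states it in passing.
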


    \begin{proof}
    	Let $D_X$(resp. $D_Y$) denote the group of diagonal $\mu_n$-matrices indexed by $X$ (resp. $Y$). Suppose that $A,A'\in \mathbf h(G,\mathcal O)$. Then there are functions $L,L':G \to D_X$ and $R,R':G \to D_Y$ such that for all $g\in G$, $A=L(g)(gA)R(g)^*$ and $A'=L'(g)(gA')R'(g)^*$.  We have for every two $X\times Y$ matrices $A_1,A_2$, every $L_1,L_2\in D_X$ and every $ R_1,R_2\in D_Y$ the equalities $g(A_1\circ A_2)=(gA_1 \circ gA_2)$  and $L_1L_2(A_1\circ A_2)(R_1R_2)^*= (L_1A_1R_1^*)\circ (L_2A_2R_2^*)$. Using these, the functions $g\mapsto L(g)L'(g)$ and $g\mapsto R(g)R'(g)$ satisfy that $(L(g)L'(g))(g(A\circ A'))(R(g)R'(g))^*=A\circ A'$, and we have shown closure under Hadamard multiplication. The neutral element is $|A|$ itself, and the inverse of $A$ is given by the matrix $B$ such that $B_{x,y}=A_{x,y}^{-1}$ when $(x,y)\in \mathcal O$ and $B_{x,y}=0$ otherwise (which is $A^{\circ -1}$ in the notation given above).
    \end{proof}
We say that two matrices in $M\mu_n(X,Y)$ are diagonally equivalent (or briefly $D$-equivalent), if $A=LBR^*$ for invertible diagonal matrices $L$ and $R$ over $\mu_n$. We shall write this relation as $A\sim_D B$. It is easy to see that $\mathbf h(G,\mathcal O)$ is closed under $D$-equivalence. We will see now that $\mathbf h(G,\mathcal{O})$ admits a filtration of length $2$ of subgroups which are closed under $D$-equivalence as well. 
    
    \begin{defn}
    	
    	\begin{itemize} \label{cohd}
    		\item[]
    		\item[(a)] A matrix $A\in \mathbf h(G,\mathcal O)$ is said to be $H^0$-developed if $A\sim_D B$ and $B$ is $G$-invariant (that is $gB=B$ for all $g\in G$ under the permutation action). Denote the collection of $H^0$-developed matrices by $\mathbf h^0(G,\mathcal O)$.
    		\item[(b)]  A matrix $A\in \mathbf h(G,\mathcal O)$ is said to be $H^1$-developed if there exists a monomial cover $\rho:\widetilde{G}\to G$, with $\rho$ being an isomorphism, under which $g*A=A$ for all $g\in \widetilde{G}$. Denote the collection of $H^1$-developed matrices by $\mathbf h^1(G,\mathcal O)$.
    		\item[(c)] Any matrix $A\in \mathbf h(G,\mathcal O)$ is said to be $H^2$-developed. We also denote $\mathbf h(G,\mathcal O)=\mathbf h^2(G,\mathcal O).$
        \end{itemize}
    \end{defn}

    \begin{rem}
    	The case (c) of the definition contains matrices for which the monomial cover $\rho:\widetilde{G}\to G$ cannot be chosen to have a section $s:G\to\widetilde{G}$ ($s$ is a section of $\rho$ if $\rho\circ s=id_G$). Otherwise we could redefine $\widetilde{G}=s(G)$ and $\rho$ would be an isomorphism. Such matrices are fundamentally different from $H^1$-developed matrices. We will see later that nontrivial cocyclic matrices belong to this case.
    \end{rem}

    \begin{rem}\label{notinj}
    	The notion of $H^1$-development is relative to the choice of the group $G$. For we may redefine a posteriory $G$ to be equal to $\widetilde{G}$, and our matrix $A$ will turn out to be $H^1$-developed. But we can still define an \emph{absolute} notion of $H^1$-development if we take $G$ to be a subgroup of $\PermAut(|A|)$.  
    \end{rem}

    \begin{ex} Continuing Example \ref{examp3},
    	consider the monomial cover $\widetilde{G}=G=B_3$ together with the homomorphism $\pi:\widetilde{G}\to Mon(X,\mu_2)\times Mon(Y,\mu_2)$ as defined by equation \eqref{color1}. We take $\mathcal O$ as the $G$-orbit containing $(x_0,y_0)=(\mathbf e_1,\e_1+\e_2)$.	
    	Let us construct a $\widetilde{G}$-invariant matrix $A$, by first fixing $A_{x_0,y_0}=1$, and then developing the values $A_{x,y}$ for all $(x,y)\in \mathcal O$ with the aid of the $\widetilde{G}$-action. For example, take the matrix $g=\mat{1 & 0 & 0\\0 & 0 & -1\\0 & 1 & 0}\in \widetilde{G}$. Then $g$ maps $(x_0,y_0)$ to $(x_1,y_1)=(\e_1,\e_1+\e_3)$, $\chi_1(g)=-1$ and $\chi_2(g)=1$. It follows by \eqref{color1} that $A_{x_1,y_1}=-1$. Due to orientability, every other choice of $g$ mapping $(x_0,y_0)$ to $(x_1,y_1)$ will yield the  same result. Repeating this practice over all $(x,y)\in \mathcal O$ will yield the matrix
    	$$A=\mat{1 & 1 & -1 & -1 & 0 & 0\\
    	         1 & 1 & -1 & -1 & 0 & 0\\
                 -1 & -1 & 0 & 0 & 1 & 1\\
                 -1 & -1 & 0 & 0 & 1 & 1\\
                 0 & 0 & 1 & 1 & -1 & -1\\
                 0 & 0 & 1 & 1 & -1 & -1}.$$
        The matrix $A$ is $H^1$-developed, as we have $\widetilde{G}=G$. It can be seen directly that $A$ is not $H^0$-developed, because $rank(A)=2$, while $rank|A|=3$. Alternatively, this is a consequence of Theorem \ref{nuniq}(b) below.
    \end{ex} 

    \begin{ex} \label{Had2}
    	Let $G=\ZZ/2$ act on $X=Y=G$ by the group addition law. Consider $\mathcal O=X\times Y$. Let $\widetilde{G}=\ZZ/4$ with the surjective homomorphism $\rho:\widetilde{G}\to G$, which sends $1\mod 4$ to $1\mod 2$. Let $\pi:\widetilde{G}\to Mon(X,\mu_2)\times Mon(Y,\mu_2)$ be the group homomorphism defined by the rule
    	$$ \pi(1) = \left(\mat{0 & -1\\ 1 & 0},\mat{0 & -1\\ 1 & 0}\right).$$ We obtain a monomial lifting of $(G,X,Y)$. By fixing the top row of our $X\times Y$ matrix to be $(1,1)$, we obtain the $\widetilde{G}$-invariant (Hadamard) matrix
    	$$A=\mat{1 & 1\\-1&1}.$$
    	This matrix is not $H^0$-developed, since all $H^0$ matrices in this case have determinant zero.  Further, it is not $H^1$-developed w.r.t. $G$. For if it were, we would then must have a homomorphism $G\to Mon(X,\mu_2)\times Mon(Y,\mu_2)$, taking the nontrivial element of $G$ to an anti-diagonal automorphism. But the only anti-diagonal automorphism of $A$ is $\pi(1)$ (up to an element of $Triv$), which has order $4$. This is in fact a $\ZZ/2$ cocyclic matrix.
    \end{ex}

   \begin{prop}\label{210}
   	We have a filtration of abelian groups, closed under D-equivalence:
   	$$0 \subseteq \mathbf h^0(G,\mathcal O)\subseteq \mathbf h^1(G,\mathcal O)\subseteq  \mathbf h^2(G,\mathcal O)=\mathbf h(G,\mathcal O)$$
   \end{prop}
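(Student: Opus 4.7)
The proof has three stages: for each $i \in \{0,1\}$ show that $\mathbf h^i(G,\mathcal O)$ is a subgroup of $\mathbf h(G,\mathcal O)$, that each is closed under D-equivalence, and that the two nontrivial containments hold. The equality $\mathbf h^2 = \mathbf h$ is Definition \ref{cohd}(c), $\mathbf h^2$ is an abelian group by Proposition \ref{prop:214}, and $\mathbf h^1 \subseteq \mathbf h^2$ is tautological since an isomorphism $\rho$ is in particular a monomial cover. Closure of $\mathbf h^2$ under D-equivalence also follows quickly from Lemma \ref{without cover}: given $A = L(g)(gA)R(g)^*$ and $A' = L_0 A R_0^*$, the identity $gA' = L_0^g(gA)R_0^{g*}$, where $L_0^g := P_X(g)L_0 P_X(g)^{-1}$ is still diagonal, yields $A' = \tilde L(g)(gA')\tilde R(g)^*$ with diagonal gauges $\tilde L(g) := L_0 L(g)(L_0^g)^{-1}$ and $\tilde R(g) := R_0 R(g)(R_0^g)^{-1}$.

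For $\mathbf h^0$, D-closure is immediate from transitivity of $\sim_D$. Closure under Hadamard multiplication reduces to the entrywise identity
$(L_0|A|R_0^*)\circ(L_0'|A|R_0'^*) = (L_0 L_0')|A|(R_0 R_0')^*,$
which holds because $|A|_{x,y}^2 = |A|_{x,y}$; inverses are handled by replacing $L_0, R_0$ by their entrywise conjugates. Since $|A|$ itself lies in $\mathbf h^1$ via the tautological cover $g \mapsto (P_X(g), P_Y(g))$, the containment $\mathbf h^0 \subseteq \mathbf h^1$ will follow once D-closure of $\mathbf h^1$ is established.

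The heart of the proof is the subgroup and D-closure properties for $\mathbf h^1$. The key reformulation is: $A \in \mathbf h^1(G,\mathcal O)$ iff there exists a group homomorphism $\phi: G \to Mon(X,\mu_n) \times Mon(Y,\mu_n)$ above the permutation action with $\phi(g)A = A$ for all $g$. Decompose $\phi(g) = (L(g)P_X(g),\, R(g)P_Y(g))$ with $L(g), R(g)$ diagonal; the homomorphism property is equivalent to the cocycle identity $L(gh) = L(g)\cdot P_X(g)L(h)P_X(g)^{-1}$ (and analogously for $R$). Given witnesses $\phi, \phi'$ for $A, A'$, set $\phi''(g) := (L(g)L'(g)P_X(g),\, R(g)R'(g)P_Y(g))$. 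Since the diagonal subgroup of $Mon$ is abelian, the products $LL'$ and $RR'$ inherit the cocycle identity, so $\phi''$ is a homomorphism; and the bilinearity identities for Hadamard multiplication already used in Proposition \ref{prop:214} verify $\phi''(g)(A\circ A') = A \circ A'$. Hadamard inverses are handled by $L(g) \mapsto L(g)^{-1}$, which again preserves the cocycle identity by abelianness. For D-closure, given $A = L_0 A' R_0^*$ with $\phi'$ witnessing $A' \in \mathbf h^1$, define $\phi(g) := \bigl(L_0\, \phi'_X(g)\, L_0^{-1},\; R_0\, \phi'_Y(g)\, R_0^{-1}\bigr)$; this is a homomorphism (conjugation in a group), has the correct absolute values since $|L_0|=|R_0|=I$, and fixes $A$ by direct substitution.

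The only real obstacle is the bookkeeping of the last paragraph: verifying that multiplying only the diagonal parts of monomial matrices preserves the homomorphism property requires invoking the cocycle identity, which in turn survives multiplication precisely because the diagonal subgroup of $Mon$ is abelian. Everything else is formal manipulation of Lemma \ref{without cover}.
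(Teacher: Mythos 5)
Your proof is correct and follows essentially the same route as the paper's: closure of $\mathbf h^0$ and $\mathbf h^1$ under Hadamard product and D-equivalence is established by factoring the witnessing monomial covers into diagonal times permutation parts and multiplying or conjugating the diagonal parts. The paper's proof of closure of $\mathbf h^1$ under $\circ$ is exactly your cocycle-identity computation written out explicitly, and your routing of $\mathbf h^0\subseteq\mathbf h^1$ through the tautological cover for $|A|$ plus D-closure is a minor streamlining of the paper's direct construction of the conjugated cover $\pi_X(g)=LD(g)L^*$.
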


   \begin{proof}
   	  $\mathbf h^0(G,\mathcal O)$ is by definition closed under $D$-equivalence. If $A_1=L_1B_1R_1^*$ and $A_2=L_2B_2R_2^*$ for diagonal $L_1,L_2,R_1,R_2$ and $G$-invariant $B_1,B_2$, then $A_1\circ A_2=(L_1L_2)(B_1\circ B_2)(R_1R_2)^*$, and $B_1\circ B_2$ is still $G$-invariant, proving that  $\mathbf h^0(G,\mathcal O)$ is closed under $\circ$. The group inverse of $A=LBR^*$ is $A^*=L^*B^*R$, hence $\mathbf h^0(G,\mathcal O)$ is closed under inverses as well. It remains to show that (i)  $\mathbf h^0(G,\mathcal O)\subseteq  \mathbf h^1(G,\mathcal O)$, (ii) that  $\mathbf h^1(G,\mathcal O)$ is a subgroup, and (iii) that  $\mathbf h^1(G,\mathcal O)$ is closed under $D$-equivalence.\\
   	  
   	  For (i), Suppose that $A=LBR^*$ is $H^0$-developed, $L,R$ diagonal and $gB=B$. To form the monomial cover \eqref{diag4}, take $\widetilde{G}=G$, $\rho=id_G$, and let the maps $\pi_X:\widetilde{G}\to Mon(X,\mu_n)$, and $\pi_Y:\widetilde{G}\to Mon(Y,\mu_n)$ be defined by $\pi_X(g)=LP(g)L^*$ and $\pi_Y(g)=RQ(g)R^*$, where $(P(g),Q(g))$ is taken to be the image of $g=\rho(g)$ in $\PermAut(|A|)$. This makes diagram \eqref{diag4} commutative, and $A$ becomes $\widetilde{G}$-invariant, as required.\\

   	 The proof of (iii) is quite similar to the $D$-equivalence closure of $\mathbf h^0(G,\mathcal O).$ Suppose that $A\in \mathbf h^1(G,\mathcal O)$, and $\rho:\widetilde{G} \to G$ is an isomorphism giving the monomial cover with respect to $A$. If we take $B=LAR^*\sim_D A$, for diagonal $L,R$, then modify $\pi_X(g)$ to $L\pi_X(g)L^*$ and $\pi_Y(g)$ to $R\pi_X(g)R^*$, leaving $\rho$ the same, which now defines a monomial cover giving $B$.\\
   	 
   	 Finally, let us prove (ii). Suppose that $A_1,A_2\in \mathbf h^1(G,\mathcal O)$, and for both, without loss of generality, we choose $\widetilde{G}=G$ and $\rho=id$. The problem is that $\pi_X(g),\pi_Y(g)$ are different for $A_1$ and $A_2$. To ease notation, let us denote the projection images $(\pi_X\times \pi_Y)(g)$ of $g\in \widetilde{G}$ by  $(M_i(g),N_i(g))$, $i=1,2$. Then we can write uniquely $M_i(g)=L_i(g)P_{X}(g)$ and $N_i(g)=R_i(g)P_{Y}(g)$, for permutation matrices $P_{X},P_{Y}$, and diagonal matrices $L_i,R_i$. Thus $A_i=L_i(g)(\rho(g)A_i)R_i(g)^*$ and it follows that $A_1\circ A_2=(L_1(g)L_2(g))(\rho(g)(A_1\circ A_2))(R_1(g)R_2(g))^*.$ It will be sufficient to show that the maps $g\mapsto L_1(g)L_2(g)P_{X}(g)$ and $g\mapsto R_1(g)R_2(g)P_{Y}(g)$ are group homomorphisms, because they will serve as the new map $\widetilde{G}\to Mon(X,\mu_n)\times Mon(Y,\mu_n)$ for $A_1\circ A_2$.\\
   	 
   	 We proceed for $X$. For all $g,h\in G$, using the fact that $A_i$ are $H^1$-developed,
   	 \begin{multline*}
   	 L_1(gh)L_2(gh)P_X(gh)= L_1(gh) L_2(g)P_X(g)L_2(h)P_X(h)\\
   	 =L_2(g)L_1(gh)P_X(g)L_2(h)P_X(h)=L_2(g)L_1(gh)P_X(gh)P_X(h)^{-1}L_2(h)P_X(h)=\\
   	 L_2(g)L_1(g)P_X(g)L_1(h)P_X(h)P_X(h)^{-1}L_2(h)P_X(h)\\
   	 =(L_1(g)L_2(g)P_X(g))\cdot (L_1(h)L_2(h)P_X(h)),
   	 \end{multline*} 
   	 which proves the homomorphism property and finishes the proof of (ii).  
   \end{proof}

   \begin{rem}
   \begin{itemize}
   	   \item[]
   	   \item[(a)] We will see below (Theorem \ref{nuniq}), that the associated graded quotients $\mathbf h^{i}(G,\mathcal O)/\mathbf h^{i-1}(G,\mathcal O)$ are approximated by cohomology groups of $G$, which is the reason for the name `cohomology-developed'.
   	   \item[(b)] In the special case $X=Y=G$, we will see that $\mathbf h^0(G,\mathcal O)=\mathbf h^1(G,\mathcal O)$, and that $\mathbf h^2(G,\mathcal O)$ is exactly the set of cocyclic $G$-matrices. In this case there is an isomorphism $\mathbf h^2(G,\mathcal O)/\mathbf h^0(G,\mathcal O)\cong H^2(G,\mu_n)$. The intermediate quotient $\mathbf h^1(G,\mathcal O)/\mathbf h^0(G,\mathcal O)$ is not seen by the theory of cocyclic matrices (cf. Theorem \ref{thm:710} below), but in the general case it is significant, see Theorem \ref{nuniq}(b) below.
   \end{itemize}
   \end{rem}

	\section{$H^1$-developed matrices}\label{sec3}
	
	In this section we restrict our attention to the construction of $H^1$-developed $X\times Y$-matrices. We call this the \emph{split} case. Recall that $G$ acts by permutations on $X$ and $Y$, and this action induces a map $G\to Perm(X)\times Perm(Y)$. Restricting our attention to the split case, Problem \ref{prob1} reduces to the following:
	
	\begin{prob}\label{prob2} Given a group $G$, two transitive $G$-sets $X,Y$, and a $G$-stable irreducible subset $\mathcal O \subset X\times Y$, construct all matrices $A\in M\mu_n(X,Y)$ with $\supp(A)=\mathcal O$, admitting a homomorphism $s:G\to Mon(X,\mu_n)\times Mon(Y,\mu_n)$, which lifts the homomorphism $G\to Perm(X)\times Perm(Y)$, such that $s(g)A=A$ for all $g\in G$.
	\end{prob}

	The first step towards constructing $H^1$-developed matrices, is to obtain the monomial covers of $(G,X,Y)$, with $\widetilde{G}=G$.
	Let $D_X=D_X(\mu_n)$ be the group of diagonal $\mu_n$-matrices indexed by $X$. Consider the following (split) exact sequence in the horizontal direction:\\
	\be \label{diag2}
		\begin{tikzcd}			1 \arrow[r] &D_X\times D_Y \arrow[r]& Mon(X,\mu_n)\times Mon(Y,\mu_n)\arrow{r}{\pi} & Perm(X)\times Perm(Y) \arrow[bend right=30,swap]{l}{t}
			\arrow[r] & 1\\
			& & & G \arrow[swap]{u}{\beta} \arrow[ul,"s",dashed,shorten >= 7pt,shorten <= 7pt]
			\arrow[bend right=-20,dashed,shorten >= 7pt,shorten <= 7pt]{ul}{s_0}
		\end{tikzcd}.
	\ee
	The map $\pi$ is $abs\times abs$ and $\beta$ is the structure map of the $G$-action on $X$ and $Y$. $\pi$ has a section $t:Perm(X)\times Perm(Y)\to Mon(X,\mu_n)\times Mon(Y,\mu_n)$ mapping a pair of permutation matrices to itself. A monomial cover of $(G,X,Y)$ with $\widetilde{G}=G$ is then given by a homomorphism $s$, as in the diagram \eqref{diag2}, such that $\pi\circ s=\beta$. We have the \emph{unsigned monomial cover}, which is given by $s_0:=t\circ \beta:G\to Mon(X,\mu_n)\times Mon(Y,\mu_n)$. The `difference' between $s$ and the unsigned monomial cover $s_0$ is measured by the first cohomology, which we now explain Notice that the map $s$ does not necessarily commute with the other maps.\\

	At this point, the first group cohomology of $G$ enters the picture. The readers unfamiliar with basic definitions of group cohomology are referred to Appendix \ref{appA} and the references thereof.\\ 
	
	The groups $D_X,D_Y$ are $G$-modules, via the actions of $G$ on $X,Y$.  There is a bijection 

	\be \label{moncov-homology} H^1(G,D_X\times D_Y)\ \longleftrightarrow \ \text{Maps $s$ as in \eqref{diag2} up to conjugcy by }
		D_X\times D_Y. 
	\ee
	This isomorphism is given by the following recipe: Given a map $s=(s_X,s_Y)$, we write uniquely $s_X(g)=d_X(g)p_X(g)$ and $s_Y(g)=d_Y(g)p_Y(g)$ for diagonals $d_X,d_Y$ and permutations $p_X,p_Y$. Then it is easy to check that the function $g\mapsto (d_X(g),d_Y(g))$ is a 1-cocycle with values in $D_X\times D_Y$. See Appendix \ref{appA} and Theorem \ref{class_sect} for more details.\\
	
	There is a natural isomorphism
	\be \label{diag3} H^1(G,D_X\times D_Y) \cong  H^1(G,D_X) \oplus H^1(G,D_Y).\ee
	which is already an isomorphism at the level of 1-cocycles $Z^1(G,D_X)\oplus Z^1(G,D_Y)\stackrel{\cong}{\longrightarrow} Z^1(G,D_X\times D_Y)$ by $(z_1,z_2) \mapsto z(g)=(z_1(g),z_2(g))$.\\

	 We naturally identify $D_X=\mu_n[X]$ and $D_Y=\mu_n[Y]$ as $G$-modules, where 
	 $$\mu_n[S]:= \left\{\sum_{s\in S} \zeta_s[s] \ \big| \ \zeta_s \in \mu_n\right \}$$
	 is the free abelian group of formal sums with coefficients in $\mu_n$, over a basis $G$-set $S$, with the $G$-action inherited from $S$. The identification sends a diagonal matrix $\diag(v)$ indexed by $S$ to $\sum v_s[s]$. This requires us to compute $H^1(G,\mu_n[X])$ and $H^1(G,\mu_n[Y])$. We will invoke the notion of (co)induced moduels and the Eckmann-Shapiro Lemma. Given a group $G$, a subgroup $H$ and an $H$-module $N$, the coinduced module $CInd_H^GN$ is a $G$-module, with the property that $H^i(H,N)\cong H^i(G,CInd_H^GN)$ for all $i\ge 0$. For definitions and properties of (co)induced modules and for a precise statement of the lemma of Eckmann-Shapiro, the reader in enouraged to look at Appendix \ref{appB}.\\
 	
	 Fix basepoints $x_0\in X$ and $y_0\in Y$. Let $H_X$ (resp. $H_Y$) be the stabilizer of $x_0$ (resp. $y_0$). 
	 Let $N=\mu_n$ be considered as an $H_X$ (resp. $H_Y$)-module with the trivial action. The next lemma claims that $\mu_n[X]$ and $\mu_n[Y]$ are coinduced $G$-modules from $N=\mu_n$.
  	
  	\begin{lem}\label{3.4}
  		As $G$-modules, $$CInd_{H_X}^G\mu_n\cong \mu_n[X], \text{ and } CInd_{H_Y}^G\mu_n\cong \mu_n[Y].$$ The identification for $X$ is as follows: 
  		We have,
  		\begin{multline*}
  		CInd_{H_X}^G \mu_n:=Hom_{\ZZ[H]}(\ZZ[G],\mu_n)\cong 
  		Maps_H(G,\mu_n)\\:=\left\{\text{Functions }\phi:G \to \mu_n \ | \phi(hg)=\phi(g),\ \forall h\in H, \forall g\in G\right\}.
  		\end{multline*}

  		To a function $\phi\in Maps_H(G,\mu_n)$ we match
  		$$\phi \longleftrightarrow \sum_{x\in X} \phi(g_x^{-1})[x]\in \mu_n[X],$$
  		where $g_x\in G$ are elements such that $g_x(x_0)=x$ for all $x\in X$.
  	\end{lem}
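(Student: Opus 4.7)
The plan is to prove the statement for $X$ (the proof for $Y$ is identical by symmetry) in four successive steps, each of which is essentially a direct verification once the conventions are pinned down.

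First, I will establish the preliminary identification $\mathrm{Hom}_{H}(\mathbb{Z}[G],\mu_n)\cong \mathrm{Maps}_{H}(G,\mu_n)$ where $H=H_X$. Since $\mathbb{Z}[G]$ is the free abelian group on $G$, any abelian group homomorphism $\phi:\mathbb{Z}[G]\to \mu_n$ is determined by its values on basis elements $[g]$, giving a function $G\to \mu_n$. The $H$-equivariance condition $\phi([hg])=h\phi([g])$ becomes $\phi(hg)=\phi(g)$ because $H$ acts trivially on $\mu_n$, so we land exactly in $\mathrm{Maps}_{H}(G,\mu_n)$.

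Next, I define the candidate map $\Phi:\mathrm{Maps}_{H}(G,\mu_n)\to \mu_n[X]$ by $\Phi(\phi)=\sum_{x\in X}\phi(g_x^{-1})[x]$, having chosen once and for all representatives $g_x\in G$ with $g_xx_0=x$. I will check that the formula is independent of the choice of $g_x$ (so that $\Phi$ is canonical): if $g_x'x_0=x$ as well, then $g_x^{-1}g_x'\in H$, so $g_x'=g_xh$ for some $h\in H$, giving $(g_x')^{-1}=h^{-1}g_x^{-1}$ and hence $\phi((g_x')^{-1})=\phi(g_x^{-1})$ by left $H$-invariance. I then exhibit the inverse $\Psi:\mu_n[X]\to \mathrm{Maps}_H(G,\mu_n)$ by $\Psi\left(\sum_x a_x[x]\right)(g):=a_{g^{-1}x_0}$; this is $H$-invariant on the left because $h\in H$ fixes $x_0$, and a direct substitution yields $\Phi\circ\Psi=\mathrm{id}$ and $\Psi\circ\Phi=\mathrm{id}$ (for the latter, $\Psi(\Phi(\phi))(g)$ evaluates $\Phi(\phi)$ at the index $g^{-1}x_0$, which picks out $\phi(g_{g^{-1}x_0}^{-1})$, and the argument above shows this equals $\phi(g)$).

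Finally, I will verify $G$-equivariance with respect to the action $(g\phi)(g')=\phi(g'g)$ on $\mathrm{Ind}_H^G\mu_n$ and $g\cdot[x]=[gx]$ on $\mu_n[X]$. Computing $g\cdot\Phi(\phi)=\sum_x\phi(g_x^{-1})[gx]$ and reindexing $y=gx$ gives $\sum_y\phi(g_{g^{-1}y}^{-1})[y]$, while $\Phi(g\phi)=\sum_y\phi(g_y^{-1}g)[y]$. Both $g_y^{-1}g$ and $g_{g^{-1}y}^{-1}$ send $g^{-1}y$ to $x_0$, so their ratio lies in $H$, and the $H$-invariance of $\phi$ forces equality term by term.

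The only real subtlety is bookkeeping — specifically, matching the right-multiplication convention of the $F$-action on $\mathrm{Ind}_E^F M$ used in Theorem \ref{shap} with the left $G$-action on $\mu_n[X]$, which is exactly why the formula involves $\phi(g_x^{-1})$ rather than $\phi(g_x)$. Once this is arranged correctly everything collapses to the observation that if two group elements move $x_0$ to the same point then they differ (on the appropriate side) by a stabilizer element, which is precisely what $H$-invariance absorbs.
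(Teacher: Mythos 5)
Your proof is correct and fills in exactly the verification the paper delegates to the reader ("straightforward to check that this identification is well defined, respects the $G$-action, and bijective"). The well-definedness, inverse construction, and $G$-equivariance checks are all sound, and you correctly use the transitivity of $X$ (implicitly, when choosing $g_x$ for every $x$ and when reducing modulo $H$-invariance), which is the point the paper flags as important.
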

  
	 \begin{proof}
	 	It is straightforward to check that this identification is well defined, respects the $G$-action, and bijective. Note that the transitivity of $X$ is important. We leave the details to the reader.
	 \end{proof}
 	 
 	 This puts us in the position to apply the Eckmann-Shapiro Lemma (Theorem \ref{shap}). To this end we need to set up representatives for the left cosets in $H_X\setminus G$ and $H_Y\setminus G$. As in Lemma \ref{3.4} we fix elements $g_x\in G$ such that $g_xx_0=x$ for all $x\in X$, and $g_y\in G$ such that $g_yy_0=y$ for all $y\in Y$. With this choice, we take $\{g_x^{-1}\}$ as the set of coset representatives to $H_X\setminus G$, and $\{g_y^{-1}\}$ as the set of coset representatives to $H_Y\setminus G$. The following theorem is a direct consequence of the Eckmann-Shapiro lemma.
 	 
	 \begin{thm}\label{cor:35}
	 We have an isomorphism
	 \be\label{38} H^1(G,\mu_n[X]\times \mu_n[Y])\cong Hom(H_X,\mu_n)\oplus Hom(H_Y,\mu_n).\ee
	 
	 The two inverse maps consisting of the isomorphism are as follows. Given two homomorphisms $\psi_X:H_X \to \mu_n$ and $\psi_Y:H_Y\to \mu_n$, a matching 1-cocycle $z\in Z^1(G,\mu_n[X]\oplus \mu_n[Y])$ is given by
	 \be\label{3.9} z(g)\ = \ \left(\sum_{x\in X}\psi_X((g_x^{-1}g)\cdot\overline{(g_x^{-1}g)}^{-1})[x],\sum_{y\in Y}\psi_Y((g_y^{-1}g)\cdot \overline{(g_y^{-1}g)}^{-1})[y] \right).\ee
	 In this equation we use the notation $\overline{q}$ for $q\in G$ to be the element $\overline{q}:=g_z^{-1}$ for the point $z=q^{-1}x_0\in X$. Similarly for $Y$.\\
	
	 In the other direction, given a 1-cocycle $z$, the two homomorphisms are given by \
	 \be \psi_X(h)=z_X(h)_{x=x_0} \text{ and } \psi_Y(h)=z_Y(h)_{y=y_0}.\ee
	 \end{thm}
 
 	\begin{proof}
 		This is a consequence of the Eckmann-Shapiro Lemma (see Theorem \ref{shap} and the formulae there), Lemma \ref{3.4} and that fact that $H^1(G,M)=Hom(G,M)$ for modules $M$ with a trivial $G$-action. 
 	\end{proof}
 
 	 \begin{rem}
 	 	\begin{itemize}
 	 		\item[]
 	 		\item[(a)] Warning: In formula \eqref{3.9}, be aware that the bar notation $\overline{g}$ has twofold meanings, as the coset representative of $g$ in $H_X\backslash G$ or $H_Y \backslash G$, depending on the context. 
 	 		\item[(b)] The precise 1-cocycle $z$ we obtain may depend on the choice of our coset representatives. However, the theory guarantees that its cohomology class is well-defined.
 	 	\end{itemize}
 	 	
 	 \end{rem}
     
     \subsection{Determining the $H^1$-Monomial Covers from the Cohomology.}
     At this point we have collected enough information to construct all monomial covers of $(G,X,Y)$ with $\widetilde{G}=G$. To this end, we need to obtain maps $s:G\to Mon(X,\mu_n)\times Mon(Y,\mu_n)$, that fit in diagram \eqref{diag2}, such that $s=\pi\circ \beta$. Every such map $s$ can be computed from a 1-cocycle $z\in Z^1(G,D_X\times D_Y)$ by the rule
     \be\label{cov} s(g) = z(g)s_0(g).\ee Two cohomologous 1-cocycles will result in $D_X\times D_Y$-conjugate maps $s$. Conversely, two $D_X\times D_Y$ conjugate maps $s,s'$ yield via \eqref{cov} cohomologous 1-cocycles. This is the content of Theorem \ref{class_sect}.\\  
     
     On the other hand, 1-cocycles $z(g)$ can be obtained in turn, up to cohomology,  from a pair of homomorphisms 
     $$\psi_X\in Hom(H_X,\mu_n) \text{ and } \psi_Y\in Hom(H_Y,\mu_n),$$ by applying formula \eqref{3.9}, and 
     using Corollary \ref{cor:35}. This proves the following
     
     \begin{thm}\label{pairA} [cf. Theorem \ref{nuniq}(b) below]\label{cover/conj} Let $X,Y$ be $G$-transitive $G$-sets, and let $H_X,H_Y$ be the stabilizers of the basepoints $x_0\in X$ and  $y_0\in Y$. 
     	Then there is a bijection
     	$$\left\{ \begin{matrix} \text{$\mu_n$-valued Monomial Covers with }\widetilde{G}=G\\
     	\text{up to conjugacy by } D_X\times D_Y    \end{matrix} \right\}  \longleftrightarrow Hom(H_X,\mu_n)\oplus Hom(H_Y,\mu_n).$$
     \end{thm}
	 We give the next lemma as an exercise to the reader:
     \begin{lem}[Principal monomial covers]
		Suppose that $\chi_1,\chi_2:G\to\mu_n$ are two characters. Let $\psi_X=\chi_1|_{H_X}$ and $\psi_Y=\chi_2|_{H_Y}$. Then a monomial cover that corresponds to the pair $(\psi_X,\psi_Y)$ is given by the formula (Using the notation used in Diagram \eqref{diag4})
		$$\pi_X(g)=\chi_1(g)p_X(g),\ \ \pi_Y(g)=\chi_2(g)p_Y(g).$$
		Such monomial covers are called \emph{principal} (cf. Example \ref{examp3}). $\Box$
     \end{lem}
	 In this case the $1$-cocyle in formula \eqref{3.9} should be $$z(g)=\left(\sum_x \chi_1(g)[x],\sum_y \chi_2(g)[y]\right).$$ This is not quite what we get if we substitute $\psi_X=\chi_1$ and $\psi_Y=\chi_2$ in \eqref{3.9}. However, the two cocycles are cohomologous. This amounts to two monomial covers that are conjugates in $D_X\times D_Y$.
     
     \subsection{Example - bilinear form matrices}\label{BilMat}

	 Let $F$ be a finite field and let $V$ be a finite dimensional vector space over $F$ of dimension $d$.
	 The projective space $P(V)$ is the set $V\setminus\{0\}$ modulo the equivalence relation defined by $v\sim_P w$ if and only if $F\cdot v=F\cdot w$. We write $[v]\in P(V)$ for the equivalence class of $v$. 
	 Let $B:V\times V\to F$ be a bilinear form. Let $G\subseteq GL(V)$ be the subgroup of all $g$ such that $B(gv,gw)=B(v,w)$ for all $v,w\in V$.\\

	 We construct a $P(V)\times P(V)$-matrix $M$ as follows. We fix a homomorphism $\chi:F^\times \to \mu_n$, and extend by $\chi(0)=1$. For each class $[v]\in P(V)$, we fix a choice of a representative $[v]\ \widetilde{}\in V$, and let $$M_{[v],[w]} \ := \ \chi\circ B([v]\ \widetilde{},[w]\ \widetilde{}\ ).$$
	 This definition depends on the choices made, but note that different choices of representatives will produce diagonally equivalent matrices. The group $G$ acts naturally on $X=P(V)$, and for $g\in G$ we have that $gM$ is diagonally equivalent to $M$ since the choice of representatives is not compatible with the action of $G$. Thus $M$ is cohomology-developed in the sense of Definition \ref{coh-dev} with respect to $G$, and the group action defines a monomial cover $s:G\to Mon(X,\mu_n)\times Mon(Y,\mu_n)$ given by 
	 $$s(g)\ := \ (D(g)\Pi(g),D(g)\Pi(g)),$$ where $\Pi(g)$ is the permutation matrix of $g$ on $P(V)$ and $D(g)$ is the diagonal matrix defined by
	 \be \label{bilD} D(g)_{[v],[v]}\ := \ \chi\circ \frac{[gv]\ \widetilde{}}{g [v]\ \widetilde{}}\ .\ee
	 Here we `divide' proportional nonzero vectors.\\

	 Choose a basepoint $x_0=[\xi]\in P(V)$ and restricting \eqref{bilD} to $[\xi]$ and $H_X$ we get a map
	 $$\psi_X:H_X\to \mu_n, \psi_X(h)=\chi\circ \frac{[\xi]\ \widetilde{}}{h [\xi]\ \widetilde{}}\ ,$$
	and it is easy to check that this is a homomorphism. Then $(\psi_X,\psi_X)$ is the pair of homomorphisms that matches the monomial cover according to Theorem \ref{pairA}. Note that $\psi_X$ is independent of the choice of representatives, as it really should be.\\

	As a special case we can construct the Payley matrices. In this case we assume that $V=F^2$ is two dimensional and $|P(V)|=|F|+1$. Let $B:V\times V\to F$ be the dererminant form and consequently $G=SL(V)\cong SL(2,F)$. Let us fix the choice $[(x,1)]\ \widetilde{}=(x,1)$ and $[(1,0)]\ \widetilde{}=(1,0)$. With respect to this choice we see that $M_{[(x,1)],[(y,1)]}=\chi(x-y)$ for $x\neq y$ and $1$ if $x=y$. We get the well-known Paley type I matrix (Which is Hadamard If $|F|\equiv 3 \mod 4$ and $n=2$).  Notice the similarity of the affine part with the matrix in \eqref{PayKer}
	below. Moreover, taking $\xi=(0,1)$ then $H_X$ is the group of lower trinagular matrices and we compute 
	$$\psi_X(h)=\chi\circ h_{2,2}^{-1}=\chi\circ h_{1,1}.$$

     \subsection{Orientability of $H^1$-development.}
     Having constructed a monomial cover $G\to Mon(X,\mu_n)\times Mon(Y,\mu_n)$, we now face the issue of orientability, as discussed in \S\ref{lift}. The first observation to make is that orientability of a point $(x,y)$ (or its orbit) depends only on the cohomology class, i.e. only on the two homomorphisms $\psi_X,\psi_Y$.
     
     \begin{lem}
     The orientability of a given point $(x,y)\in X\times Y$ w.r.t. a monomial cover $s:G \to Mon(X,\mu_n)\times Mon(Y,\mu_n)$ depends only on the corresponding homomorphisms $\psi_X,\psi_Y$ and the point $(x,y)$ 
     \end{lem}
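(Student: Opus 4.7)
The plan is to reduce the claim to a conjugation invariance of diagonal entries. By Theorem \ref{cover/conj}, two monomial covers $s, s': G \to Mon(X, \mu_n) \times Mon(Y, \mu_n)$ (with $\widetilde{G} = G$) corresponding to the same pair $(\psi_X, \psi_Y)$ are conjugate over $D_X \times D_Y$: there exist diagonal matrices $D \in D_X$ and $E \in D_Y$ such that $\pi_X(s'(g)) = D\, \pi_X(s(g))\, D^{-1}$ and $\pi_Y(s'(g)) = E\, \pi_Y(s(g))\, E^{-1}$ for every $g \in G$. Hence it suffices to verify that the orientability condition at $(x,y)$ is unaffected by this simultaneous diagonal conjugation.

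For the key step, I would fix any $g \in G$ with $gx = x$ and $gy = y$. Since the underlying permutation $|\pi_X(s(g))|$ fixes $x$, the unique nonzero entry in the $x$-row of $\pi_X(s(g))$ sits at column $x$, so $\sign(x, \pi_X(s(g))) = \pi_X(s(g))_{x,x}$. Because $D$ is diagonal, $(D\, \pi_X(s(g))\, D^{-1})_{x,x} = D_{x,x}\, \pi_X(s(g))_{x,x}\, D_{x,x}^{-1} = \pi_X(s(g))_{x,x}$, and the analogous argument on $Y$ gives $\sign(y, \pi_Y(s'(g))) = \sign(y, \pi_Y(s(g)))$. Thus the equality $\sign(x, \pi_X(s(g))) = \sign(y, \pi_Y(s(g)))$ holds for $s$ if and only if it holds for $s'$, and orientability at $(x, y)$ is identical for both covers.

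The main (minor) obstacle is to identify cleanly what data orientability actually depends on. Once one sees that it is encoded entirely by the diagonal entries of $\pi_X(s(g))$ and $\pi_Y(s(g))$ at elements $g$ in the stabilizer of $(x,y)$, the proof collapses to the trivial observation that diagonal conjugation preserves diagonal entries. In particular, there is no need to unwind the explicit cocycle formula \eqref{3.9} or to worry about the choice of coset representatives $g_x, g_y$; the result follows directly from Theorem \ref{cover/conj} combined with this one-line invariance, and since the cohomology class is in bijection with $(\psi_X, \psi_Y)$ via Corollary \ref{cor:35}, orientability is indeed a function of $\psi_X$, $\psi_Y$ and $(x,y)$ alone.
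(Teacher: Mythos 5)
Your proof is correct and follows essentially the same route as the paper: invoke Theorem \ref{cover/conj} to reduce to diagonally conjugate covers, then observe that for $g$ in the stabilizer of $(x,y)$ the signs $\sign(x,\pi_X(s(g)))$ and $\sign(y,\pi_Y(s(g)))$ are diagonal entries and hence invariant under diagonal conjugation. The only difference is cosmetic — you spell out the diagonal-entry computation that the paper leaves implicit.
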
 
 
     \begin{proof}
     	By Theorem \ref{cover/conj}, two monomial covers that correspond to the same pair $(\psi_X,\psi_Y)$ are diagonally conjugate. Let $s_1,s_2$ be diagonally conjugate monomial covers of $(G,X,Y)$. If $h\in G$ stabilizes a point $(x,y)$, and $s_i(h)=(P_i,Q_i)$, $i=1,2$, then $P_2=DP_1D^{-1}$ and $Q_2=EQ_1E^{-1}$ for some $D,E$ diagonal. This implies that  $\sign(x,P_1)=\sign(x,P_2)$ and $\sign(y,Q_1)=\sign(y,Q_2)$, which proves the lemma. 
     \end{proof}
 
     It thus reasonable to expect a direct criterion for orientability, depending only on $\psi_X,\psi_Y$ and $(x,y)$. Note that the stabilizer of the point $(x,y)$ is the intersection $Stab(x,y):=g_{x}H_Xg_x^{-1}\cap g_yH_Yg_y^{-1}$.
     \begin{prop}\label{nc}
     	A point $(x,y)$ is orientable for an $H^1$-monomial-cover corresponding to $(\psi_X,\psi_Y)$, if and only if 
     	\be\label{310} \forall g\in Stab(x,y), \ \ \ \psi_X(g_x^{-1}gg_x) = \psi_Y(g_y^{-1}gg_y).\ee
     \end{prop}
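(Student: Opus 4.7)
The plan is to unfold the definition of orientability in Definition \ref{orient} and match it term-by-term with the explicit cocycle formula \eqref{3.9}, using Corollary \ref{cor:35}. The only real work is identifying the two sides of \eqref{310} with $\sign(x,\pi_X(g))$ and $\sign(y,\pi_Y(g))$.

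First I would describe the common stabilizer. Since $x=g_xx_0$ and $y=g_yy_0$, the stabilizer of $x$ in $G$ is $g_xH_Xg_x^{-1}$ and the stabilizer of $y$ is $g_yH_Yg_y^{-1}$. Hence the set of $g\in G$ with $gx=x$ and $gy=y$ is precisely the intersection appearing in \eqref{310}, so the quantifier ``$\forall g\in g_xH_Xg_x^{-1}\cap g_yH_Yg_y^{-1}$'' exactly matches the quantifier in the orientability condition.

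Next I would compute the sign at row $x$ of $\pi_X(g)$ for such a stabilizing $g$. By \eqref{cov} and \eqref{3.9}, $\pi_X(g)=D_X(g)P_X(g)$ where $P_X(g)$ is the unsigned permutation and $D_X(g)$ is the diagonal matrix whose $(x,x)$-entry is $\psi_X((g_x^{-1}g)\cdot\overline{(g_x^{-1}g)}^{-1})$. Because $gx=x$, row $x$ of $P_X(g)$ is $\mathbf e_x$, so $\sign(x,\pi_X(g))$ equals this $(x,x)$-entry. The key simplification is to evaluate the bar: by the description of $\overline{q}$ in Corollary \ref{cor:35}, $\overline{g_x^{-1}g}=g_z^{-1}$ where $z=(g_x^{-1}g)^{-1}x_0=g^{-1}g_xx_0=g^{-1}x$. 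Since $g$ stabilizes $x$ we get $z=x$, hence $\overline{g_x^{-1}g}=g_x^{-1}$. Therefore $(g_x^{-1}g)\overline{(g_x^{-1}g)}^{-1}=g_x^{-1}gg_x$, which lies in $H_X$ as it should, and
\[
\sign(x,\pi_X(g))=\psi_X(g_x^{-1}gg_x).
\]
The analogous calculation with $Y$, $H_Y$, $g_y$, $\psi_Y$ gives $\sign(y,\pi_Y(g))=\psi_Y(g_y^{-1}gg_y)$.

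Putting these two identifications into Definition \ref{orient} shows that orientability of $(x,y)$ is equivalent to \eqref{310}, completing the proof. The only step requiring any care is the bar computation above; everything else is substitution. Note that this also confirms the preceding lemma, since the right-hand side depends only on $\psi_X,\psi_Y$ and not on which representative of the $D_X\times D_Y$-conjugacy class of monomial covers we picked.
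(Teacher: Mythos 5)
Your proposal is correct and follows essentially the same route as the paper: identify the common stabilizer, evaluate the cocycle formula \eqref{3.9} at a stabilizing $g$, and observe that $\overline{g_x^{-1}g}=g_x^{-1}$ (the paper derives this directly from $g_x^{-1}g\in H_Xg_x^{-1}$, you derive it via the explicit description of $\overline{q}$ in Corollary~\ref{cor:35}; same content). The final parenthetical about confirming the preceding lemma is a sensible sanity check, though the paper proves that lemma independently.
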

	 Note that for $g\in Stab(x,y)$ we have $g_x^{-1}gg_x\in H_X$ and $g_yH_Yg_y^{-1}\in H_Y$, so the lemma makes sense.
     
     \begin{proof}
     	For $g\in G$ s.t. $gx=x$ and $gy=y$, the signs $\sign(x,g)$ and $\sign(y,g)$ are given (in the additive notation) by the $x$ and $y$ coordinates of two components of \eqref{3.9} respectively. Let us compute those components. The two conditions $gx=x$ and $gy=y$ are equivalent to the condition that $g\in g_{x}H_Xg_x^{-1}\cap g_yH_Yg_y^{-1}$. For such $g$ we have by \eqref{3.9} that $\sign(x,g)=\psi_X((g_x^{-1}g)\cdot \overline{(g_x^{-1}g)}^{-1})$. But $$g\in g_xH_X g_x^{-1}\implies g_x^{-1}g\in H_Xg_x^{-1} \implies \overline{g_x^{-1}g}=g_x^{-1},$$ and consequently $\sign(x,g)=\psi_X(g_x^{-1}gg_x)$. The analogous formula holds for $y$. Thus the condition $\sign(x,g)=\sign(y,g)$ is equivalent to $\psi_X(g_x^{-1}gg_x) = \psi_Y(g_y^{-1}gg_y)$.
     \end{proof}

	 \begin{cor}
		For principal monomial covers, $(x,y)$ is orientable if and only if for all $g\in Stab(x,y)$ $\psi_X(g)=\psi_Y(g)$.
	 \end{cor}
 
     \begin{ex}\label{159}
     	Let $G=A_6$, the alternating group on the set $S=\{1,2,3,4,5,6\}$. Consider the transitive $G$-set 
     	$$X=Y= \{ K\subset S\ | \ |K|=2\}.$$ The cardinality of $X$ is $15$. Let $x_0=\{1,2\}$, whose stabilizer $H_X$ is a group of order $24$. Actually, $H_X$ is the subgroup of $A_6$ of all permutations that preserve the partition $S=\{1,2\}\cup \{3,4,5,6\}$. By restricting to $\{3,4,5,6\}$ we see that $H_X\cong S_4$. There is a split exact sequence
     	$$\begin{tikzcd}
     	1\ar[r] & A_4\ar[r] & H_X\ar{r}{\psi_X} & \mu_2 \ar[r] & 1
     	\end{tikzcd},$$
     	where $A_4$ is interpreted as the subgroup fixing $1$ and $2$, and $\psi_X:H_X\to \mu_2$ is the map given by $\psi_X(\sigma)=sgn(\sigma|_{\{3,4,5,6\}})$ ($sgn$:=permutation sign). We may use the pair $(\psi_X,\psi_X)$ to construct a monomial cover  $A_6\to Mon(X,\mu_2)\times Mon(X,\mu_2)$. We shall now restrict our attention to orientability w.r.t. this monomial cover.\\
     	
     	There are three orbits: the diagonal $O_1=\{(x,x)|x\in X\}$,
     	$O_2=\{(x,y)| |x\cap y|=1\}$, and $O_3=\{(x,y)| \ x\cap y=\varnothing\}.$ The diagonal $O_1$ is obviously orientable, by Proposition \ref{nc}. Let us show that $O_2$ is orientable and $O_3$ isn't. Take the pair $x=\{1,2\}$ and $y=\{3,4\}$. Then $g\in g_xH_Xg_x^{-1}\cap g_yH_Yg_y^{-1}$ is equivalent to $gx=x$ and $gy=y$, which is equivalent to $g$ preserving the partition $S=\{1,2\}\cup\{3,4\}\cup \{5,6\}$. We may take $g_x=1$ and $g_y=(1,3)(2,4)$. Thus $\psi_Y(g_y^{-1}gg_y)=sgn(g|_{\{1,2,5,6\}})$. This does not agree with $\psi_X(g_x^{-1}gg_x)=\psi_X(g)$, e.g. on $g=(1,2)(5,6)$. Thus $O_3$ is not orientable.\\
     	
     	As for $O_2$, we may take the pair $x=\{1,2\}$ and $y=\{1,3\}$. The condition $gx=x$ and $gy=y$ is now equivalent to $g$ preserving the partition $S=\{1\}\cup \{2\}\cup \{3\}\cup\{4,5,6\}$. Take $g_x=1$ and $g_y=(2,3)(5,6)$. Then $\psi_Y(g_y^{-1}gg_y)=sgn(g|_{\{2,4,5,6\}})=sgn(g|_{\{4,5,6\}})=\psi_X(g_x^{-1}gg_x)$ and $O_2$ is orientable.\\
     	
     	The \emph{weight} of $O_2$ (=number of nonzero entries in each row or column of the characteristic matrix) is $8$. $O_1$ has weight $1$. Later on we will use this monomial cover to construct a  $W(15,9)$.

		\begin{rem}
			This construction extends to $S_6$. More precisely, $S_6$ still acts on $X$, and produces the same orbits $O_1,O_2,O_3$ on $X\times X$. The stabilizer $H_X\cong S_2\times S_4$ and there are four characters on $H_X$: The sign at $S_4$, the sign at $S_2$, the total sign and the trivial character. In the first two cases we get the previous orientability results. In the last two cases (which are principal monomial covers), all orbits are orientable.
		\end{rem}
     \end{ex}

	 \subsection{Putting all information together}
	We collect all of the analysis above, mainly Theorems \ref{cor:35} and \ref{pairA}, Proposition \ref{nc}, and equations \eqref{moncov-homology} and \eqref{cov} to form a practical algorithm. The input is an integer $n$, a finite group $G$, and two transitive $G$-sets $X,Y$. The output is a list of all $H^1$-developed $X\times Y$ with values in $\mu_n^+$ over this data, containing an exhaustive list of all CDMs up to diagonal equivalece (but the list may contain more that one element in a class, see Theorem \ref{nuniq}).\\
	
	\begin{alg}
		\begin{itemize}
			\item[] {\Large Subroutine MONCOV($g,\psi_X,\psi_Y$)} 
			\item[] Compute the $H^1$-developed cover $s:G\to Mon(X,\mu_n)\times Mon(Y,\mu_n)$ from two homomorphisms $\psi_X$ and $\psi_Y$.\\
			\begin{algorithmic}[1]
				\State Input global variables $G,X,Y,x_0,y_0,H_X,H_Y$
				\State Input $g\in G$ and homomorphisms $\psi_X:H_X\to \mu_n$ and $\psi_Y:H_Y\to \mu_n$.
				\State Compute the 1-cocycle value $z(g)\in D_X\times D_Y$ using Formula \eqref{3.9}.
				\State Let $(P,Q)\gets z(g)s_0(g)$
				\State \textbf{return} $(P,Q)$.
			\end{algorithmic}
		\end{itemize}
	\end{alg}
    \vskip 1cm

	\begin{alg}\label{alg}
		\begin{itemize}
			\item[] {\Large MAIN($G,X,Y,n$) Algorithm}
		\end{itemize}
	\begin{algorithmic}[1]
		\State Input: $G$, $X,Y$ and $n$.
		\State Initialize Collection $\mathcal A = \varnothing$.
		\State Compute all $G$-orbits on $X\times Y$.
		\State Choose basepoints $x_0\in X$ and $y_0\in Y$. 
		\State Compute the stabilizers $H_X,H_Y\subseteq G$.
		\State Compute elements $g_x$ and $g_y$ for all $x\in X$ and $y\in Y$ such that $x=g_x(x_0)$ and $y=g_y(y_0)$.
		\State Store globally $G,X,Y,\{g_x\},\{g_y\},x_0,y_0,H_X,H_Y$.
		\For {homomorphisms $\psi_X:H_X\to \mu_n$ and $\psi_Y:H_Y\to \mu_n$}
		\State \label{l9} Determine the set $OrinetOrbits$ of orientable orbits by using Proposition \ref{nc}.
		\State Initialize $A=0$.
		\For {$Z \in (\mu_n^+)^{\mathrm{OrientOrbits}}$}
		\For {$O\in$ OrientOrbits}

				\State Choose a point $(x_O,y_O)\in O$ (an \emph{orbit-head})
				\State Set up $\zeta\gets Z_O$.

				\For {$g\in G$}\label{20}
				    \State compute the pair $(P,Q)=\mathrm{MONCOV}(g,\psi_X,\psi_Y)$
				    \State Update $A_{gx_O,gy_O}\gets \zeta \cdot\sign(gx_O,P) \cdot \sign(gy_O,Q)^*$.
			\EndFor
			\EndFor
			
			\State Add $A$ to $\mathcal A$.
			\EndFor

		\EndFor
		\State \textbf{return} $\mathcal A$.
		
	\end{algorithmic}
    \end{alg}

    Here is a short outline of the main algorithm. In steps 1-7 we pre-compute the group-theoretic data to be used later and store it in memory. In step 8 we start looping over all pairs of homomorphisms $(\psi_X,\psi_Y)$, which is the data needed to obtain all monomial covers up to diagonal conjugacy (Theorem \ref{cover/conj}). Then step 9 singles out the orientable orbits, according to Proposition \ref{nc}. In steps 11-14 we loop over all orientable orbits $O$, and choose a value  $\zeta\in \mu_n^+$ at a certain basepoint $(x_O,y_O)\in O$. In steps 15-18 we loop over all elements $g\in G$, and compute the matrix entry $A_{gx_O,gy_O}$ from the value $A_{x_O,y_O}=\zeta$, and the $\widetilde{G}$-action. This computes a $\widetilde{G}$-invariant matrix $A$ for the specific monomial cover, and adds it to the memory.\\

	\begin{ex}\label{159color}
		Continuing Example \ref{159}, let us implement some parts of the algorithm.  First, we must compute representatives $\{g_x\}$ and $\{g_y\}$. We have $X=Y$ and $x_0=y_0=\{1,2\}$. Let $x=\{p,q\}$, for $p<q$. We need to obtain an element $g_x$ such that $g_x\{1,2\}=\{p,q\}$. Let
		$$g_x=\begin{cases}
		1_G & \text{if } p=1,q=2\\
		(1,2,q) & \text{if } p=2<q\\
		(1,q,2) & \text{if } p=1<2<q\\
		(1,p)(2,q) & \text{if } 2<p<q
		\end{cases}.$$
		We make the same choice for $Y$. Consider the orbits $O=O_2$, which is orientable, and take the basepoint $(x_O,y_O)=(\{1,2\},\{2,3\})$. Fix the value $A_{\{1,2\},\{2,3\}}=\zeta=1$. We wish to compute the value of $A_{\{1,3\},\{3,6\}}$ given that $A$ is $\widetilde{G}$-invariant w.r.t. the monomial cover defined by $(\psi_X,\psi_X)$ of Example \ref{159}.\\
		
		Take $g=(2,3,6)$, which satisfies $g\{1,2\}=\{1,3\}$ and $g\{2,3\}=\{3,6\}$. It's time to use formula \eqref{3.9} to compute MONCOV($g,\psi_X,\psi_X$) at the point $x=\{1,3\}$ and $y=\{3,6\}$. We have $g_x=(1,3,2)$, $g_y=(1,3)(2,6)$ and
		\begin{align*}
		g_x^{-1}g&=(1,2,3)(2,3,6)=(1,2)(3,6)\\
		g_y^{-1}g&=(1,3)(2,6)(2,3,6)=(1,3,2).
		\end{align*}
		Next we need to compute $\overline{g_x^{-1}g}$ and $\overline{g_y^{-1}g}$. We have (Corollary \ref{cor:35}) $\overline{g_x^{-1}g}=g_z^{-1}$ for $z=(g_x^{-1}g)^{-1}\{1,2\}=\{1,2\}$. By definition, $g_z=1_G\implies \overline{g_x^{-1}g}=1_G$. Likewise $\overline{g_y^{-1}g}=g_u^{-1}$ for $u=(g_y^{-1}g)^{-1}\{1,2\}=\{2,3\}$. By definition, $g_u=(1,2,3)\implies \overline{g_y^{-1}g}=(1,3,2)$. Putting this in Formula \eqref{3.9} we obtain $(P,Q)=\mathrm{MONCOV}(g,\psi_X,\psi_X)$ and we compute
		\begin{align*}
		\sign(x,P)&=\psi_X((g_x^{-1}g)\cdot(\overline{g_x^{-1}g})^{-1})=\psi_X((1,2)(3,6))=-1\\
		\sign(y,Q)&=\psi_X((g_y^{-1}g)\cdot(\overline{g_y^{-1}g})^{-1})=\psi_X(1_G)=1.
		\end{align*}
		In line 17 of the main algorithm we conclude that $A_{\{2,3\},\{3,6\}}=-1$.
	\end{ex}

	\subsection{The group structure of $H^0$ and $H^1$-developed matrices} 
	 We have defined in \S 2 the filtration groups of CDMs $\mh^i(G,\mathcal O)$, $i=0,1,2$. We will show now that the group structure of $\mh^i(G,\mathcal O)$ is closely related to the cohomology groups $H^i(G,-)$, at least for $i=0,1$. This strengthens the relationship between cohomology development and group cohomlogy.\\ 
	 
	The \emph{diagonal map}, $\Delta: \mu_n \to \mu_n[X]\oplus \mu_n[Y]$ sending $\zeta\in \mu_n$ to $(\sum_{x\in X}\zeta[x],\sum_{y\in Y}\zeta[y])$, is a map of $G$-modules. By covariant functoriality, $\Delta$ induces a map on cohomology groups: $\Delta_*:H^1(G,\mu_n)\to H^1(G,\mu_n[X]\oplus\mu_n[Y])$. The \emph{$0$th Cohomology Group} of a module $M$ is defined as $H^0(G,M)=M^G$, the subgroup of $G$-invariant elements. 
	We have
	
	\begin{thm}\label{nuniq}
		\begin{itemize} For an irreducible $\mathcal O$,
			\item[]
			\item[(a)] There is an injection of groups
			\be\label{H0} H^0(G,\mu_n[\mathcal O])\hookrightarrow \mh^0(G,\mathcal O),\ee and the right hand side equals the $D$-equivalence closure of the image of the left hand side. 
			\item[(b)] 	There is an injection of groups
			\be\label{inj1}\mh^1(G,\mathcal O)/\mh^0(G,\mathcal O) \hookrightarrow H^1(G,\mu_n[X]\oplus \mu_n[Y])/\Delta_*H^1(G,\mu_n).\ee
			and the image is generated by classes in $H^1(G,\mu_n[X]\oplus \mu_n[Y])$ that yield  monomial covers for which all orbits in $\mathcal O$ are orientable.
		\end{itemize}

	\end{thm}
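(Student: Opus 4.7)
For part (a), I would define the natural map $H^0(G, \mu_n[\mathcal O]) \hookrightarrow \mathbf h^0(G, \mathcal O)$ by sending a $G$-invariant function $\alpha \in \mu_n[\mathcal O]^G$ (which is, by definition, constant on each $G$-orbit in $\mathcal O$) to the matrix $A^\alpha$ with $A^\alpha_{x,y} = \alpha(x,y)$ on $\mathcal O$ and zero elsewhere. The $G$-invariance of $\alpha$ makes $A^\alpha$ strongly $G$-invariant, so by Lemma \ref{without cover} it solves the ALP with $L(g)=R(g)=I$ and hence lies in $\mathbf h$. The key step is to verify $A^\alpha \sim_D |A|$: one seeks diagonal $L, R$ with $\ell_x\bar r_y = \alpha(x,y)$ for all $(x,y) \in \mathcal O$, which can be solved by exploiting the connectedness of the bipartite graph $\mathcal G(|A|)$ (Lemma \ref{irred}). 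Picking a spanning tree, fixing $\ell_{x_0}=1$ at a basepoint, and propagating along edges yields values at every vertex; the $G$-invariance of $\alpha$ together with irreducibility ensures that the constraints coming from closing cycles are consistently satisfied. Injectivity is immediate since $\alpha$ is recovered by reading the entries of $A^\alpha$, and the $D$-equivalence closure equals $\mathbf h^0$ because the latter is a single $D$-orbit of $|A|$ (irreducibility makes the stabilizer of $|A|$ equal to $Triv$) and $|A|=A^1$ itself lies in the image.

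For part (b), I would define the map $\mathbf h^1(G, \mathcal O)/\mathbf h^0(G, \mathcal O) \to H^1(G, \mu_n[X]\oplus\mu_n[Y])/\Delta_*H^1(G, \mu_n)$ as follows: given $A \in \mathbf h^1$, choose any monomial cover $s: G \to Mon(X,\mu_n)\times Mon(Y,\mu_n)$ of $A$ with $\widetilde G = G$, and form the 1-cocycle $z_A(g) := s(g)\,s_0(g)^{-1}$, where $s_0$ is the unsigned cover. Under the identification $D_X\oplus D_Y = \mu_n[X]\oplus\mu_n[Y]$ this lies in $Z^1(G, \mu_n[X]\oplus\mu_n[Y])$. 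By irreducibility together with Lemma \ref{irred}, the cover $s$ is determined by $A$ up to multiplication by a homomorphism $G \to Triv$, which (since $Triv$ is identified with the diagonal copy of $\mu_n$) shifts $[z_A]$ by an element of $\Delta_* H^1(G, \mu_n)$; hence the class in $H^1/\Delta_*$ is well defined. That it is a group homomorphism follows because Hadamard multiplication of matrices translates to addition of cocycles, as shown in the proof of Proposition \ref{210}(ii).

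Next, I would check the map factors through $\mathbf h^0$: if $A = L|A|R^*$, the construction in the proof of Proposition \ref{210}(i) provides the cover $s(g) = (LD(g)L^{-1}, RE(g)R^{-1})$, whose cocycle $z_A(g) = (L\,g(L)^{-1}, R\,g(R)^{-1})$ is manifestly a coboundary, giving $[z_A] = 0$. The main obstacle will be the reverse implication (injectivity). Suppose $[z_A]=0$ in $H^1/\Delta_*$; after adjusting $s$ by an element of $\mathrm{Hom}(G,\mu_n)$, we may assume $z_A$ is a strict coboundary $z_A(g) = (L\,g(L)^{-1}, R\,g(R)^{-1})$. Unwinding the invariance relation $\pi_X(s(g))\,A\,\pi_Y(s(g))^* = A$ using the identity $D(g)|A|E(g)^{-1} = |A|$ forces $B := L^{-1} A R$ to satisfy $B_{g^{-1}x,g^{-1}y} = B_{x,y}$, i.e., $B$ is strongly $G$-invariant. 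Applying part (a) with $\alpha = B$ then places $B$ in $\mathbf h^0$, whence $A \sim_D B \in \mathbf h^0$.

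Finally, the description of the image as generated by orientable classes follows from Proposition \ref{nc}: a given cohomology class yields an honest $H^1$-developed matrix with $\mathrm{supp}(A) = \mathcal O$ only when every orbit in $\mathcal O$ is orientable for the associated cover, otherwise non-orientable orbits are forced by the algorithm to take zero values, shrinking the support below $\mathcal O$. Since the map we constructed is a group homomorphism, its image is the subgroup of $H^1(G,\mu_n[X]\oplus\mu_n[Y])/\Delta_*H^1(G,\mu_n)$ generated by those orientable classes.
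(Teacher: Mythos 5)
Your overall plan mirrors the paper: in (a) you identify $G$-invariant elements of $\mu_n[\mathcal O]$ with $G$-invariant matrices, and in (b) you attach to $A$ the $1$-cocycle obtained from any cover with $\widetilde G=G$, use irreducibility to show the resulting class is well-defined modulo $\Delta_*H^1(G,\mu_n)$, compute the kernel, and characterize the image by orientability. That is the same route the paper takes. However, there is a genuine gap in your (a), and it propagates into the closing step of your (b).

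You assert that every $G$-invariant matrix $A^\alpha$ with $\supp A^\alpha=\mathcal O$ satisfies $A^\alpha\sim_D|A|$, claiming that "the $G$-invariance of $\alpha$ together with irreducibility ensures that the constraints coming from closing cycles are consistently satisfied." This is not justified and is in fact false in general. Writing $\alpha(x,y)=\ell_x\bar r_y$ is equivalent to the vanishing of the alternating product of $\alpha$ around every cycle of the bipartite graph $\mathcal G(|A|)$, and $G$-invariance of $\alpha$ (constancy on $G$-orbits) is an independent condition. For instance, take $G=\ZZ/4$, $X=Y=G$ with the left-translation action, $\mathcal O=X\times Y$ (so $|A|=J_4$, irreducible, $G$ transitive on $X$ and $Y$), and $\alpha(x,y)=-1$ exactly when $y-x=1$. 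Then $\alpha$ is $G$-invariant, but the $4$-cycle through $(0,0),(1,0),(1,1),(0,1)$ has alternating product $\alpha(0,0)\alpha(1,0)^{-1}\alpha(1,1)\alpha(0,1)^{-1}=-1\neq 1$, so $A^\alpha$ is not $D$-equivalent to $J_4$. Your spanning-tree propagation therefore cannot be made consistent.

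The same issue reappears at the end of your injectivity argument for (b), where you produce a $G$-invariant $B$ with $A\sim_D B$ and invoke (a) to place $B$ in $\mh^0$. The interpretation that makes Theorem \ref{nuniq} correct — and the one actually used elsewhere (cf. Remark \ref{rem:1}(1) and the proof of Theorem \ref{thm:710}) — is that $\mh^0(G,\mathcal O)$ consists of matrices $D$-equivalent to \emph{some} $G$-invariant matrix, not necessarily to $|A|$ itself. Under that reading your spanning-tree subargument becomes superfluous: part (a) is immediate, and your step in (b) ("$B$ is $G$-invariant, hence $B\in\mh^0$, hence $A\in\mh^0$") closes directly. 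As written, however, you are attempting to prove a stronger statement whose key step fails; the cycle-consistency claim should be removed rather than repaired.
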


	\begin{proof}
		In this proof, we will identify a pair of 1-cocycles, $(c_X,c_Y)$, for $c_X\in Z^1(G,\mu_n[X])$ and $c_Y\in Z^1(G,\mu_n[Y])$, with the $1$-cocycle $c\in Z^1(G,\mu_n[X]\oplus\mu_n[Y])$ defined by $c(g)=(c_X(g),c_Y(g)))$.\\
		
		(a) $H^0(G,\mu_n[\mathcal O])=\mu_n[\mathcal O]^G$, and every $G$-invariant element\\  $t=\sum_{(x,y)\in \mathcal O} \zeta(x,y)[(x,y)]\in \mu_n[\mathcal O]$ is being identified with the $G$-invariant matrix $A(t)_{x,y}=\zeta(x,y)$. The association $t\mapsto A(t)$ gives the map in \eqref{H0}, which is clearly injective. Notice that this map is a group homomorphism (multiplication of matrices being Hadamard multiplication). The second assertion in (a) follows from the definition of $\mh^0(G,\mathcal{O}).$\\

		 (b) The set $\mh^1(G,\mathcal O)$ is the set of all matrices $A\in M\mu_n(X,Y)$ with support $\mathcal O$ such that there exists a pair of 1-cocycles $c_X\in Z^1(G,D_X)$ and $c_Y\in Z^1(G,D_Y)$ with $c_X(g)(gA)c_Y(g)^*=A$ for all $g\in G$. As usual we identify $D_X=\mu_n[X]$ and $D_Y=\mu_n[Y]$. We define the map in \eqref{inj1} by sending $A$ to the cohomology class of the pair $(c_X,c_Y)$ modulo $\Delta_*H^1(G,\mu_n)$. We need to show that this map is well-defined.\\
		 
		  The pair $(c_X,c_Y)$ may not be unique. If $(c'_X,c'_Y)$ is another pair for $A$, then the irreducibility of $\mathcal O$ (Lemma \ref{irred}(c)) assures that for all $g\in G$,   $c_X(g)c'_X(g)^{-1}$ and $c_Y(g)c'_Y(g)^{-1}$ are scalar matrices with the same scalar value. In particular $c_X-c_{X}'=c_Y-c_Y'$ is a constant function (passing to additive notation). It follows that the pair of 1-cocycles $(c_X-c'_X,c_Y-c'_Y)$  is in the image of $\Delta_*Z^1(G,\mu_n)$ (where the map $\Delta_*$ has been originally defined at the level of cocycles) and $A\mapsto (c_X,c_Y)$ is a well defined map: 
		 $$ \mh^1(G,\mathcal O) \to Z^1(G,\mu_n[X]\oplus \mu_n[Y])/\Delta_*Z^1(G,\mu_n).$$  It is easy to check that this map is a group homomorphism. This map descends to cohomology, giving a map
		 
		 \be \label{hom2}
		  \mh^1(G,\mathcal O) \to H^1(G,\mu_n[X]\oplus \mu_n[Y])/\Delta_*H^1(G,\mu_n).\ee
		 
		 Let us compute the kernel. If a matrix $A\in \mh^1(G,\mathcal O)$ is in the kernel, it means that the pair $(c_X,c_Y)$ is cohomologous to some $\Delta_*\lambda$ where $\lambda:G\to \mu_n$ is a $1$-cocycle. By Theorem \ref{cover/conj},  cohomologous 1-cocycles give rise to
		 $D_X\times D_Y$-conjugate monomial covers. Thus we may generate another matrix $A'\in \mh^1(G,\mathcal O)$, with $A'\sim_D A$, and the associated cocycle becomes $(c'_X,c'_Y)=\Delta_*(\lambda)$. But this implies that $A'$ is $G$-invariant, since the monomial cover $(P,Q)$ satisfies
		 $P=\xi |P|$ and $Q=\xi |Q|$ for some $\xi=\xi(g)\in \mu_n$. This implies that $A\in \mh^0(G,\mathcal O)$. Conversely, if $A\in \mh^0(G,\mathcal O)$, then $A$ is $D$-equivalent to a $G$-invariant matrix $A'$. For $A'$ we may assign the unsigned monomial cover, which is associated to the zero 1-cocycle. Thus for $A$ we can assign a $D_X\times D_Y$-conjugate of the unsigned monomial cover, and in turn this yields a $1$-cocycle  $(c_X,c_Y)$ which is a cohomologous to $0$, hence $A$ is in the kernel. We have proved that the kernel of \eqref{hom2} is $\mh^0(G,\mathcal O)$ and that \eqref{inj1} is well-defined and injective.\\
		 
		 It remains to describe the image of \eqref{inj1}. The pair of cocycles $(c_X,c_Y)$ used in the construction of this map  gives rise to a monomial cover $(P,Q):G \to Mon(X,\mu_n)\times Mon(Y,\mu_n)$ defined by $P(g)=c_X(g)|P(g)|$ and $Q(g)=c_Y(g)|Q(g)|$ (Again using the identification $\mu_n[S]=D_S$). If $(c_X,C_Y)$ is in the image, then it comes from a matrix $A$ with support $\mathcal O$ satisfying $A=c_X(g)(gA)c_Y(g)^*=P(g)AQ(g)^*$. This implies in turn that all orbits in $\mathcal O$ are orientable for this action.  Conversely, if an element of the RHS yields a monomial cover such that all orbits in $\mathcal O$ are orientable, then it is possible as per Algorithm \ref{alg} to generate a matrix $A\in \mh^1(G,\mathcal O)$ whose image is $(c_X,c_Y)$.
	\end{proof}

	\begin{rem}\label{rem:1} Assume that $\mathcal O$ is irreducible.
		\begin{enumerate}
			\item Dividing by $\mh^0(G,\mathcal O)$ in the LHS of \eqref{inj1} has a twofold meaning. When we divide by the subgroup of $G$-invariant matrices 		
			we account for that different possible choices of the values values $z_O$ at the orbit-heads $(x_O,y_O)$ as in the notation of Algorithm \ref{alg}, line 14.
			Secondly, we also divide by diagonal equivalences as they appear in $\mh^0(G,\mathcal O)$. This choice allows us to discuss cohomology groups (appearing in the RHS), rather than cocycle groups.
			\item It follows from (1) that different classes in $\mh^1(G,\mathcal O)/\mh^0(G,\mathcal O)$ are diagonally inequivalent. cf. Theorem \ref{class_sect}.
		\end{enumerate}
	\end{rem}

   \subsection{$H^1$-Development and Hadamard Equivalence}
   We know (Remark \ref{rem:1}(2)) that elements of $\mh^1(G,\mathcal O)$ that correspond to different classes in $H^1(G,\mu_n[X]\oplus \mu_n[Y])/\Delta_*H^1(G,\mu_n)$ must be diagonally inequivalent. We would like sometimes to know if they are also Hadamard inequivalent. This is not true in general and we will see an example in Remark \ref{HadProj} below. The following theorem gives a sufficient condition for Hadamard inequivalence.  We use the notation for the maps $\pi,s$ and $\beta$ as they appear in Diagram \eqref{diag2}. Let $\langle S \rangle$ denote the subgroup generated by a subset $S$ of a given group. Let $x^g$ denote the conjugation $gxg^{-1}$, and $S^g:=\{s^g\ | \ s\in S\}$.
   
   \begin{thm}\label{HadConj}
   	   Let $\mathcal O$ be irreducible, and $|A|$ be its characteristic matrix. Suppose that every $\xi\in \PermAut(|A|)$ satisfies $\xi\in \langle \beta(G)^\xi\cup \beta(G) \rangle $. Then different elements in $\mh^1(G,\mathcal O)/\mh^0(G,\mathcal O)$ are Hadamard inequivalent.
   \end{thm}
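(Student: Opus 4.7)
The plan is to show that Hadamard equivalence between two matrices in $\mh^1(G,\mathcal{O})$ already forces diagonal equivalence; by Remark \ref{rem:1}(2), two $D$-equivalent elements of $\mh^1(G,\mathcal O)$ must project to the same class in $\mh^1(G,\mathcal O)/\mh^0(G,\mathcal O)$, which is exactly the Hadamard-inequivalence statement we want.

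So suppose $A_1,A_2\in \mh^1(G,\mathcal O)$ with $A_1 = L A_2 R^*$. Decompose $L = D_L P_L$ and $R = D_R P_R$ into their diagonal and permutation parts. Since $|A_1|=|A_2|=|A|$, the pair $\xi:=(P_L,P_R)$ lies in $\PermAut(|A|)$, and from the decomposition we directly read off $A_1\sim_D \xi(A_2)$. Thus it is enough to prove $\xi(A_2)\sim_D A_2$, which will follow the moment I can lift $\xi$ to a monomial element of the stabilizer of $A_2$.

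The heart of the argument is a book-keeping of the stabilizer groups $H_i:=\{M\in Mon(X,\mu_n)\times Mon(Y,\mu_n)\,:\,MA_i=A_i\}$. The identity $A_1=LA_2R^*$ shows that conjugation by $(L,R)$ carries $H_1$ isomorphically onto $H_2$, and at the permutation level this induces the relation $|H_1|=\xi |H_2|\xi^{-1}=|H_2|^{\xi}$ (I will need to be careful with the convention $x^g=gxg^{-1}$ when computing $|L^{-1}M_XL|=P_L^{-1}|M_X|P_L$ and matching signs). Because each $A_i$ is $H^1$-developed, the cover $s_i:G\to H_i$ satisfies $|s_i|=\beta$, and so $\beta(G)\subseteq |H_i|$ for $i=1,2$. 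Combining this with $|H_1|=|H_2|^{\xi}$ gives $\beta(G)^{\xi}\subseteq |H_1|$, and together with $\beta(G)\subseteq |H_1|$ we obtain $\langle \beta(G)\cup \beta(G)^{\xi}\rangle\subseteq |H_1|$. The hypothesis of the theorem now delivers $\xi\in |H_1|$; by the same reasoning applied to $A_2$ (or by observing that $|H_2|=\xi^{-1}|H_1|\xi$ and $\xi^{-1}\xi\xi=\xi$), we also get $\xi\in |H_2|$.

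Pick $\tilde\xi\in H_2$ with $|\tilde\xi|=\xi$; then $\tilde\xi=(\tilde D_XP_L,\tilde D_Y P_R)$ and $\tilde\xi\cdot A_2=A_2$ reads $\tilde D_X P_L A_2 P_R^*\tilde D_Y^* = A_2$, which is exactly the statement $\xi(A_2)\sim_D A_2$. Combined with $A_1\sim_D \xi(A_2)$, this gives $A_1\sim_D A_2$, and Remark \ref{rem:1}(2) finishes the proof. The main obstacle I expect is making sure the direction of conjugation in Step~2 ($|H_1|=|H_2|^{\xi}$ versus $|H_2|^{\xi^{-1}}$) is set up so that the hypothesis, which involves $\beta(G)^{\xi}$ with $x^g=gxg^{-1}$, plugs in on the correct side; getting this match right is what makes the inclusion $\beta(G)^{\xi}\subseteq |H_1|$ rather than some useless version.
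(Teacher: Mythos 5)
Your proof is correct and is essentially the paper's argument in a slightly repackaged form: where you work with the full monomial stabilizers $H_i$ and the relation $|H_1|=\xi|H_2|\xi^{-1}$, the paper forms the subgroup $\Gamma=\langle s'(G)^{(P,Q)}\cup s(G)\rangle\subseteq\Aut(A)\widetilde{\ }$ (a subgroup of your $H_2$) and computes $\pi(\Gamma)=\langle\beta(G)^{\xi}\cup\beta(G)\rangle$, then invokes the hypothesis to lift $\xi$ to a monomial automorphism. The direction worry you flag is real (both you and the paper are loose about whether the conjugating element is $(L,R)$ or its inverse), but since the hypothesis is assumed for every $\xi\in\PermAut(|A|)$ and this set is closed under inversion, the conclusion goes through either way.
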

   Note that the assumption in the theorem is satisfied in the case that $\beta(G)=\PermAut(|A|)$.
   \begin{proof}
   	   Suppose that $A,A'\in \mh^1(G,\mathcal O)$ are Hadamard equivalent. Then $A'=PAQ^*$ for monomial $P,Q$. Taking absolute values we have $|A|=|P||A||Q|^*$, and $\xi:=(|P|,|Q|)\in \PermAut(|A|)$. Let $s,s':G\to Mon(X,\mu_n)\times Mon(Y,\mu_n)$ be monomial covers corresponding to $A,A'$ respectively, and $\beta=\pi\circ s=\pi\circ s'$. Write $s=(s_X,s_Y)$ and $s'=(s'_X,s'_Y)$. Then
   	   $$A'=PAQ^* \implies \forall g\in G,\ \ (P^{-1}s'_X(g)P,Q^{-1}s'_Y(g)Q)\in \Aut(A).$$
   	   
   	   Let $\Gamma=\langle s'(G)^{(P,Q)}\cup s(G)\rangle.$ Then $\Gamma\subseteq \Aut(A)$, and $\pi(\Gamma)=\langle \beta(G)^\xi\cup \beta(G)\rangle.$ By our assumption, $\xi=(|P|,|Q|)\in \pi(\Gamma)$, so there exists some pair $(P',Q')\in \Gamma$ above $\xi$. In particular $P=LP'$ and $Q=RQ'$ for \emph{diagonal} $L,R$. But since $(P',Q')\in \Aut(A)$, then 
   	   $$A'=PAQ^*=L(P'AQ^{'*})R^*=LAR^* \implies \ A'\sim_D A,$$
   	   and in particular $A,A'$ map to the same element in $\mh^1(G,\mathcal{O})/\mh^0(G,\mathcal{O})$.
   \end{proof}

   \section{Example of $H^1$-development - Paley's Conference and Hadamard matrices.} \label{sec:paley}
   The Paley Hadamard and conference matrices were introduced in the 1933 paper \cite{Paley33}. The construction there uses Legendre symbols over prime powers. To date this is the most dense known family of Hadamard matrices. Here we will reconstruct and reinterpret these matrices using our cohomology machinary.  
   \subsection{Paley's Conference matrix} \label{s41}
   Let $F=\FF_q$ be a finite field and take $X=Y=F$ acted upon by the group $G$ of affine transformations:
   \be \nonumber G\ = \ \left\{x\mapsto ax+b\ \big| \ a\in F^\times \text{ and } b\in F   \right\}.\ee
   Take an integer $n|(q-1)$ for $\mu_n$.
   The action of $G$ breaks $X\times Y$ to two orbits: the diagonal
   $O_0=\{(x,x)| x\in F\}$ and the off-diagonal $O_1=\{(x,y)| x,y\in F,\ x\neq y  \}.$
    We choose the basepoints $x_0=y_0=0\in F$.
    The stabilizers of the basepoints are $H_X=H_Y=\{x\mapsto ax \  | \ a\in F^\times\}$. We shall work with the homomorphisms  $\psi_X:H_X\to \mu_n$ and $\psi_Y:H_Y:\to \mu_n$ given by \begin{align*} \psi_X(x\mapsto ax)&=\left(\frac{a}{F}\right)_n\text{  (the $n$th power residue symbol)}\\
    \psi_Y&=1.\end{align*}
    The $n$th power residue symbol is a homomorphism $\left( \frac{\cdot}{F}\right):F^\times \to \mu_n$, sending a generator $\alpha$ of the cyclic group $F^\times$ to $\tau=\exp(2\pi i/n)$. Its definition depends on the choice of the generator $\alpha$, which we fix in advance. For each $t\in F$, let us choose the element $g_t$ as the translation map $x\mapsto x+t$, both for $X$ and $Y$. We wish to check orientability.\\

     The orbit $O_0$ containing $(0,0)$ is nonorientable because $\psi_X$ and $\psi_Y$ do not agree on $H_X=H_X\cap H_Y$. On the other hand, the orbit $O_1$ is orientable, and it suffices to check this at the point $(x,y)=(0,1)$. The stabilizer of $1$ is the group $H_Y'=\{x\mapsto 1+a(x-1)\}=g_1^{-1}H_Yg_1,$ and the intersection $g_0^{-1}H_Xg_0\cap g_1^{-1}H_Yg_1= H_X\cap H_Y'=\{1\}$. Thus both homomorphisms in Proposition \ref{nc}, equation \eqref{310} agree, and $O_1$ is orientable.\\
     
     Let us construct an $H^1$-developed matrix $A$. We put $A_{0,0}=0$ and $A_{0,1}=1$. In step 3 of the subroutine MONCOV, we use formula \eqref{3.9} to compute the $1$-cocycle $z(g)$. Take $g\in G$, $g(x)=ax+b$. We compute simultaneously for $X$ and $Y$:
     
      $$g_t^{-1}g(x)=g_t^{-1}(ax+b)=ax+b-t.$$ Now $\overline{g_t^{-1}g}$ should equal to $g_s^{-1}$ for $s$ satisfying $g_t^{-1}g(s)=x_0=0$. Solving the equation $as+b-t=0$, we obtain $s=-(b-t)/a$. Hence
      \begin{gather*}
     \implies \overline{g_t^{-1}g}(x)=g_s^{-1}(x)=x+(b-t)/a\\
     \implies g_t^{-1}g\overline{g_t^{-1}g}^{-1}(x)=g_t^{-1}g(x-(b-t)/a)=ax.\\
     \implies \text{ (by eq. \eqref{3.9}), }\ \  z(g)=\left(\left(\frac{a}{F}\right)_n\sum_t[t],\sum_t[t] \right)
     \end{gather*}
     Now, for a point $(s,t)\in F^2$ satisfying $s\neq t$, take the group element $g(x)=(t-s)x+s$ which maps $(0,1)$ to $(s,t)$. Then for $(P,Q)=MONCOV(g)$ we have $\sign(s,P)=\left(\frac{t-s}{F}\right)_n$ and $\sign(t,Q)=1$. According to step 17 of the algorithm, the value of our desired matrix $A$ at the $(s,t)$-position is
     
     \be \label{PayKer} A_{s,t}=z(g)_1(s)\cdot z(g)_2(t)^*\cdot A_{0,1}=\left(\frac{t-s}{F}\right)_n.\ee
     For $s=t$ we set $A_{s,t}=0$.\\

	 This yields the Paley's Conference Kernel matrix. The reader should compare this with the bilinear form matrix constructed above in \S\ref{BilMat}. This is essentially the same as the submatrix of $M$ there corresponding to the affine line. There are few differences though. The diagonal here is non orientable, while there it was orientable. Note also that the acting group is different here, as well as the action here which is different for the $X$ and $Y$ axes.\\
	 
	 Let us now check the Gram matrix $AA^*$.
	 \begin{lem}
	 	We have $$AA^*=A^*A=qI-J.$$
	 \end{lem}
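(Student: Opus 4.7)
The plan is to verify the identity by a direct computation of matrix entries, exploiting the multiplicativity of the $n$th power residue symbol and the standard orthogonality of nontrivial characters of $F^\times$.

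First I would handle the diagonal of $AA^*$. For any $s \in F$, the entry $(AA^*)_{s,s} = \sum_{t \in F} |A_{s,t}|^2$. Since $A_{s,s} = 0$ and $|A_{s,t}| = 1$ for $t \neq s$, this equals $q-1$. So the diagonal of $AA^*$ contributes $(q-1)I$ to the asserted expression, which matches the diagonal of $qI - J$.

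Next I would handle the off-diagonal entries. Fix $s \neq s'$ in $F$. Then
\begin{equation*}
(AA^*)_{s,s'} = \sum_{t \neq s,\, t \neq s'} \left(\frac{t-s}{F}\right)_n \left(\frac{t-s'}{F}\right)_n^*.
\end{equation*}
By multiplicativity of the residue symbol (and since $\chi^* = \chi^{-1}$ for a $\mu_n$-valued character), the summand equals $\left(\frac{(t-s)(t-s')^{-1}}{F}\right)_n$. The key observation is that the map $t \mapsto u := (t-s)/(t-s')$ gives a bijection from $F \setminus \{s, s'\}$ onto $F^\times \setminus \{1\}$: indeed, it is a Möbius-type transformation, with $t = s$ giving $u = 0$ being excluded from the domain, and no $t \in F$ hitting $u = 1$. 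Thus
\begin{equation*}
(AA^*)_{s,s'} = \sum_{u \in F^\times \setminus \{1\}} \left(\frac{u}{F}\right)_n = \left(\sum_{u \in F^\times} \left(\frac{u}{F}\right)_n\right) - 1 = 0 - 1 = -1,
\end{equation*}
using that the $n$th power residue symbol is a nontrivial character of $F^\times$ (here $n > 1$ and $n \mid q-1$), hence sums to zero. Combining, $AA^* = (q-1)I - (J - I) = qI - J$.

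For $A^*A$, I would run the analogous calculation; the substitution $u = (s'-t)/(s-t)$ (with roles of rows and columns swapped) again sweeps out $F^\times \setminus \{1\}$ exactly once, and the same character-sum vanishing yields $-1$ off-diagonal and $q-1$ on the diagonal. The only step requiring care is verifying the bijection claim for the substitution; this is routine Möbius-map bookkeeping and is the place where the hypotheses $s \neq s'$ and $F$ being a field enter essentially. No genuine obstacle arises.
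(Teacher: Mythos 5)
Your proof is correct, and it takes a genuinely different route from the paper's. You compute $(AA^*)_{s,s'}$ (and $(A^*A)_{s,s'}$) directly as a character sum, using multiplicativity of $\left(\frac{\cdot}{F}\right)_n$ and the M\"obius substitution $u=(t-s)/(t-s')$ to reduce each off-diagonal entry to the full character sum $\sum_{u\in F^\times}\left(\frac{u}{F}\right)_n$ minus the $u=1$ term; the bijection claim is exactly right (injectivity follows as $s\neq s'$, and $u=1$ is never attained). The paper instead argues structurally: since the affine group acts $2$-transitively on rows and columns and $\widetilde{G}$ acts on columns by \emph{unsigned} permutation matrices, $A^*A$ is a priori of the form $aI+cJ$; then a single row sum $A\mathbf{j}^T=0$ pins down $c=-1$, and $AA^*$ follows from $A^T=\left(\frac{-1}{F}\right)_nA$. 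Your approach is more elementary and self-contained, and makes explicit the hypothesis $n>1$ needed for the nontrivial character sum to vanish (the paper uses this implicitly in the same place). The paper's approach is less computation-heavy and, more importantly, is a specific instance of the general orthogonality mechanism developed later (Theorem 5.2): for CDMs the Gram matrix automatically lands in a small algebra determined by orientable orbits, which is why the authors present it this way rather than by direct character summation.
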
 
 
	 \begin{proof}   
	     The group $G$ acts 2-transitively on the sets of rows and columns, hence acts transitively on pairs of row-column with different index. Hence the Gram matrices $A^*A$ and $AA^*$ are constant up to sign at the off-diagonal entries. Also the action of $G$ on columnsis by unsigned permutation matrices $|Q|=|Q|(g)$ for $g\in G$, hence $A^*A$ is invariant under the conjugation by $|Q|(g)$ for all $g$. By the 2-transitivity of the action, we deduce that $A^*A$ is constant off the diagonal. Let this constant value be equal to $c$. Write $\mathbf j=[1,1,\ldots,1]\in \RR^q$. Setting up $\left(\tfrac{0}{F}\right)_n=0$, 
	     $$(A\mathbf{j}^T)_i=\sum_{a\in F} \left(\frac{a-i}{F}\right)_n=\sum_{a\in F} \left(\frac{a}{F}\right)_n=0.$$
	      We compute the sum of entries of $A^*A$.
	     $$q(q-1)+cq(q-1)=\mathbf jA^*A\mathbf j^T=0,$$
	     hence $c=-1$, proving that $A^*A=qI-J$. Since $A^T=\left(\tfrac{-1}{F}\right)_nA$, then $B=A^T$ satisfies the same identity $B^*B=qI-J$. By taking complex conjugates, we get $AA^*=qI-J$.
	     
	\end{proof}

	\begin{cor}\label{c42}
		For $q$ a prime power and $n|(q-1)$, the matrix $$\begin{bmatrix}
		A & \mathbf j^T \\
		\mathbf j & 0
		\end{bmatrix}$$ 
		is a $GW(q+1,q;n).$
		This is the well-known Paley Conference  matrix.
	\end{cor}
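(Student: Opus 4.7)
The plan is to verify $M M^* = q I_{q+1}$ by a direct block computation, where $M = \begin{bmatrix} A & \mathbf{j}^T \\ \mathbf{j} & 0 \end{bmatrix}$. All the necessary ingredients are already in hand: the preceding lemma gives $AA^* = qI_q - J_q$, and within its proof we saw that $A \mathbf{j}^T = \mathbf{0}$ (the row sums of $A$ vanish because $\sum_{a\in F} \bigl(\tfrac{a}{F}\bigr)_n = 0$).

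First I would note that $M \in G\mu_n(q+1)$, since the entries of $A$ lie in $\mu_n^+$ and the entries of the border $\mathbf{j}$, $\mathbf{j}^T$, and of the corner $0$ lie in $\mu_n^+$ as well. Then I would compute $M^*$ block-wise:
\[
M^* \;=\; \begin{bmatrix} A^* & \mathbf{j}^T \\ \mathbf{j} & 0 \end{bmatrix},
\]
using that $\mathbf{j}$ is real.

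Next I would multiply blockwise:
\[
MM^* \;=\; \begin{bmatrix} AA^* + \mathbf{j}^T \mathbf{j} & A\mathbf{j}^T \\ \mathbf{j}A^* & \mathbf{j}\mathbf{j}^T \end{bmatrix}.
\]
The $(1,1)$-block equals $(qI_q - J_q) + J_q = qI_q$, using $\mathbf{j}^T\mathbf{j}=J_q$ and the lemma. The $(1,2)$-block is $A\mathbf{j}^T = \mathbf{0}$ by the computation recalled above. The $(2,1)$-block is $\mathbf{j}A^* = (A\mathbf{j}^T)^* = \mathbf{0}$. The $(2,2)$-block is the scalar $\mathbf{j}\mathbf{j}^T = q$. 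Assembling these gives $MM^* = qI_{q+1}$, as required.

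There is no real obstacle here; the corollary is genuinely routine once the lemma is established, so the proof amounts to checking four blocks. The only thing to be mildly careful about is the $(2,1)$-block, to observe that $\mathbf{j}A^* = 0$ follows formally from $A\mathbf{j}^T = 0$ by conjugate-transposition, rather than requiring a separate column-sum argument.
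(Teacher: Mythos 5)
Your block computation is correct and is exactly the routine verification the paper intends by stating this as a corollary of the preceding lemma (the paper gives no separate proof). The two ingredients you invoke, $AA^*=qI-J$ and $A\mathbf j^T=0$, are both established in the lemma's proof, and the remaining bookkeeping (including deriving $\mathbf j A^*=0$ by conjugate-transposing $A\mathbf j^T=0$) is handled correctly.
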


    \subsection{Paley's Hadamard matrices}.
    Sometimes we can restore orientability of some $G$-orbit, if we reduce to a subgroup $G_0\subset G$. Hopefully $G_0$ is not too small as compared to $G$. This is the case with the above affine group where we take
    $$G_0=\{x\mapsto ax+b 
     \in G\ | \ \left(\frac{a}{F}\right)_n=1\}.$$
     We have $[G:G_0]=n$, and the orbit $O_1$ splits into $n$ $G_0$-sub orbits. However, the orbit $O_0$ becomes orientable, since our two homomorphisms restricted to $H_X\cap G_0$ and $H_Y\cap G_0$ become trivial. Thus $B=I+A$ is $G_0$ invariant. This is the \emph{Paley Hadamard Kernel matrix}. Suppose now that $n=2$. If $q\equiv 3 \mod 4$, then $A$ is antisymmetric since  $A^T=\left(\frac{-1}{F}\right)_n{A}=-{A}$. Thus $BB^*=I+AA^*=(q+1)I-J$, and $B\mathbf{j}^T=\mathbf{j}^T$. The following matrix 
     $$HP_I=\begin{bmatrix}
     B & \mathbf j^T \\
     \mathbf j & -1
     \end{bmatrix}$$
     is now seen to be Hadamard and is the well-known Paley-Type I Hadamard matrix. This matrix differs from the matrix in Example \S\ref{BilMat} by the negation of the last row.\\
	 
	 The group $G_0$, acts on $(q+1) \times (q+1)$ matrices by embedding $s(G_0)$ as a block in $GL(\mathbb C,q+1)\times GL(\mathbb C,q+1)$, making $HP_I$ invariant under this action. If $q\equiv 1\mod 4$, $HP_I$ is symmetric and $BB^*=(q+1)I-J+B+B^*$. We fix the situation by letting $B'=-I+A$, and define $HP_I'$ similarly with $B'$ instead of $B$. Now it can be checked that $HP_IHP_I^T+(HP'_I)(HP'_I)^T=(2q+2)I$ and that $(HP_I)(HP'_I)^T$ is symmetric. These imply that
     $$HP_{II}=\begin{bmatrix}
     HP_I & HP_I'\\
     -HP_I' & HP_I
     \end{bmatrix}.$$
    is a Hadamard matrix of size $2q+2$ and $G_0$ acts as an automorphism subgroup on this matrix. The full automorphism groups of the Paley matrices of both types were computed in \cite{deLauney:2000}. Notice that for $n>2$ and $q=3\mod 4$ $HP_I$ is no longer Hadamard, but $Re(HP_IHP_I^*)=(q+1)I$. A similar fact holds for $HP_{II}$.
     
     \section{Algebras, Modules and Orientable Orbits}
     
     Cohomology developed matrices admit more algebraic structure. We continue to assume that $X,Y$ are transitive $G$-sets and that $\mathcal O\subset X\times Y$ is irreducible. In this section we discuss Cohomology-Developement in general, and do not restrict ourselves to $H^1$-development. In particular $G$ and $\widetilde{G}$ need not be isomorphic.\\
     
     Let a monomial cover $\pi_X\times \pi_Y:\widetilde{G}\to Mon(X,\mu_n)\times Mon(Y,\mu_n)$  be given. For convenience we shall write $\pi_X(g)=P(g)$ and $\pi_Y(g)=Q(g)$, $P,Q$ are homomorphisms. As usual we let $\widetilde{G}$ act on matrices by $g,A\mapsto P(g)AQ(g)^*$. We shall call this the \emph{the $(P,Q)$-action}. We extend the $(P,Q)$-action to the space $\CC(X,Y)$ of all \emph{Complex} $X\times Y$-matrices.\\
     
     The following result is an easy consequence and can be immediately verified.
     
     \begin{thm}\label{alg1}
     	Let $(P,Q)$ be a monomial cover of $G$.
     	\begin{itemize}
     		\item[(a)] The subspace $\mathcal A_P(X)\subset \CC(X,X)$ of $G$-invariant matrices under the $(P,P)$-action is a matrix algebra, containing the identity and closed under the conjugate-transpose.
     		\item[(b)] The subspace $\mathcal A_{P,Q}(X,Y)\subset \CC(X,Y)$ of $G$-invariant matrices under the $(P,Q)$-action is a left module over the algebra $\mathcal A_P(X)$, and a right module over the algebra $\mathcal A_Q(Y)$, with respect to matrix multiplication.
     		\item[(c)] We have 
     		$$\mathcal A_{P,Q}(X,Y)\mathcal A_{Q,R}(Y,Z)\subseteq A_{P,R}(X,Z).$$
     		The product of spaces here means the $\CC$-space generated by all products.
     		\item[(d)] Closure under the conjugate-transpose:
     		$$\mathcal A_{P,Q}(X,Y)^*=\mathcal A_{Q,P}(Y,X).$$
     		
     	\end{itemize}
     \end{thm}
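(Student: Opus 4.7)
The theorem is a direct verification, and the single fact that makes everything work is that any monomial $\mu_n^+$-matrix $M$ is unitary, i.e.\ $MM^*=M^*M=I$. So before tackling any of the four parts, I would record this observation once: for every $g\in\widetilde{G}$, both $P(g)$ and $Q(g)$ satisfy $P(g)P(g)^*=I_X$ and $Q(g)Q(g)^*=I_Y$. All four parts then reduce to inserting a factor $P(g)^*P(g)$ (or $Q(g)^*Q(g)$) between matrices inside a $G$-invariance relation.

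For part (a), the plan is to check the three algebra axioms separately. Closure under sum is immediate from linearity of the $(P,P)$-action. For closure under product, given $M,N\in \mathcal A_P(X)$, one computes
\[
P(g)\,MN\,P(g)^{*}=P(g)M\bigl(P(g)^{*}P(g)\bigr)NP(g)^{*}=\bigl(P(g)MP(g)^{*}\bigr)\bigl(P(g)NP(g)^{*}\bigr)=MN.
\]
The identity lies in $\mathcal A_P(X)$ because $P(g)I_X P(g)^{*}=P(g)P(g)^{*}=I_X$. Closure under conjugate-transpose follows by applying $*$ to the equation $P(g)MP(g)^{*}=M$, which yields $P(g)M^{*}P(g)^{*}=M^{*}$.

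For part (b) I would do the same trick twice. If $M\in\mathcal A_P(X)$ and $N\in\mathcal A_{P,Q}(X,Y)$, then
\[
P(g)\,MN\,Q(g)^{*}=P(g)M\bigl(P(g)^{*}P(g)\bigr)NQ(g)^{*}=M\cdot N,
\]
so $MN\in \mathcal A_{P,Q}(X,Y)$. The right-module structure over $\mathcal A_Q(Y)$ is symmetric. Part (c) is literally the same calculation with $P(g)^{*}P(g)$ replaced by $Q(g)^{*}Q(g)$ inserted between a matrix in $\mathcal A_{P,Q}(X,Y)$ and one in $\mathcal A_{Q,R}(Y,Z)$. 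Finally, part (d) comes from the equivalence $P(g)MQ(g)^{*}=M \Longleftrightarrow Q(g)M^{*}P(g)^{*}=M^{*}$, obtained by taking conjugate-transpose of both sides; this exchanges the roles of $P$ and $Q$ and of $X$ and $Y$, so $*$ carries $\mathcal A_{P,Q}(X,Y)$ onto $\mathcal A_{Q,P}(Y,X)$ bijectively.

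There is essentially no obstacle in the proof; the only subtlety worth flagging is that we are now allowing the $\widetilde{G}$-action to be twisted by the arbitrary monomial cover $(P,Q)$, so ``$G$-invariant'' means $P(g)MQ(g)^{*}=M$ rather than $gM=M$ in the naive sense. Once this convention is pinned down and unitarity of monomial $\mu_n^+$-matrices is acknowledged, the four parts become one-line computations of the form displayed above.
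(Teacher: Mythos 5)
Your proof is correct and is exactly the direct verification the author had in mind; the paper itself offers no argument beyond remarking that the result ``can be immediately verified.'' The one observation that drives everything — that monomial $\mu_n^+$-matrices are unitary, so one may insert $P(g)^*P(g)$ or $Q(g)^*Q(g)$ inside products — is the right one, and each of the four parts follows by the one-line computations you display.
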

 
     As a corollary we have the following important result:
     \begin{thm}\label{orth}  Let $O_1,\ldots, O_r$ be the set of all orientable orbits w.r.t. the $(P,Q)$-action. 
     	\begin{itemize}
     		\item[(a)] $\mathcal A_{P,Q}(X,Y)$ has a basis $\{B_i\}$ of $\mu_n^+$-matrices, such that $supp(B_i)=O_i$, $1\le i\le r$. In particular the dimension over $\CC$ of $\mathcal A_{P,Q}(X,Y)$ equals to the number of orientable orbits.
     		\item[(b)] Suppose that $|X|= |Y|$ and that $\mathcal A_{P,Q}(X,Y)$ contains an \emph{invertible} matrix. Then
     		$$\dim_{\CC} \mathcal A_{P,Q}(X,Y)=\dim_{\CC} \mathcal A_{P}(X).$$
     		In particular both the $(P,Q)$ and the $(P,P)$ actions have the same number of orientable orbits.
     		\item[(c)] If the $(P,P)$-action has a single orientable orbit, then every matrix $A\in \mathcal A_{P,Q}(X,Y)$ is \emph{orthogonal}, i.e. satisfies $AA^*=cI$.
     	\end{itemize}
     \end{thm}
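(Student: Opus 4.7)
For part (a), my plan is to exhibit the basis explicitly, mimicking what Algorithm \ref{alg} does in the $H^1$ case. Fix an orientable orbit $O_i$ and a basepoint $(x_i,y_i)\in O_i$. Set $(B_i)_{x_i,y_i}=1$, let all entries outside $O_i$ be zero, and spread the value over $O_i$ by the rule $(B_i)_{gx_i,gy_i}=\sign(gx_i,P(g))\cdot\sign(gy_i,Q(g))^{*}$ for $g\in\widetilde{G}$. Orientability of $O_i$ (via Lemma \ref{welldef} applied to this cover) ensures consistency: if $g_1x_i=g_2x_i$ and $g_1y_i=g_2y_i$ then $g_2^{-1}g_1$ stabilizes $(x_i,y_i)$, and its two signs agree, so the definition is independent of the choice of $g$. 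By construction $B_i$ is $\mu_n^+$-valued, supported exactly on $O_i$, and $\widetilde{G}$-invariant under $(P,Q)$. Conversely, for any $A\in\mathcal{A}_{P,Q}(X,Y)$ and any orbit $O$, the value of $A$ at any one point of $O$ determines its values at every point of $O$ by the same spreading formula; if $O$ is non-orientable, applying a stabilizer element with mismatched signs forces $A$ to vanish on $O$. Writing $c_i=A_{x_i,y_i}$, one obtains $A=\sum c_iB_i$, and since the $B_i$ have pairwise disjoint supports they are linearly independent. This proves (a).

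For part (b), I will produce explicit inverse linear maps between $\mathcal{A}_P(X)$ and $\mathcal{A}_{P,Q}(X,Y)$. Let $A\in\mathcal{A}_{P,Q}(X,Y)$ be invertible. The invariance equation $P(g)A=AQ(g)$ implies, after multiplying by $A^{-1}$ on both sides, the identity $Q(g)A^{-1}=A^{-1}P(g)$, equivalently $A^{-1}=Q(g)A^{-1}P(g)^{*}$ since $P(g)$ is unitary monomial. Hence $A^{-1}\in\mathcal{A}_{Q,P}(Y,X)$. Define
\begin{equation*}
\phi:\mathcal{A}_P(X)\longrightarrow \mathcal{A}_{P,Q}(X,Y),\quad C\longmapsto CA,\qquad \psi:\mathcal{A}_{P,Q}(X,Y)\longrightarrow \mathcal{A}_P(X),\quad D\longmapsto DA^{-1}.
\end{equation*}
Both maps land in the claimed spaces by Theorem \ref{alg1}(c), applied with the appropriate substitutions. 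They are mutually inverse because $\phi\psi(D)=DA^{-1}A=D$ and $\psi\phi(C)=CAA^{-1}=C$. Hence $\dim_{\CC}\mathcal{A}_{P,Q}(X,Y)=\dim_{\CC}\mathcal{A}_P(X)$, and by (a) this equals the number of orientable orbits for both actions.

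Part (c) is then a short combination. The identity matrix $I$ lies in $\mathcal{A}_P(X)$ since $P(g)IP(g)^{*}=I$; it is supported on the diagonal of $X\times X$, so the diagonal orbit is automatically orientable. If the $(P,P)$-action has only one orientable orbit, then by (a) $\mathcal{A}_P(X)$ is $1$-dimensional and must equal $\CC\cdot I$. Given any $A\in\mathcal{A}_{P,Q}(X,Y)$, Theorem \ref{alg1}(c) and (d) yield $AA^{*}\in \mathcal{A}_{P,Q}(X,Y)\cdot\mathcal{A}_{Q,P}(Y,X)\subseteq\mathcal{A}_P(X)=\CC\cdot I$, so $AA^{*}=cI$ for some scalar $c$.

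The main subtlety I anticipate lies in part (a): one must check that the spreading recipe really produces a well-defined $\mu_n^{+}$-matrix, i.e. that orientability of a single representative point, not merely well-definedness of the orbit class (already guaranteed by Lemma \ref{welldef}), translates into consistency for every $g$ that sends $(x_i,y_i)$ to the same target pair. This is exactly the content of Definition \ref{orient} combined with the transitive action of $\widetilde{G}$ on each orbit, so no new cohomological input is needed. Everything else reduces to bookkeeping with the module structure of Theorem \ref{alg1}.
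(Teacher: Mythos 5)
Your proof is correct and follows essentially the same route as the paper: part (a) by spreading values from orbit basepoints and exploiting orientability, part (c) by showing $\mathcal A_P(X)=\CC\cdot I$ and invoking Theorem \ref{alg1}(c),(d). In part (b) the paper exhibits two injections, $D\mapsto DC^{*}$ and $E\mapsto EC$, between $\mathcal A_{P,Q}(X,Y)$ and $\mathcal A_P(X)$, whereas you construct an explicit linear isomorphism via $C\mapsto CA$ and $D\mapsto DA^{-1}$ after showing $A^{-1}\in\mathcal A_{Q,P}(Y,X)$; this is a minor and equally valid variant of the same module-theoretic argument, slightly cleaner in that it produces a genuine inverse pair.
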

     Theorem \ref{orth}(c) is the source of many weighing matrices. The orthogonality of Projective-Spaces, Grassmanians and Flag-Variety weighing matrices discussed in \S 6 is a consequence of this principal. Another application of \emph{formal orthogonal pairs} appears in \cite{GK2020}.
     
     \begin{defn}
     	The basis $\{B_i\}$ is called an \emph{orbital basis}. It is unique up to multiplication by scalars.
     \end{defn}

	 The reader might want to compare Theorem \ref{orth} to the discussion in Higman \cite[p. 415]{Higman1987}. He discusses there the notion of \emph{weights}, whose collection is a matrix algebra spanned by an orbital basis, and their entrywise absolute values are members of the Bose-Mesner algebra of an association scheme. CDMs with a $(P,P)$-action are a special case (Called  the 'group case'). Higman does not speak about $(P,Q)$ actions and modules, and does not use cohomlogy.\\

     \begin{proof}
     	(a) Let $A\in \mathcal A_{P,Q}(X,Y)$. For every $G$-orbits $O\subseteq X\times Y$ the $(P,Q)$ action determines the values of $A_{x,y}$ for all $(x,y)\in O$ from a single entry $(x',y')\in O$ (named above an `orbit head'). For every orientable orbit $O_i$ choose a basepoint $(x_i,y_i)\in O_i$, and let $B_i$ be the unique $(P,Q)$-invariant matrix with $(B_i)_{x_i,y_i}=1$, and $(B_i)_{x_j,y_j}=0$ for $j\neq i$. Then $\supp B_i=O_i$ and $\{B_i\}$ are linearly independent. Every $A\in \mathcal A_{P,Q}(X,Y)$ can be written uniquely as $A=\sum A_{x_i,y_i} B_i$, since both sides agree on all the basepoints. Thus $\{B_i\}$ is a basis.\\
     	
     	(b) If $C\in \mathcal A_{P,Q}(X,Y)$ is of full rank, then right multiplication by $C^*$ gives a injection $ \mathcal A_{P,Q}(X,Y) \hookrightarrow \mathcal A_P(X)$. We can have an arrow in the reverse direction by right multiplying by $C$. Thus both spaces have the same dimension and the same number of orientable orbits.\\
     	
     	(c) The condition says that $\mathcal A_P(X)$ is $1$-dimensional. Since it contains the identity matrix, then $\mathcal A_P(X)$ is the algebra of scalar matrices, and by Theorem \ref{alg1}(c) $AA^*=cI\in \mathcal A_P(X)$ for all $A\in \mathcal A_{P,Q}(X,Y)$.
     \end{proof}
 
     \begin{ex}
     	We continue Examples \ref{159}--\ref{159color}. We work with $X=Y$ and $\psi_X=\psi_Y$, hence the monomial cover has $P=Q$, and $\mathcal A_P(X)$ is a two dimensional algebra, with basis $\{B_1=I,B_2\}$, with $\supp B_2=O_2$. Now, $B_2$ must be either symmetric or anti-symmetric, since $B_2^*=B_2^T\in \mathcal A_P(X)$ and since necessarily $B_2^T=aB_2+bI$ for some $a$ and $b$. The diagonal of $B_2$ is zero, hence $b=0$, and $a=\pm 1$ since transposition is an involution.\\
     	
     	Let us show that $B_2$ is symmetric. $B_2$ satisfies a quadratic equation, $B_2^2-\alpha B_2+\beta I=0.$ If $B_2$ was anti-symmetric, then $B_2^2$ was symmetric and by comparing off-diagonal entries we would conclude $\alpha=0$. Since $B_2$ has weight $8$, then $\beta=-8$, and $B_2$ would be a $W(15,8)$. Non square weights are impossible for odd orders, and we conclude that $B_2$ is symmetric.\\
     	
     	The minimal polynomial of $B_2$, $X^2-\alpha X-8$ is quadratic, and the characteristic polynomial is of degree $15$ with the same roots. Therefore the roots of $X^2-\alpha X-8$ are rational integers. Thus the roots (up to ordering and common sign) are either $(8,-1)$ or $(4,-2)$. Let $p,q$ be the multiplicity of each root in the characteristic polynomial. If $(8,-1)$ are the roots, then it must be satisfied that $8p-q=0$ ($trace(B_2)=0$) and $p+q=15$ (the degree). This is impossible over the integers. A different argument is that by Perron-Frobenius theorem, the spectral radius of $B_2$ is strictly smaller than the spectral radius of $|B_2|$ which is $8$. Therefore $\pm 8$ cannot be an eigenvalue of $B_2$ (We are grateful to an annonymus referee for pointing this out).
		On the other hand for the pair $(4,-2)$ we need $4p-2q=0$, which can be solved with $p=5$ and $q=10$. We conclude that $\alpha=\pm 2$. In particular, one of the matrices, $I+B_2$ and $I-B_2$ has eigenvalues $(3,-3)$. Then this matrix is a symmetric weighing matrix $W(15,9)$, with automorphism subgroup $A_6$.
     \end{ex}
 
     \begin{ex}
     	In the case of Paley's Kernel matrix (\S \ref{s41}), the action was a $(P,Q)$-action with $P\neq Q$. It can be checked, by checking orientable orbits that $\mathcal A_P(X)$ and $\mathcal A_Q(Y)$ are $2$-dimensional, in contrast to  $\mathcal A_{P,Q}(X,Y)$ which is only one dimensional (we have actually showed that the diagonal orbit is non-orientable). This must imply (Theorem \ref{orth}(b)) that the Paley kernel matrix $A\in \mathcal A_{P,Q}(X,Y)$ is non-invertible. Indeed, $AA^*=qI-J$ has $\mathbf{j}$ in its (left) kernel. 
     \end{ex}
 
\begin{rem}\
     	\begin{itemize}
     		\item[(i)] If the $(P,P)$-action of $\widetilde{G}$ is \emph{unsigned}, that is $P(g)$ are permutation matrices, then $\mathcal A_P(X)$ is the Bose-Mesner Algebra (or the Adjacency Algebra)  of the (non-commutative) association-scheme generated by the orbits of $G$ on $X\times X$. In the language pf Association-schemes this is the \emph{Schurian Case}. See \cite{bailey2004association} for background. The algebras $\mathcal A_P(X)$ in the unsigned case are also known by the name \emph{Hecke algebras}.
     		\item[(ii)] For general $(P,P)$-action, the algebra $\mathcal A_P(X)$ is a Weighted Coherent Algebra in the sense of \cite{Sankey2014}. Note that the treatment here is more general because we allow non-orientable orbits.
     		\item[(iii)] The $(P,Q)$-action motivates the definition of \emph{Association-(Bi)Modules} and \emph{Weighted-Coherent-(Bi)Modules} on $\mathcal A_{P,Q}(X,Y)$, over the algebras $\mathcal A_P(X)$ and $\mathcal A_Q(Y)$. We were unable to find in the literature these natural extensions.
     	\end{itemize}
     	 
     \end{rem}
 
     \subsection{Behavior under Hadamard Multiplication}
     We have seen above in Proposition \ref{prop:214} that $\mathbf h(G,\mathcal O)$ is closed under the Hadamard multiplication. This fact can be adapted easily to the context of modules $\mathcal A_{P,Q}(X,Y)$ and algebras $\mathcal A_P(X)$. For complex matrix spaces $M,M'$, let $M\circ M'$ be the matrix vector-space generated by the Hadamard products $m\circ m'$ for all $m\in M$ and $m'\in M'$.
     \begin{thm} Let $(P,Q)$ and $(P',Q')$ be two monomial covers of the action of a group $G$ on $\mathcal O$-matrices.
     	For a suitable monomial cover $(P'',Q'')$ we have:
     	\begin{align*}
     	    \mathcal A_{P}(X)\circ \mathcal A_{P'}(X)&\subseteq \mathcal A_{P''}(X), \text{ and}\\
     	    \mathcal A_{P,Q}(X,Y)\circ \mathcal A_{P',Q'}(X,Y)& \subseteq \mathcal A_{P'',Q''}(X,Y).
     	\end{align*}
	
     \end{thm}
 
     \begin{proof}
		The main problem here is that $(P,Q)$ is a monomial cover with a given cover group $\rho: \widetilde{G}\to G$ and $(P',Q')$ is a monomial cover with another cover group $\rho':\widetilde{G}'\to G$. We need to find a third cover group $\rho'':\widetilde{G}''\to G$ which is a cover of both $\widetilde{G}$ and $\widetilde{G}'$. We define the fibered product 
		$$\widetilde{G}'':=\widetilde{G'}\times_G\widetilde{G} \ := \ \{(g',g)\in \widetilde{G}'\times \widetilde{G}\ | \ \rho'(g')=\rho(g)\}$$ to which $P,Q,P',Q'$ can be extended via the projections. It is routine to check Definition \eqref{def:mon_cov} that $(P,Q)$ and $(P',Q')$ are monomial covers with respect to the new group and that the algebras/modules they define are the same. This reduces us to the case that the covering group $\widetilde{G}$ is the same for all monomial covers involved.\\

     	Note that $|P|=|P'|$ and $|Q|=|Q'|$ are the permutation matrices for the action of $G$ on $X$ and $Y$.
     	We write $P=L|P|$, $Q=R|Q|$, $P'=L'|P|$ and $Q'=R'|Q|$ for diagonal $\mu_n$-matrices $L,L':\widetilde{G}\to D_X$ and $R,R':\widetilde{G}\to D_Y$. Then matrices in $ A_{P,Q}(X,Y)\circ \mathcal A_{P',Q'}(X,Y)$ are invariant under the monomial pair $(P'',Q'')$ defined by $P''=LL'|P|=P\circ P'$ and $Q''=RR'|Q|=Q\circ Q'$. 
     \end{proof}
     
     \subsection{Quasiproducts}\label{orb_mor}
     
      As an application of the constructions of algebras and modules, we introduce \emph{Quasiproducts}. They can be thought of as a `twisted' version of the Kronecker product, and are defined under a group-theoretic situation that we now describe. The idea is that under certain circumstances the quasiproduct of weighing matrices is again a weighing matrix. In this following subsection we will restrict the discussion to $H^1$-developments.\\

    Suppose we are given a short exact sequence of groups 
    \be\label{nonsplit} 1\to Z \to G \to PG\to 1,\ee and $Z\subseteq Z(G)$ is in the center of $G$. This extension is not necessarily split. As usual we are given transitive $G$-sets $X,Y$ with basepoints $x_0,y_0$, stabilizers  $H_X,H_Y$, and homomorphisms $\psi_X$ and $\psi_Y$. We assume that the subgroup $Z$ is contained in $H_X,H_Y$, and so the sequence \eqref{nonsplit} restricts to the sequences
    \begin{align}
    & \label{HX} 1 \to Z \to H_X \to PH_X\to 1\\
    & \label{HY} 1 \to Z \to H_Y \to PH_Y\to 1.
    \end{align} We shall assume that the two restricted sequences are \emph{split}. In particular there are subgroups $H_X',H_Y'$ such that $H_X=ZH_X'\cong Z\times H_X'$ and $H_Y=ZH_Y'\cong Z\times H_Y'$. Throughout \S \ref{orb_mor} we will identify $X=G/H_X$ and $Y=G/H_Y$ as $G$-sets. Define $X'=G/H_X'$ and $Y'=G/H_Y'$. There are natural maps $X'\to X$, $Y'\to Y$ and $X'\times Y'\to X\times Y$. Given a full set $\{g_x\}$ of coset representatives for $G/H_X$, the set $\{g_x z\}_{x\in X,z\in Z}$ is a full set of coset representatives of $G/H_{X'}$, and similarly for $Y$. Any $G$-orbit in $X\times Y$ can be generated from a point $(H_X,yH_Y)$ for some $y\in G$.
    
    \begin{lem}\label{lem:Z/O}
    	Every $G$-orbit $O'\subseteq X'\times Y'$ maps onto some $G$-orbit in $O\subseteq X\times Y$, and $|O'|=|Z||O|$. There are $|Z|$ $G$-orbits in $X'\times Y'$ above any $G$-orbit in $X\times Y$. More explicitly, if $(H_X,yH_Y)$ generates an orbit $O\subseteq X\times Y$, then $\{(H_{X'},yzH_{Y'})\}_{z\in Z}$ are generators of the $|Z|$ distinct orbits above $O$.
    \end{lem}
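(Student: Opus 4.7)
The plan is to compare the $G$-orbit structures of $X' \times Y'$ and $X \times Y$ through an auxiliary commuting $Z \times Z$-action on $X' \times Y'$. The inclusions $H_{X'} \subseteq H_X$ and $H_{Y'} \subseteq H_Y$ induce a $G$-equivariant surjection $p \colon X' \times Y' \to X \times Y$, which immediately shows that every $G$-orbit $O' \subseteq X' \times Y'$ is carried onto a single $G$-orbit $O \subseteq X \times Y$, establishing the first assertion.

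Next, I would define $(z_1,z_2)\cdot(uH_{X'},vH_{Y'}) := (uz_1 H_{X'}, vz_2 H_{Y'})$; centrality of $Z$ makes this a well-defined action of $Z \times Z$ on $X' \times Y'$ that commutes with the $G$-action, while the splittings force $H_{X'} \cap Z = H_{Y'} \cap Z = \{1\}$, so each $Z \times Z$-orbit is a full $p$-fibre of cardinality $|Z|^2$. Applying orbit-stabiliser to the point $(H_{X'}, yzH_{Y'})$, the centrality of $z$ reduces the stabiliser to $K' := H_{X'} \cap yH_{Y'}y^{-1}$, giving $|O'| = |G|/|K'|$; likewise $|O| = |G|/|K|$ with $K := H_X \cap yH_Y y^{-1}$. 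Using the internal direct product decompositions $H_X = Z \cdot H_{X'}$ and $yH_Yy^{-1} = Z \cdot yH_{Y'}y^{-1}$, a direct comparison of the two decompositions of an element $k \in K$ shows that its $H_{X'}$-part actually lies in $yH_{Y'}y^{-1}$ and hence in $K'$. Thus $K = Z \cdot K'$ is also an internal direct product, so $[K:K'] = |Z|$ and $|O'| = |Z|\cdot|O|$.

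Finally, since the commuting $Z \times Z$-action is transitive on every $p$-fibre, it acts transitively on the set of $G$-orbits lying above a fixed $O$. The diagonal $\Delta Z = \{(z,z)\mid z \in Z\}$ lies in the stabiliser of the $G$-orbit of $(H_{X'},yH_{Y'})$ because $(z,z)$ acts as the element $z \in G$ itself (centrality again). The same direct-product identification used to prove $K = Z K'$ also shows that no extra pair $(z_1,z_2)$ stabilises this $G$-orbit, so the stabiliser equals $\Delta Z$ and the number of $G$-orbits above $O$ is $|Z \times Z|/|\Delta Z| = |Z|$. Taking $\{(1,z)\}_{z \in Z}$ as coset representatives of $\Delta Z$ in $Z \times Z$ and applying them to $(H_{X'}, yH_{Y'})$ produces precisely the generators $\{(H_{X'}, yzH_{Y'})\}_{z \in Z}$ listed in the statement. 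The main technical point I expect is the identification $K = Z K'$ (equivalently, the exact determination of the $Z \times Z$-stabiliser) from the internal direct-product structure; once this is in hand, the rest is routine orbit-stabiliser bookkeeping.
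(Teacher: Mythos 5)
Your outline parallels the paper for the first two claims: the $G$-equivariant projection carries each orbit $O'$ onto a single orbit $O$, and orbit-stabilizer reduces $|O'|=|Z||O|$ to the identity $K=ZK'$ between the two stabilizers. Your route to distinctness of the $|Z|$ orbits above $O$ is genuinely different from the paper's: the paper gets it by a global cardinality count (summing $|O'_z|\le |Z|^2|O|$ over all orbits $O$ and comparing with $|X'\times Y'|=|Z|^2|X\times Y|$ to force all $O'_z$ above a fixed $O$ to be disjoint), whereas you exhibit a commuting $Z\times Z$-action, recognize each $p$-fibre as a free $Z\times Z$-orbit, and pin down the stabilizer of a $G$-orbit above $O$ in $Z\times Z$ to $\Delta Z$. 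Your version is local and structurally cleaner as far as it goes.

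The gap is the claim $K=ZK'$, which both your orbit-size step and your stabilizer computation rest on. Your justification --- ``a direct comparison of the two decompositions of $k\in K$ shows that its $H_{X'}$-part lies in $yH_{Y'}y^{-1}$'' --- does not go through: writing $k=z_1a=z_2b$ with $z_i\in Z$, $a\in H_{X'}$, $b\in yH_{Y'}y^{-1}$ gives only $a=(z_1^{-1}z_2)b\in Z\cdot yH_{Y'}y^{-1}=yH_Yy^{-1}$; to get $a\in yH_{Y'}y^{-1}$ you would need $z_1=z_2$, and there is no reason the two $Z$-parts agree, since $H_{X'}$ and $yH_{Y'}y^{-1}$ are complements to $Z$ inside two \emph{different} groups with no a priori compatibility. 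In fact $K=ZK'$ can fail: take $G=GL_2(F)$, $Z$ the scalars, $H_X$ upper triangular, $H_Y$ lower triangular, $H'_X,H'_Y$ the matrices with entry $1$ in position $(1,1)$, and $y$ the coordinate-swap matrix; then $yH_Yy^{-1}=H_X$, so $K=H_X$, while $K'$ is the unipotent upper-triangular group and $ZK'$ consists only of upper-triangular matrices with equal diagonal entries, a proper subgroup of $H_X$. So $K\ne ZK'$ for this orbit, and correspondingly the $Z\times Z$-stabilizer you compute is strictly larger than $\Delta Z$ there. This is a shared gap: the paper's proof asserts ``We have $S'\subseteq S$ and $S=ZS'$'' with no argument. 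In the downstream applications only the orbit supporting the weighing matrix is used ($y=1$ in the Quasiprojective case), and for that orbit $K=ZK'$ does hold, but as a general statement the lemma needs an extra hypothesis, so this is not a defect you could have repaired blind.
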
 

    \begin{proof}
    	Let $O'$ be the orbit generated by $(H_{X'},yH_{Y'})$. Let $O$ be the orbit below $O'$, that is $O$ is generated by $(H_X,yH_Y)$. Clearly the natural map $O'\to O$ is surjective. The stabilizers at the generating points at $O$ and $O'$ are $S:=H_X\cap yH_Yy^{-1}$ and $S':=H'_{X}\cap yH'_{Y}y^{-1}$ respectively. We have $S'\subseteq S$ and $S=ZS'\cong Z\times S'$. This proves that $|O'|=|Z||O|$.\\
    	
        If $(H'_{X},y_1H'_{Y})\in O_1'$ and  $(H'_{X},y_2H'_{Y})\in O_2'$ are generators of two $G$-orbits $O_1',O_2'$ above the same orbit in $O\subset X\times Y$, then there exist $g\in G$ and $z,z'\in Z$ such that $gH'_{X}=zH'_X$ and $gy_2H_{Y'}=y_1z'H_{Y'}$. This shows that $(zH_X',y_1z'H_Y')\in O_2'$, and hence $(H_X',y_1z'z^{-1}H_Y')\in O_2'$. Thus $\{(H'_{X},yzH'_{Y})\}_{z\in Z}$ generate all orbits above the orbit of $(H_X,yH_Y)$.\\
        
         It remains to show that the orbits $O'_z$ above $O$, generated by $(H'_X,yzH'_Y)$ for $z\in Z$ are distinct. Since this is full list of all orbits above $O$, by the first part the lemma, the cardinality of $\bigcup_z O'_z$ is less than or equal to $|Z|^2\cdot |O|$. This holds for all orbits $O\subset X\times Y$, hence $|X'\times Y'|\le |Z|^2 \cdot |X\times Y|$. But there is an equality there, which implies that  $|\bigcup_z O'_z|=|Z|^2\cdot |O|$ for every $O$. Thus the union must be  disjoint, and the orbits are distinct.     
    \end{proof}

    For every orbit $O\subseteq X\times Y$ we fix (non canonically) a generator $(H_X,g_OH_Y)$, and let $O_z\subseteq X'\times Y'$ be the orbit above $O$ generated by $(H'_X,g_OzH'_Y)$.  
    
    \begin{lem}\label{lem:orient5} Suppose that $\psi_X|_Z=\psi_Y|_Z=\psi$. 
    	Then an orbit $O$ is orientable, if and only if (some orbit) (all orbits) $O_z$ above $O$ are orientable.
    \end{lem}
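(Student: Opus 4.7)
The plan is to translate both orientability statements via Proposition~\ref{nc} into identities of homomorphisms on explicit point-stabilizers in $G$, and then match the two identities using the decomposition $S = Z \cdot S'$ from the preceding lemma together with the hypothesis $\psi_X|_Z = \psi_Y|_Z = \psi$.

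Under the identifications $X = G/H_X$, $Y = G/H_Y$, $X' = G/H'_X$, $Y' = G/H'_Y$, the orbit $O$ is generated by $(H_X, g_O H_Y)$ with point-stabilizer $S := H_X \cap g_O H_Y g_O^{-1}$, while $O_z$ is generated by $(H'_X, g_O z H'_Y)$ with point-stabilizer $H'_X \cap (g_O z) H'_Y (g_O z)^{-1}$; centrality of $z$ collapses the latter to $S' := H'_X \cap g_O H'_Y g_O^{-1}$, independent of $z$. Taking coset representatives $g_{x_0} = 1$ and $g_{g_O y_0} = g_O$ (and the corresponding primed versions), Proposition~\ref{nc} rewrites orientability of $O$ as the identity $\psi_X(g) = \psi_Y(g_O^{-1} g g_O)$ for every $g \in S$, and orientability of $O_z$ as $\psi'_X(g) = \psi'_Y(g_O^{-1} g g_O)$ for every $g \in S'$. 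In particular the $O_z$-condition does not depend on $z$, which already justifies the quantifier ``any'' in the statement.

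The forward implication follows by restriction: any $g \in S' \subseteq S$ lies in $H'_X$, so $\psi_X(g) = \psi'_X(g)$, and $g_O^{-1} g g_O \in H'_Y$, so $\psi_Y(g_O^{-1} g g_O) = \psi'_Y(g_O^{-1} g g_O)$; thus the identity for $O$ at $g$ specializes to the identity for $O_z$ at $g$.

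For the reverse implication I plan to invoke the decomposition $S = Z \cdot S'$ from the preceding lemma (which itself rests on the splittings $H_X = Z \times H'_X$, $H_Y = Z \times H'_Y$, and on centrality of $Z$, ensuring $g_O H_Y g_O^{-1} = Z \times g_O H'_Y g_O^{-1}$). Writing a general $g \in S$ as $g = z h$ with $z \in Z$ and $h \in S'$, centrality yields $g_O^{-1} g g_O = z \cdot g_O^{-1} h g_O$ with $g_O^{-1} h g_O \in H'_Y$. The hypothesis $\psi_X|_Z = \psi_Y|_Z = \psi$ lets me factor
\[
\psi_X(g) = \psi(z)\,\psi'_X(h), \qquad \psi_Y(g_O^{-1} g g_O) = \psi(z)\,\psi'_Y(g_O^{-1} h g_O),
\]
and the $\psi(z)$ factors cancel, leaving exactly the $O_z$-orientability identity at $h$. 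The main obstacle will be the careful use of the decomposition $S = Z \cdot S'$; the role of the hypothesis $\psi_X|_Z = \psi_Y|_Z$ is precisely to annihilate the central ``twist'' so that the condition on $S$ reduces cleanly to the condition on $S'$.
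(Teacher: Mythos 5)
Your proposal is correct and follows essentially the same route as the paper's own proof: both translate orientability into the identity of Proposition~\ref{nc} on the point stabilizers $S$ and $S'$, note that $S' \subseteq S$ gives one direction, and then use the decomposition $S = ZS'$ together with $\psi_X|_Z = \psi_Y|_Z$ to cancel the central factor in the other direction. Yours is slightly more explicit (writing out both implications and verifying the $z$-independence of the stabilizer of $O_z$), but the key ideas match the paper exactly.
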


    \begin{proof}
    	According to Proposition \ref{nc}, the orbit $O=G(H_X,g_OH_Y)$ is orientable iff $\psi_X(g)=\psi_Y(g_O^{-1}gg_O)$ for all $g\in S:=H_X\cap g_O^{-1}Hg_O$. Similarly $O_z$ is orientable iff $\psi_Y(g)=\psi_Y((g_Oz)^{-1}gg_Oz)$ for all $g\in S'=H'_X\cap (g_{O}z)^{-1}H'_Y(g_Oz)=H'_X\cap g_{O}^{-1}H'_Yg_O.$ This last condition is equivalent to $\psi_X(g)=\psi_Y(g_O^{-1}gg_O)$ for all $g\in S'$, since $Z\le H_Y$ and $\psi_Y$ is a homomorphism defined on $H_Y$. By our assumption that $\psi_X|_Z=\psi_Y|_Z$ and since $S=S'Z$ this equality extents	to all $g\in S$. Thus the orientability of $O_z$ for some $z$ is equivalent to the orientability of $O$.
    \end{proof}

    For every orbit $O\subset X\times Y$ we will choose a basepoint $(x_O,y_O)=(H_X,g_OH_Y)$, such that $g_O=g_y$ for $y=y_O$.
    Let $O_1,\ldots,O_r\subseteq X\times Y$ be the list of orientable orbits w.r.t. the homomorphisms $\psi_X,\psi_Y$. We assume that $\psi_X|_Z=\psi_Y|_Z$. Then $O_{i,z}$, $1\le i \le r$ and $z\in Z$ are the orientable orbits of $X'\times Y'$. Let $\{B_i\}$, $1\le i \le r$ be the orbital basis for the monomial action defined by $\psi_X,\psi_Y$ and the coset systems $\{g_x\},\{g_y\}$, normalized so that $(B_i)_{(H_X,g_{O_i}H_Y)}=1$.\\
    
     Similarly let $B_{i,z}$, $1\le i \le r$ and $z\in Z$ be the
     orbital basis for the monomial action defined by $\psi_X,\psi_Y$ and the coset systems $\{g_xz\},\{g_yz\}$, normalized so that $(B_{i,z})_{(H'_X,g_{O_{i}}zH'_Y)}=1$. Our goal is to find a relationship between the monomial bases $\{B_i\}$ and $\{B_{i,z}\}$.\\
     
     Let us figure out the value of $(B_i)_{x,y}$ where $(x,y)=g(H_X,g_{O_i}H_Y)$. The value is given by the cocycle values in \eqref{3.9}:
     $$(B_i)_{x,y}=\psi_X(g_x^{-1}g (\overline{(g_x^{-1}g)})^{-1}) \cdot \psi_Y(g_y^{-1}g (\overline{(g_y^{-1}g)})^{-1})^*.$$
     We have $gg_{O_i}H_Y=g_yH_Y$, so $g_y^{-1}gg_{O_i}\in H_Y$. But since $g_{O_i}$ has been chosen to be a coset representative, then $g_y^{-1}g (\overline{(g_y^{-1}g)})^{-1}=g_y^{-1}gg_{O_i}$. The analogous formula for $X$ is just $g_x^{-1}g$. Thus we obtain the simpler formula,
     $$(B_i)_{x,y}=\psi_X(g_x^{-1}g)\psi_Y(g_y^{-1}gg_{O_i})^*.$$
     
     Now, let $(x',y')=g(H'_X,g_{O_i}zH'_Y)=(z_1g_xH'_X,z_2g_yH'_Y)$ be a point in $O_{i,z}$ above $(x,y)$ (computed with the same $g$). Similar considerations lead to an analogous formula: 
     $$ (B_{i,z})_{x',y'}=\psi_X((z_1g_x)^{-1}g)\psi_Y((z_2g_y)^{-1}gg_{O_i}z)^*=(B_i)_{x,y} \psi_X(z_1)^*\psi_Y(z_2)\psi_Y(z)^*,$$
     and equivalently \be \label{bizx'y'} \forall (x',y')\in O_{i,z}, \ \  (B_{i,z})_{x',y'}=(B_i)_{x,y}\psi((zz_1)^{-1}z_2).\ee
     
     For convenience, let us summarize now the axioms we assume for the setup of \emph{Quasiporducts}.
     
     \begin{itemize}
     	\item[A1.] We are given groups $Z,G,PG$ sitting in an exact sequence \eqref{nonsplit}, $Z\subseteq Z(G)$ is in the center.
     	\item[A2.] $H_X,H_Y\le G$ are subgroups containing $Z$. $\psi_X:H_X\to \mu_n$ and $\psi_Y:H_Y\to \mu_n$ are homomorphisms.
     	\item[A3.] The restricted sequences \eqref{HX}-\eqref{HY} are split, with sections $s_X:PH_X\to H_X$ and $s_Y:PH_Y\to H_Y$. Let $H'_X=s_X(PH_X)$ and $H'_Y=s_Y(PH_Y)$. 
     	\item[A4.] $\psi_X|_Z=\psi_Y|_Z=\psi$.
     	\item[A5.] We choose coset representatives $\{g_x\}$ for $x\in X=G/H_X$ and $\{g_xz\}$, $x\in X$ and $z\in Z$ for $G/H_X'$. Similarly for $Y$. This data is enough to generate unique ($H^1$-) monomial covers $G\to (P,Q)\in  Mon(X,\mu_n)\times Mon(Y,\mu_n)$, and  $G\to (P',Q')\in  Mon(X',\mu_n)\times Mon(Y',\mu_n)$.
     	\item[A6.] For every $G$-orbit in $X\times Y$, we pick up elements $g_O\in G$ s.t. $(H_X,g_OH_Y)\in O$, and $g_O$ is a coset representative for $H_Y$. Let $\{B_i\}$ be the orbital basis of $\mathcal A_{P,Q}(X,Y)$, characterized by the normalization $B_{(H_X,g_OH_Y)}=1$. Likewise, let $\{ B_{i,z}\}$ be the orbital basis of $\mathcal A_{P',Q'}(X',Y')$, characterized by the normalization $B_{(H'_X,g_OzH'_Y)}=1$.
     \end{itemize}
 
     \begin{defn}\label{def:quasip}
     	 Let $A\in \mathcal A_{P,Q}(X,Y)$ and let $T$ be a $Z$-developed matrix (in the sense of group development), identified  as an element of $\mathbb C[Z]$. Write $A=\sum a_iB_i$ and $T=\sum t_z[z]$ for the orbital basis decompositions. Then the \emph{quasiproduct} of $A$ and $T$ is defined as 
     	$$ A \boxtimes T= \sum a_it_z B_{i,z}.$$ 
     \end{defn}
 
     \begin{ex}
     	Consider the case $G=Z\times PG$, and take $\psi_X,\psi_Y$ to be characters on $PH_X$ and $PH_Y$, extended to $H_X=Z\times PH_X$ and $H_Y=Z\times PH_Y$ by the projections to $PH_X$ and $PH_Y$. By appropriately indexing the matrices, it is easy to show that $B_{i,z}=B_i\tensor P_z$ (Kronecker Product), where $P_z$ is the permutation $Z$-developed matrix defined by $(P_z)_{z_1,z_2}=1$ if $z_2z_1^{-1}=z$, and $0$ otherwise. In particular, the quasiproduct $A\boxtimes T=A\tensor T$, the Kronecker product.
     \end{ex}

     We are interested in cases where the quasiproduct $A\boxtimes T$ is a weighing matrix. Suppose that $A\in M\mu_n(X,Y)$ is invariant under the $(P,Q)$-action, and that the $(P,P)$-action has a single orientable orbit. Then by Theorem \ref{orth}(c), $A$ is a weighing matrix. We have
     
     \begin{thm}\label{quasi2}
     	Suppose that the $(P,P)$-action has a single orientable orbit in $X\times X$ (necessarily the diagonal). Suppose that $A\in M\mu_n(X,Y)$ is an invertible $(P,Q)$-invariant matrix. Then the quasiproduct $A\boxtimes T$ is a weighing matrix, if and only if $T$ is a weighing matrix.
     \end{thm}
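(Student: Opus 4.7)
The plan is to show the identity
\[ (A\boxtimes T)(A\boxtimes T)^* \ = \ w_A\cdot \tau(TT^*) \]
where $w_A$ is the weight of $A$, the map $\tau:\mathbb C[Z]\to\mathcal A_{P'}(X')$ is a natural linear isomorphism, and $TT^*$ is the product in the group algebra. Once this identity is established, the theorem follows directly: since $\tau$ sends $[e]$ to $I_{X'}$ and is injective, $(A\boxtimes T)(A\boxtimes T)^*$ is a scalar multiple of $I_{X'}$ if and only if $TT^*$ is a scalar multiple of $[e]$ in $\mathbb C[Z]$, which is exactly the condition that $T$ be a weighing matrix.

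To set up the identity, note first that by Theorem \ref{orth}(c) applied to the $(P,Q)$-action, the single-orientable-orbit assumption on $(P,P)$ forces $AA^*=w_AI_X$ with $w_A>0$. By Lemma \ref{lem:orient5}, the $(P',P')$-action has exactly $|Z|$ orientable orbits $\{O_{0,z}\}_{z\in Z}$, all lying above the diagonal orbit of $(P,P)$, and Theorem \ref{orth}(a) yields the orbital basis $\{B_{0,z}\}_{z\in Z}$ with $B_{0,e}=I_{X'}$. Since $|O_{0,z}|=|X'|$, each $B_{0,z}$ is a monomial matrix. I define $\tau([z])=B_{0,z}$, a linear isomorphism $\mathbb C[Z]\cong \mathcal A_{P'}(X')$.

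The key step is to verify the identity, which (writing $A\boxtimes T=\sum_{i,z}a_it_zB_{i,z}$ and expanding the product) reduces to the factorization
\[ B_{i,z}B_{j,w}^* \ = \ c_{ij}\cdot B_{0,zw^{-1}}, \]
where $c_{ij}$ is the scalar with $B_iB_j^*=c_{ij}I_X$ inside $\mathcal A_P(X)=\mathbb C\cdot I_X$ (so in particular $c_{ij}=0$ whenever $i\neq j$, because the diagonal of $B_iB_j^*$ vanishes on disjoint orbits). This factorization can be verified by an entry-wise computation at the base point $(H'_X,zw^{-1}H'_X)$ of $O_{0,zw^{-1}}$, using formula \eqref{bizx'y'} for the entries of the $B_{i,z}$. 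The $\psi$-twist factors appearing in \eqref{bizx'y'} split, under the hypotheses that $Z$ is central in $G$ and $\psi_X|_Z=\psi_Y|_Z=\psi$, into a factor depending only on $(z,w)$ and a factor depending only on the $X$-coordinates, whose summed contribution is exactly $(B_iB_j^*)_{x_0,x_0}=c_{ij}$. Granting this, the double sum $\sum_{i,j,z,w}a_i\overline{a_j}t_z\overline{t_w}B_{i,z}B_{j,w}^*$ factors as $\bigl(\sum_{i,j}a_i\overline{a_j}c_{ij}\bigr)\bigl(\sum_{z,w}t_z\overline{t_w}B_{0,zw^{-1}}\bigr)=w_A\cdot\tau(TT^*)$, as desired.

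The main obstacle is proving the product formula $B_{i,z}B_{j,w}^*=c_{ij}B_{0,zw^{-1}}$ in the (possibly non-split) extension case. The delicate point is that the cocycle twists in the entries of the $B_{i,z}$, which arise from the non-canonical choice of coset representatives $\{g_xz\}$ for $X'$, must conspire to cancel all $X$-dependent parts except the sum defining $c_{ij}$, while separately producing the shift $zw^{-1}$ on the $Z$-coordinates. Both the centrality of $Z$ in $G$ (so that the twist factor from the extension is well-behaved on $Z$) and the compatibility $\psi_X|_Z=\psi_Y|_Z$ are used essentially in this cancellation.
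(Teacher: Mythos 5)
Your proposal is correct and follows essentially the same route as the paper's proof: the crux in both is the entry-wise verification, via formula \eqref{bizx'y'}, of the twisted group-algebra relation $B_z B_w^* = c\,\Delta_{zw^{-1}}$ among the $(P',Q')$-orbital basis elements lying over $\supp(A)$, from which orthogonality of the Quasiproduct reduces to orthogonality of $T$ in $\CC[Z]$. Framing the conclusion through the isomorphism $\tau:\CC[Z]\to\mathcal A_{P'}(X')$ is a mild repackaging of the paper's final step, and the general indices $i,j$ you carry are redundant here since the single-orientable-orbit hypothesis on $(P,P)$ together with Theorem \ref{orth}(b)--(c) forces $\mathcal A_{P,Q}(X,Y)$ to be one-dimensional.
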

 
     \begin{proof} Let $(P',Q')$ be the action on $X'\times Y'$-matrices above $(P,Q)$. We know by Theorem \ref{orth}(c) that $A$ itself is a weighing matrix, and from part (b) there is a unique $(P,Q)$-orientable orbit, which is the support of $A$. By Lemma \ref{lem:Z/O} there are $|Z|$ $(P',Q')$- orientable orbits above $supp(A)$, and an orbital basis $\{B_z\}_{z\in Z}$ for $\mathcal A_{P',Q'}(X,Y)$. Likewise there is an orbital basis $\{\Delta_z\}_{z\in Z}$ for the $(P',P')$-action, above the main diagonal $\Delta\subseteq X\times X$. We wish to study the matrix product structure of $\mathcal A_{P',Q'}(X,Y)$, or more precisely to write the products $D=B_zB_w^*$ as linear combinations of $\{\Delta_z\}$. It is enough to compute this product at a point $(x_0',v')$, where $x_0'=1\cdot H_X'$. It is clear by Lemma \ref{lem:orient5} that $D_{x_0',v'}=0$ if $v'\notin Zx_0'$, since that point lies above a point away from the main diagonal, and thus is a non-orientable point. 
     So we shall take $v'=\zeta H'_X$ for some $\zeta \in Z$. Using \eqref{bizx'y'}, for such $v'$  we have:
     	
    \begin{gather*}
    D_{x_0',v'}=\sum_{y':\ (x'_0,y')\in supp(B_z)\ \wedge \ (v',y')\in supp(B_w)} (B_{z})_{(x'_0,y')}(B_{w}^*)_{(y',v')}= \\ 
    =\sum_{y\in Y; z_2\in Z} \delta_{y,z_2} \cdot A_{(x_0,y)}\psi(z^{-1}z_2) (A_{(x_0,y)})^*\psi((w\zeta)^{-1}z_2)^* (\text{by \eqref{bizx'y'}})\\
    \text{where } x_0'=1\cdot H'_X;\ y'=g_yz_2H'_Y;\  v'=\zeta H'_X,\\
	=\sum_{y\in Y; z_2\in Z} \delta_{y,z_2}|A_{x_0,y}|^2 \psi(z^{-1}w\zeta),
    \end{gather*}
    and $\delta_{y,z_2}\in \{0,1\}$ equals $1$ if and only if $(x_0',y')=(H'_{X},g_yz_2H'_Y)\in supp(B_z)$ and $(v',y')=(\zeta H'_X,g_yz_2H'_Y)\in supp (B_w)$.\\
    
    Let us compute $\delta_{y,z_2}$. $(x_0',y')\in \supp (B_z)$ iff $z=z_2$, and $(v',y')\in \supp(B_w)$ iff $z_2\zeta^{-1}=w$.
	So throughout the summation we fix $z_2=z$. But then notice that $z^{-1}w\zeta=1\implies \psi(z^{-1}w\zeta)=1$ so we have
	$$D_{x_0',v'}=\sum_{y\in Y} |A_{x_0,y}|^2=\lambda,$$ where $\lambda$ is the weight of the weiging matrix $A$. To summarize,

	$$D_{x_0',v'}=\begin{cases}
		\lambda & v'\in Zx_0'\\
		0 & v'\notin Zx_0'
	\end{cases}.$$

	From this and from $\zeta=zw^{-1}$ we conclude that
    
    \be \label{qpord} B_zB_w^*=\lambda \Delta_{zw^{-1}}.\ee 
    
    Now the quasiproduct $M=A\boxtimes T=\sum_{z}t_zB_z$ and by \eqref{qpord} we have $MM^*=\sum_{z,w}t_z\overline{t_w}\lambda \Delta_{zw^{-1}}$. This matrix is scalar if and only if $TT^*$ is scalar, which proves the theorem.

    \end{proof}

	In the next section we will see examples of quasiproducts which are not kronnecker products.

    \section{More Examples - Projective-Space, Grassmannian and Flag-Variety weighing-matrices}\label{sec:proj}
	The Projective-Space matrices constructed here are not new. They have originally appeared in the papers \cite{berman1977weighing,berman1978families} of G. Berman, constructed from finite geometries. Here we reconstruct and reinterpret these matrices from our cohomology machinary. The orthogonality of the matrices will be a direct consequence of Theorem \ref{orth}(c). The Quasi-Projective-Space, Grassmannian and Flag weighing matrices are presumably new.

	\subsection{Projective-Space and Quasi-Projective-Space weighing matrices.}\label{sec:proj}
	
	The background on finite projective spaces is well-known and standard, see e.g. \cite{Dembowski1968,Baer2005-eb}. For conveniece and to set up notation we briefly sketch the material.\\

	Let $F=\FF_q$ be the finite field of $q$ elements, $q$ a prime power, and $d>0$ an integer. The \emph{Projective-Space} of dimension $d$ over $F$ is
	$$\PP^d(F)\ := \ \left\{\text{Lines $\subset F^{d+1}$ through the origin }   \right\}=\left\{ v\in F^{d+1} \ | v\neq 0 \right\}/\sim$$
	with the relation $\sim$ defined by $v\sim w$ iff $v=\lambda w$ for some $\lambda\in F^\times$.
	We write points in $\PP^d(F)$ as $[v_0:v_1:\ldots :v_d]=[v_0,v_1,\cdots,v_d]/\sim$. It is well-known that $\#\PP^d(F)=(q^{d+1}-1)/(q-1).$\\
	
	The set of \emph{Projective Hyperplanes} is
	$$\LL^d(F)\ := \ \left\{\text{Linear subspaces }V\subset F^{d+1} \ | \dim V=d  \right\}.$$
	Fix a nondegenerate bilinear form $\langle\ ,\ \rangle:F^{d+1}\times F^{d+1} \to F$. There is a bijection $\PP^d(F)\longleftrightarrow\LL^d(F)$ by a \emph{Duality map}. Namely, 
	$$ \mathrm{Duality}: \ v/\sim \ \longleftrightarrow\ H=\{x\in F^{d+1} \ |\ \langle v,x\rangle =0 \ \}.$$
	We next define occurrence relations. A pair $(v/\sim,H)$ is \emph{occurring} if $v\in H$, and otherwise it is \emph{nonoccurring}.
	The linear group $GL_{d+1}(F)$ acts on both $\PP^d(F)$ and $\LL^d(F)$, via the natural linear action on $F^{d+1}$, and preserves occurrence relations.\\
	
	To construct our Cohomology-Developed matrix, we choose $G,X,Y$ as follows:
	Let $G=GL_{d+1}(F)$, $X=\PP^d(F)$ and $Y=\LL^d(F)$. The action of $G$ on $X$ and $Y$ is 2-transitive, and in addition it breaks $X\times Y$ into two orbits: The orbit $O_{oc}$ of all occurring pairs $(v/\sim,H)$, and the orbit $O_{noc}$ of all nonoccurring pairs. For the duality map we choose to work with the standard bilinear form 
	$$\langle v,w \rangle=\sum_i v_iw_i.$$
	Then the duality map identifies $X=Y$ merely as sets, not as $G$-sets. As a $G$-set, a matrix $g\in G=GL_{d+1}(F)$ acts on $y\in Y=X$ by $y\mapsto (g^{-1})^Ty$. Therefore we shall identify $X$ and $Y$ as sets, but remember the different $G$-actions.\\
	
	We will construct a generalized weighing matrix $A\in GW(\tfrac{q^{d+1}-1}{q-1},q^d;n)$ for $n|(q-1)$ with an automorphism subgroup $G$. That is, we shall construct an monomial lift $G\to Mon(X,\mu_n)\times Mon(Y,\mu_n)$, giving rise to a weighing matrix.\\ 
	
	  Let us begin by determining the stabilizing groups $H_X$ and $H_Y$. Pick the basepoint $x_0=[1:0:0\cdots :0]\in X=\PP^d(F)$ and let $y_0=[1:0:0\cdots :0]\in Y$. The groups stabilizing $x_0$ and $y_0$ are
	\begin{align}
		 H_X\ &= \ \left\{ \begin{bmatrix}
		a & B\\ 0 &D 
		\end{bmatrix}\in GL_{d+1}(F)\ \left|\ a \text{ is } 1\times 1 \right.    \right\},\text{ and }\\
		 H_Y=H_X^T\ &= \ \left\{ \begin{bmatrix}
		a & 0\\ C &D 
		\end{bmatrix}\in GL_{d+1}(F)\ \left|\ a \text{ is } 1\times 1 \right.    \right\}
	\end{align}
	Define the homomorphisms $\psi_X$ and $\psi_Y$ as follows:
	
	\begin{align}
	\psi_X\left( \begin{bmatrix}
	a & B\\ 0 &D 
	\end{bmatrix}\right) \ &= \ \left(\frac{a}{F}\right)_n\in \mu_n, \text{ and }\\
	\psi_Y\left( \begin{bmatrix}
	a & 0\\ C &D 
	\end{bmatrix}\right) \ &= \ \left(\frac{a}{F}\right)_n\in \mu_n.
	\end{align}
	
	Next, we need to choose the coset representatives $\{g_x\}$ and $\{g_y\}$. Let $x_1=y_1=[0:1:0\cdots :0]$. We select $g_{x_0}=I=g_{y_0}$, and $$g_{x_1}=g_{y_1}=\begin{bmatrix}
	0 & 1 & 0\\
	1 & 0 & 0\\
	0 & 0 & I_{d-1}
	\end{bmatrix}.$$
	 We do not feel the need to specify explicitly the choices of $g_x,g_y$ for other $x$ or $y$, as this will be inconsequential to the discussion below. It should be commented though that the specific CDM will depend on such choice (but only up to diagonal equivalence).\\
		
	 The pair $(x_0,y_0)$ is nonoccurring, hence we can use it to check the orientability of $O_{noc}$. Clearly, the two homomorphisms agree on $H_X\cap H_Y$, and by Lemma \ref{nc}, $O_{noc}$ is orientable. In contrast, the pair $(x_0,y_1)$ is occurring. So we must ask ourselves if $\psi_X(h)=\psi_Y(g_{y_1}^{-1}hg_{y_1})$ for all $h\in H_X\cap g_{y_1}H_Yg_{y_1}^{-1}$. We have 
	$$H_X\cap g_{y_1}H_Yg_{y_1}^{-1} \ = \ \left\{ \begin{bmatrix} \alpha & \beta &\gamma\\
	                                                     0 &       a    &  0    \\
	                                                     0   &      b_2  &  c \end{bmatrix}\in GL_{d+1}(F) \ \left| \ \alpha,a,\beta \text{ are } 1\times 1\right.  \right\}.$$
    For a general element $h$ in the intersection, $\psi_X(h)=(\alpha/F)_n$, and
    $\psi_Y(g_{y_1}^{-1}gg_{y_1})=(a/F)_n$. These two numbers generally do not agree, hence  $O_{oc}$ is non orientable We see that $\mathcal A_{P,Q}(X,Y)$ is one dimensional, with a generator supported on $O_{noc}$. We have

    \begin{thm}[cf. \cite{JunTon99}, Theorem 2]\label{projj}
    	For every prime power $q$, integers $d>0$ and $1<n|(q-1)$ there exists a generalized weighing matrix 
    	$$A\in GW\left(\frac{q^{d+1}-1}{q-1},q^d;n\right).$$
    	The matrix $|A|$ is the characteristic matrix of the $G$-orbit of all $(x,y)$ such that $x$ is not contained in the hyperplane dual to $y$.
    	Moreover, every automorphism in the image of $GL_{d+1}(F)\to \PermAut(|A|)$ lifts to an automorphism of $A$.
    	
    \end{thm}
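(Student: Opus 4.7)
The plan is to use the machinery developed earlier in the section essentially as a black box. The setup given in the excerpt already identifies the homomorphisms $\psi_X, \psi_Y$ on $H_X, H_Y$ and shows that, on $X \times Y$, the only orientable $G$-orbit is $O_{noc}$ while $O_{oc}$ is non-orientable. By Theorem~\ref{cover/conj}, the pair $(\psi_X,\psi_Y)$ produces (up to $D_X \times D_Y$-conjugacy) a unique monomial cover $\widetilde{G} = G \to Mon(X,\mu_n)\times Mon(Y,\mu_n)$; equivalently, Algorithm~\ref{alg} applied with the chosen data outputs, after normalising the orbit-head value $A_{x_0,y_0}=1$ on $O_{noc}$, a well-defined $\mu_n^+$-valued matrix $A \in \mathcal A_{P,Q}(X,Y)$ whose support is exactly $O_{noc}$. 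Thus $|A|$ is the claimed characteristic matrix of the non-occurring orbit, and the first and last sentences of the theorem are handled: the $G$-action on $A$ is, by construction, the monomial cover, so every element of the image $G \to \PermAut(|A|)$ lifts to an element of $\Aut(A)\widetilde{\,}$.

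The weight count is elementary: the number of hyperplanes of $F^{d+1}$ missing a given line is $(q^{d+1}-1)/(q-1)-(q^d-1)/(q-1) = q^d$, so every row and column of $|A|$ contains exactly $q^d$ non-zero entries. Once orthogonality is established the size parameters follow immediately.

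The main work, then, is to verify $AA^* = q^d I$, and for this I would invoke Theorem~\ref{orth}(c): it suffices to show that the $(P,P)$-action of $G$ on $X \times X$ has a single orientable orbit (necessarily the diagonal). Since $G = GL_{d+1}(F)$ acts $2$-transitively on $X = \PP^d(F)$, there are exactly two $G$-orbits on $X\times X$, the diagonal $\Delta$ and its complement $\Delta^c$. The diagonal is trivially orientable. For $\Delta^c$, I would test orientability at the pair $(x_0, x_1) = ([1{:}0{:}\cdots{:}0],\,[0{:}1{:}0{:}\cdots{:}0])$ and, using the chosen $g_{x_1}$ (the permutation matrix swapping $e_1, e_2$), compute the intersection $H_X \cap g_{x_1} H_X g_{x_1}^{-1}$: it consists of block-triangular matrices whose top-left $2\times 2$ corner is diagonal $\mathrm{diag}(\alpha,a)$. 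Then $\psi_X(h) = (\alpha/F)_n$, whereas after swapping the first two rows and columns, $\psi_X(g_{x_1}^{-1} h g_{x_1}) = (a/F)_n$, and Proposition~\ref{nc} fails because $\alpha$ and $a$ are independent non-zero scalars in $F^\times$. Hence $\Delta^c$ is non-orientable, $\mathcal A_P(X)$ is one-dimensional, and Theorem~\ref{orth}(c) gives $AA^* = cI$ with $c$ determined by the row-weight computation above, namely $c = q^d$.

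The main potential obstacle is purely bookkeeping, namely checking that the non-orientability computation on $X \times X$ is correctly set up with the same coset representatives $g_{x_1}$ used for the $(P,Q)$-side, so that one may legitimately apply Theorem~\ref{orth}(c) to the single monomial cover $(P,Q)$. Apart from this, the argument is a direct application of Propositions~\ref{nc} and~\ref{prop:214} and Theorems~\ref{cover/conj} and~\ref{orth}; no explicit construction of entries of $A$ is needed.
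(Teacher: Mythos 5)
Your proposal is correct and follows essentially the same route as the paper: identify the single orientable $(P,Q)$-orbit, use $2$-transitivity of $GL_{d+1}(F)$ on $\PP^d(F)$ to get only two $G$-orbits on $X\times X$, show the off-diagonal orbit is non-orientable by comparing $\psi_X(h)=(\alpha/F)_n$ with $\psi_X(g_{x_1}^{-1}hg_{x_1})=(a/F)_n$ on $H_X\cap g_{x_1}H_Xg_{x_1}^{-1}$, and conclude via Theorem~\ref{orth}(c). The only small addition you make is spelling out the weight count $q^d$ explicitly, which the paper leaves implicit.
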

	\begin{proof}
		We have seen that $\mathcal A_{P,Q}(X,Y)=\spn (A)$ is one dimensional. Let us check $\mathcal A_P(X)$. The action of $G$ on $X$ is 2-transitive, which means that $X\times X$ breaks into two orbits: the diagonal $O_0$ and the off-diagonal $O_1$. We will see now that $O_1$ is not orientable. Take the pair $(x_0,x_1)\in O_1$. We need to compare the two numbers $\psi_X(h)$ and $\psi_X(g_{x_1}^{-1}hg_{x_1})$ for all $h\in H_X\cap g_{x_1}H_Yg_{x_1}^{-1}$. Similarly to the above,
		$$H_X\cap g_{x_1}H_Xg_{x_1}^{-1} \ = \ \left\{ \begin{bmatrix} \alpha & 0 &\gamma\\
		0 &       a    &  \delta    \\
		0   &      0  &  c \end{bmatrix}\in GL_{d+1}(F) \ \left| \ \alpha,a,\beta \text{ are } 1\times 1\right.  \right\}.$$ For a general $h$ in that group, $\psi_X(h)=(\alpha/F)_n$, and
		$\psi_Y(g_{x_1}^{-1}hg_{x_1})=(a/F)_n$, and $O_1$ is not orientable. We conclude that $\mathcal A_P(X)=\spn(I)$. But by Theorem \ref{orth}(c), $A$ is orthogonal. The claim on automorphisms is clear since $(\psi_X,\psi_Y)$ generate a monomial cover lifting the action of $G$.
		\end{proof}

	\begin{rem} \begin{itemize}
			\item[]
			\item[(a)] In the special case $d=1$, we recover the Paley conference matrix, $W(q+1,q;n)$. The construction in \S \ref{s41} of the Paley kernel is the restriction of $X$ and $Y$ to the affine line $\mathbb A^1(F) :=\PP^1(F)\setminus \{[0:1]\}$, and the margins added in Corollary \ref{c42} are the compactification at $\infty=[0:1]$.
			\item[(b)] Again for $d=1$, if we replace $GL_2(F)$ with the subgroup $SL_2(F)$, then the orientability of the off-diagonal in $X\times X$ is restored, and the same happens for the occurrence orbit (=the diagonal) in $X\times Y$. In fact, the Paley Hadamard type I matrix is $SL_2(F)$-Cohomology-Developed.
		\end{itemize}
		
	\end{rem}

    \begin{rem}
    	The Projective-Space weighing matrix constructed here depends on some choices, such as the coset representatives $g_x$ and $g_y$, and the choice of the residue symbol $\left(\tfrac{\cdot}{F}\right)_n$. If we change representatives, we get $D$-equivalent matrices. However, changing the residue symbol to another, say to  $\left(\tfrac{\cdot}{F}\right)'_n=\left(\tfrac{\cdot}{F}\right)^c_n$, $gcd(c,n)=1$, does not preserve $D$-equivalence class, as the cohomology classes (=homomorphisms) $\psi_X$ and $\psi_Y$ have been changed (see Remark \ref{rem:1} above). The resulting matrix $A'$ is $D$-equivalent to $A^{\circ \, c}$ (Hadamard power).\\
    \end{rem}

    \begin{rem}[cf. Theorem \ref{HadConj}]\label{HadProj}
    	While $A^{\circ \, c}$  is not diagonally equivalent to $A$, it is sometimes Hadamard equivalent. This happens when $F=\mathbb F_q=\mathbb F_{p^r}$, $p$ prime, is not a prime field, and $Gal(F/\mathbb F_p)\cong \ZZ/r$ is not the trivial group. The automorphism group of $\mathbb P^d(F)$ is the group generated by the linear transformations in $PGL_{d+1}(F)$ and by the Galois action on the points in this space. Both kinds of transformations generate  the group of \emph{semilinear transformations}, $G\Gamma L_{d+1}(F)$, which is a semidirect product $GL_{d+1}(F)\rtimes Gal(F/\mathbb F_p)$. This group acts on $X$ and $Y$.
    	Let $\sigma\in G\Gamma_{d+1}(F)$ be the $p$-power Frobenius. If our matrix $A$ is normalized to have $A_{x_0,y_0}=1$, and in addition we choose our coset representatives to satisfy $g_{\sigma x}=(g_x)^\sigma$ and $g_{\sigma y}=(g_y)^\sigma$, then all formulas used in Algorithm \ref{alg} are compatible with Galois, and we obtain $\sigma A=A^{\circ\, p}$. But $\sigma A$ is Hadamard equivalent to $A$ since $\sigma$ acts on $X$ and $Y$ by permutations. It should be noted that, as expected, that condition of Theorem \ref{HadConj} is not satisfied here, because $\xi:=\beta(\sigma)\in \PermAut(|A|)$ normalizes $\beta(GL_{d+1}(F))$, but $\xi\notin \beta(GL_{d+1}(F))$.
    \end{rem}

    \subsubsection{Quasiprojective weighing matrices}\label{sec:quasi}
    An example for the quasiproduct as per \S\ref{orb_mor} can be given by \emph{quasiprojective} matrices. We have the short exact sequence
    
    \be\label{projext} 1\to  F^\times \to GL_{d+1}(F) \to PGL_{d+1}(F)\to 1.\ee
    The injection $F^\times \to GL_{d+1}(F)$ is given by $a\mapsto aI_{d+1}$, and $PGL_{d+1}(F)$ is defined as the quotient. The groups $H_X,H_Y$ are defined as above. Both groups contain the group $Z=F^\times I_{d+1}$ of scalar matrices. The sequence \eqref{projext} splits when restricted to $H_X$ and $H_Y$. Indeed, any element in $PH_X$ has a unique element $\begin{bmatrix} 1 & b\\ 0 & D \end{bmatrix}$ representing it, and we let $H'_X$ be the subgroup of all such matrices. A similar definition works for $Y$. Finally, note that $\psi_X$ and $\psi_Y$ agree on scalar matrices. Thus axioms A1--A6 of quasiproducts are satisfied and for any Projective Space weighing matrix $W$ and any $F^\times$-developed matrix $T$, we may form the quasiproduct $W\boxtimes T$. Since $F^\times$ is cyclic, we have
    \newcommand{\inn}[1]{\langle #1 \rangle}
    \begin{thm}
    	For any Projective Space weighing matrix over a finite field $F$, and for any circulant weighing matrix of size $|F|-1$, the Quasiproduct $W\boxtimes T$ is defined, and is a weighing matrix.
    \end{thm}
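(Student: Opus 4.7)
The plan is to reduce the claim directly to Theorem \ref{quasi2}, whose hypothesis is almost exactly what was verified while building the Projective-Space weighing matrix. I would proceed in three stages: first verify the axioms A1--A6 of the Quasiproduct setup, then identify circulant matrices of size $|F|-1$ as $Z$-developed matrices, then apply Theorem \ref{quasi2}.

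For the first stage, the extension \eqref{projext} is exactly axiom A1 with $Z = F^\times I_{d+1}$ central in $G = GL_{d+1}(F)$. Axiom A2 holds because the parabolic subgroups $H_X$ (upper block-triangular) and $H_Y$ (lower block-triangular) each visibly contain all scalar matrices, and $\psi_X, \psi_Y$ from Section \ref{sec:proj} are the required homomorphisms. For A3 I would take
\[H'_X=\left\{\begin{bmatrix} 1 & B\\ 0 & D\end{bmatrix}\in H_X\right\},\qquad H'_Y=\left\{\begin{bmatrix} 1 & 0\\ C & D\end{bmatrix}\in H_Y\right\};\]
every element $\begin{bmatrix} a & B\\ 0 & D\end{bmatrix}\in H_X$ factors uniquely as $(aI_{d+1})\cdot\begin{bmatrix} 1 & a^{-1}B\\ 0 & a^{-1}D\end{bmatrix}$, giving $H_X\cong Z\times H'_X$; the analogous factorization handles $H_Y$. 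Axiom A4 is immediate since $\psi_X(aI_{d+1})=\psi_Y(aI_{d+1})=\left(\tfrac{a}{F}\right)_n$. Axioms A5 and A6 merely record choices of coset representatives and orbit basepoints, which can be fixed as in Section \ref{sec:proj}; only diagonal equivalence will depend on these choices.

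For the second stage, since $Z=F^\times$ is cyclic of order $|F|-1$, choosing a generator identifies $F^\times$ with $\ZZ/(|F|-1)$ as an abelian group, and a $Z$-developed matrix in the Quasiproduct setup is then precisely a circulant matrix of size $|F|-1$. Hence any circulant weighing matrix $T$ of size $|F|-1$ qualifies as $Z$-developed, so the Quasiproduct $W\boxtimes T$ is defined.

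For the third stage, I would invoke Theorem \ref{quasi2}. Its hypothesis requires that the $(P,P)$-action on $X$ have a single orientable orbit, which is exactly what was established in the proof of Theorem \ref{projj}: the off-diagonal orbit $O_1\subset X\times X$ was shown to be non-orientable, leaving only the diagonal $O_0$. Since $W\in \mathcal A_{P,Q}(X,Y)$ is a nonzero $(P,Q)$-invariant matrix and $T$ is a weighing matrix, Theorem \ref{quasi2} concludes that $W\boxtimes T$ is a weighing matrix. The only substantive work is the axiom check for A3--A4, where the splittings of the parabolics restrict cleanly over scalars and the homomorphisms agree on $Z$; I do not expect a genuine obstacle, as the projective-space construction was essentially designed so that these conditions hold.
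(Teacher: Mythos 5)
Your proof is correct and follows essentially the same path the paper takes: verify axioms A1--A6 using the central extension \eqref{projext}, the parabolic stabilizers $H_X,H_Y$, the splittings via $H'_X,H'_Y$, the agreement of $\psi_X,\psi_Y$ on scalars, and then apply Theorem \ref{quasi2} using the single-orientable-orbit fact from Theorem \ref{projj}. The paper simply states the axiom checks more tersely in the paragraph preceding the theorem before concluding ``This is a consequence of Theorem \ref{quasi2}.''
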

    \begin{proof}
    	This is the consequence of Theorem \ref{quasi2}. 
    \end{proof} 

    We call these matrices \emph{quasiprojective} weighing matrices. The quasiproduct $W\boxtimes T$ has the same order and weight as the Kronecker-product $W\tensor T$. But usually, the two matrices are not Hadamard equivalent. R. Craigen \cite{CR1995} has a beautiful construction, called the  \emph{weaving product}, which is a generalization of the Kronecker-product. This construction builds a weighing matrix from a list of smaller weighing matrices. We will show that quasiprojective matrices are not Hadamard equivalent to all Craigen's weaving products of weighing matrices with the same parameters of $W$ and $T$. Therefore quasiproducts give a new construction.\\
    
    Let us first review (the relavant part of) the weaving product. Write the matrices $W=[\mathbf{w}_1,\ldots,\mathbf{w}_r]$ and $T=[\mathbf{t}_1,\ldots,\mathbf{t}_s]^T$, where $\mathbf{w}_i$ and $\mathbf{t}_j$ are column vectors. The Kronecker-product can be described (with the appropriate indexing) as the block matrix $$W\tensor T=(\mathbf{w}_j\mathbf{t}_i^T)_{ 1\le i\le s, \ 1\le j \le r}.$$ The weaving product generalizes this block structure. Let $W_1,\ldots,W_s$ be $W(r,w_1)$ weighing matrices, and $T_1,\ldots,T_r$ be $W(s,w_2)$ weighing matrices. Write $W_i=[\mathbf{w}^i_1,\ldots,\mathbf{w}^i_r]$ and $T_j=[\mathbf{t}^j_1,\ldots,\mathbf{t}^j_s]^T$ by columns. Then the \emph{weaving product} is defined to be the block matrix
    
    $$(W_1,\ldots,W_s)\tensor_w (T_1,\ldots,T_r):=(\mathbf{w}_j^i\mathbf{t}^{jT}_i)_{1\le i\le s,\ 1\le j \le r}.$$
    It is not hard to verify that this is a weighing matrix $W(rs,w_1w_2)$.\\

    We will show by example that $W\boxtimes T$ is not Hadamard equivalent to any weaving products $(W_1,\ldots, W_s)\tensor_w (T_1,\ldots,T_r)$. For our example we take $q=11$, $W_i\in W(12,11)$ and $T_j\in W(10,9)$ be conference matrices over $\mu_2^+$. Let $U$ be the weaving product. Then $U$ and $W\boxtimes T$ are both in $W(120,99)$. Permuting the rows of $W_i$ and the columns of $T_j$ does not change the Hadamard type of $U$. Therefore we may assume that all $W_i$, and all $T_j$ are zero along their main diagonal. In particular $|U|$ is permutationally equivalent to $|W|\tensor |T|$.\\
    
    It is enough to show that $|W\boxtimes T|$ and $|W|\tensor |T|$ are not permutationally equivalent. Nothe that we take here $T$ to be a circulant $W(10,9)$ which is known to exist. Let $Q=(|W|\tensor |T|)\cdot (|W|\tensor |T|)^T$ and $Q'=|W\boxtimes T|\cdot |W\boxtimes T|^T$ be the Gram matrices. If $|W\boxtimes T|$
     and $|W\tensor T|$ were permutationally equivalent, then the sets $V(Q)$ and $V(Q')$ of entries of $Q$ and $Q'$ would be equal. Now, $Q$ is permutationally similar to $(|W|\cdot |W|^T)\tensor (|T|\cdot |T|^T)$. Hence $V(Q)$ is the set of values of the vector $(11,10)\otimes (9,8)$. That is, $V(Q)=\{99,90,88,80\}$.\\
     
     On the other hand, for the quasiproduct we may reinterpret the sets $X',Y'$ as $X'=F^2\setminus\{0,0\}$ with the usual action of $G=GL_2(F)$, and $Y'=F^2\setminus \{0,0\}$ where $g\in GL_2(F)$ acts by multiplication with $g^{-1 T}$. In the matrix $W\boxtimes T$ there are $|F^\times|=10$ orbits $\{O_z\}_{z\in F^\times}$ above the orientable orbit of $W$, $O_z$ consisting of points $(x,y)$ such that the inner product $xy^T=z$. The matrix $T$ is taken to be an $F^\times$-developed matrix, since it is circulant. Since $T$ is a conference matrix, with first row $(t_z)_z$ then one of the $t_z$, namely $t_1$ is zero, and by Definition \ref{def:quasip} the orbits participating in $W\boxtimes T$ are $O_z$ for $z\neq 0,1$. In other words the matrix $|W\boxtimes T|$ is just the characteristic matrix of the set of all pairs $(x,y)$ satisfying $xy^T\neq 0,1$. Take the row $R_1$ of $|W\boxtimes T|$, consisting of all positions $(x,y)$ with $x=(1,0)$. Similarly let the row $R_2$ be the row of all positions $(x,y)$ with $x=(0,1)$. Then $R_1R_2^T$ is the number of $y=(y_1,y_2)\in F^2\setminus\{0\}$ such that $y_1,y_2\neq 0,1$. This number is $81$, showing that $81\in V(Q')$, and $V(Q)\neq V(Q')$. Hence the quasiproduct is not Hadamard equivalent to any of the weaving products. Incidentally, this argument also shows that the sequence \eqref{projext} is non-split.

	\subsection{Grassmannian Varieties and weighing matrices} Grassmannian Varieties are natural generalizations of projective Spaces. We will show how to construct a weighing matrix whose axes are indexed by the Grassmannian, in a way that generalizes the construction of \S \ref{sec:proj}. There are two main complications over the case of Projective-Spaces. First, occurrence relations are more involved: In fact there are few different occurrence relations, depending on the dimension of the intersection of certain vector spaces, see Definition \ref{occ} below. Second, the action on the rows (and columns) is no longer 2-transitive. However, most of the arguments of \S \ref{sec:proj} will work here.\\
	
	The background material on finite Grasmannian spaces is well-known, see e.g. \cite{Zynel+2001+145+160}. We quickly go over the background to set up notation.
	\begin{defn}
		Let $F=\FF_q$ be a finite field, and $1\le k<d$ be integers. The \emph{Grassmannian Variety} with parameters $(d,k)$ over $F$ is the set
		$$ Gr(d,k,F)\ := \ \left\{\text{linear subspaces }V\subset F^d\ |\ \dim V=k\   \right\} .$$
	\end{defn}
	As before there is a notion of duality. We fix a nondegenerate bilinear form $\langle\ ,\ \rangle$ on $F^d$ and identify $Gr(d,k,F)$ with $Gr(d,d-k,F)$ by 
    
   $$ \mathrm{Duality}: V \longleftrightarrow\ W=\{x\in F^{d} \ |\ \forall v\in V,\ \langle v,x\rangle =0 \}.$$
   We will also write $W=V^D$, where $W\in Gr(d,d-k,F)$ corresponds to $V\in Gr(d,k,F)$ by duality. We have $V^{DD}=V$.\\
   
   Each point $V\in Gr(d,k,F)$ is represented by a non unique $d\times k$ matrix $A_V$ over $F$ of rank $k$, such that  the columns of $A_V$ form a basis for $V$. Two representing matrices $A_V$ and $A'_V$ differ by invertible column operations, i.e. $A_V=A'_VT$ for some $T\in GL_k(F)$. In particular $V$ has a unique representing matrix in column-reduced-echelon-form (cref).
   Denote $V=[A_V]$ the column space of $A_V$.\\
   
   The group $GL_d(F)$ acts naturally on $Gr(d,k,F)$. Namely, $g\in GL_d(F)$ acts $[A_V]$ as $[gA_V]$. With respect to duality (working with the standard bilinear form) $g\cdot V^D=((g^{-1})^TV)^D$. The occurrence relations are defined as follows:\\
   
   \begin{defn}\label{occ}
   	A pair $(V,W)\in Gr(d,k,F)\times Gr(d,d-k,F)$ has \emph{occurrence degree} $\delta$, if $\dim(V\cap W)=\delta.$ Equivalently, the occurrence degree of $(V,W^D)$ is
   	$$\delta=k-rank (\langle v_i,w_j\rangle_{i,j}),$$ for bases $\{v_1,\ldots,v_k\}\subset V$ and $\{w_1,\ldots,w_k\}\subset W$.
   \end{defn}
   The possible occurance degrees are the integers $0\le \delta\le \min(k,d-k)$. When $k=1$, $Gr(d+1,1,F)=\PP^d(F)$ and the theory reduces to that of projective spaces. We now begin the construction of the Grassmannian weighing matrices.\\
   
  The order $\# Gr(d,k,F)$ is given in terms of a \emph{Gaussian binomial coefficients}, which are defined as follows:
   \begin{gather}
   [m]_q \ := \ \frac{q^m-1}{q-1}\\
   [m]_q! \ := \ [1]_q[2]_q\cdots [m]_q\\
   \genfrac{[ }{ ]}{0pt}{}{\, m\, }{\, r\, }_q \ := \ \frac{[m]_q!}{[r]_q![m-r]_q!}
   \end{gather}
   It can be shown that (see derivation in \S\ref{sec:flag} for Flag-Varieties below)
   \be \nonumber \# Gr(d,k,F=\mathbb F_q)\ = \ \genfrac{[ }{ ]}{0pt}{}{\, d\, }{\, k\, }_q.\ee

   \begin{rem}
		The reader might notice that analogy with permutation groups. Instead of vector spaces and subspaces of a fixed dimension we can study a set and its subsets of a fixed cardinality. The general linear group is then replaced by the symmetric group. Example \ref{159} is essentially of this type, and the weighing matrix $W(15,9)$ developed later should be thought as the analog of the grassmannian weighing matrix developed here.
   \end{rem}

   Let $G=PGL_d(F)$, $X=Gr(d,k,F)$ and $Y=Gr(d,k,F)$ with the dual action: $g$ acts as $(g^{-1})^T$. Using duality $Y$ can be identified with the dual space $Gr(d,d-k,F)$ and the usual action. By standard linear algebra, the action of $G$ on $X\times Y$ breaks into $1+\min(k,d-k)$ orbits, one for each occurrence degree. The action on $X$ (and $Y$) is no longer 2-transitive. In fact, when one fixes $x_0\in X$, then $X\setminus\{x_0\}$ is not transitive. Points $x\in X$ with different intersection dimensions $\dim(x_0\cap x)$ belong to different $G$-orbits.\\
   
   From now on we fix $x_0\in X$ to be the space represented by $[I_k,0]^T\in M_{d\times k}(F)$, and let $y_0=x_0$ be the basepoint in $Y$. The stabilizers of $x_0$ and $y_0$ are
   \begin{align}
   H_X\ &= \ \left\{ \begin{bmatrix}
   A & B\\ 0 &D 
   \end{bmatrix}\in GL_{d}(F)\ \left|\ A \text{ is } k\times k \right.    \right\},\text{ and }\\
   H_Y=H_X^T\ &= \ \left\{ \begin{bmatrix}
   A & 0\\ C &D 
   \end{bmatrix}\in GL_{d}(F)\ \left|\ A \text{ is } k\times k \right.    \right\}.
   \end{align}
   We will work with the homomorphisms $\psi_X$ and $\psi_Y$ given by:
   \begin{align}
   \psi_X\left( \begin{bmatrix}
   A & B\\ 0 &D 
   \end{bmatrix}\right) \ &= \ \left(\frac{\det(A)}{F}\right)_n, \text{ and }\\
   \psi_Y\left( \begin{bmatrix}
   A & 0\\ B &D 
   \end{bmatrix}\right) \ &= \ \left(\frac{\det(A)}{F}\right)_n.
   \end{align}
   
   The orbit $O_0$ containing $(x_0,y_0)$ consists of all pairs $(x,y)\in X\times Y$, such that $y^D\cap x$ is zero dimensional. This is the `generic' orbit. $\psi_X$ and $\psi_Y$ agree on $H_X\cap H_Y$, so this orbit is orientable. The orbit $O_i$ of intersection degree $i$ contains the point $(x_0,y_i)$, where $y_i$ is the space represented by the block matrix
   $$\begin{bmatrix}
   I_{k-i} & 0_i & 0 & 0\\
   0 & 0_i & I_{i} & 0
   \end{bmatrix}^T.$$ We choose the coset representatives for $G/H_Y$:
   $$g_{y_i}= \begin{bmatrix} I_{k-i} & 0 & 0 & 0\\
   0 & 0 & I_{i} & 0\\
   0 & I_{i} & 0 & 0\\
   0 & 0 & 0 & I_{d-i-k}
   \end{bmatrix}$$ mapping $y_0$ to $y_i$. In a similar way to \S\ref{sec:proj} it is clear that $\psi_X(h)$ and $\psi_Y(g_{y_i}^{-1}hg_{y_i})$ do not agree on $h\in H_X\cap g_{y_i}hg_{y_i}^{-1}$ for $1\le i\le \min(k,d-k)$, therefore $O_0$ is the only orientable orbit in $X\times Y$. Notice that the weight of the orbit $O_0$, i.e. the number of $y$ s.t. $(x_0,y)\in O_0$ is $q^{k(d-k)}$. This is easy to see, since every such $y$ can be uniquely represented by a column-reduced echelon form matrix $[I_k,*]^T$, where $*$ represents an arbitrary submatrix, and the dimension of this space is $k\times (d-k)$.\\
   
   Similarly, we analyze $X\times X$. Again there are $1+min(k,d-k)$ orbits, according to the dimension of the intersection. $(x_1,x_2)$ and $(x_3,x_4)$ are in the same $G$-orbit, if and only if $\dim(x_1\cap x_2)=\dim(x_3\cap x_4)$. A similar analysis shows that the only orientable orbit is the diagonal. Therefore, as for projective spaces we obtain.
   
   \begin{thm}\label{grass}
   	For every prime power $q$, integers $d>k>0$, $1<n|(q-1)$, there exists a generalized weighing matrix 
   	$$A\in GW\left(\genfrac{[ }{ ]}{0pt}{}{\, d\, }{\, k\, }_q,q^{k(d-k)};n\right).$$
   	The matrix $|A|$ is the characteristic matrix of the $G$-orbit of all $(x,y)\in X\times Y$ such that $x\cap y^D=\{0\}$.
   	Moreover, every automorphism in the image of $GL_d(F)\to \PermAut(|A|)$ lifts to an automorphism of $A$.
    \end{thm}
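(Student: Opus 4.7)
My plan is to follow exactly the strategy used in the proof of Theorem \ref{projj}, adapting it to the Grassmannian setting. Most of the technical groundwork has already been laid out in the paragraphs preceding the theorem: the groups $G,H_X,H_Y$, the homomorphisms $\psi_X,\psi_Y$, the coset representatives $g_{y_i}$, and the orbit decomposition of $X\times Y$ into the orbits $O_0,O_1,\ldots,O_{\min(k,d-k)}$ indexed by the intersection degree are all specified. The remaining task is to put these ingredients together and invoke the orthogonality principle \ref{orth}(c).

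First I would write down the monomial cover associated to the pair $(\psi_X,\psi_Y)$ via Theorem \ref{cover/conj}, so that the algorithm of \S\ref{sec3} produces a candidate matrix $A\in G\mu_n(X,Y)$ with $\supp(A)=O_0$. The text already verifies that $O_0$ is the unique orientable orbit in $X\times Y$: on the representative pair $(x_0,y_0)$ one has $H_X\cap H_Y$ consisting of block-diagonal matrices $\mathrm{diag}(A,D)$, and $\psi_X,\psi_Y$ both equal $\bigl(\det(A)/F\bigr)_n$ there, while for $i\ge 1$ the conjugated homomorphism $\psi_Y\circ \mathrm{Ad}(g_{y_i}^{-1})$ picks out a different block determinant than $\psi_X$, breaking the orientability criterion of Proposition \ref{nc}. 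By Theorem \ref{orth}(a), $\mathcal A_{P,Q}(X,Y)$ is therefore one-dimensional and spanned by $A$.

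Next I need to perform the analogous orientability analysis for the $(P,P)$-action on $X\times X$. Here the $G$-orbits are again indexed by intersection degree, with representatives $(x_0,x'_i)$ where $x'_i$ is the $k$-subspace represented by $[I_{k-i},0\,|\,0,I_i]^T$ (say), and coset representatives $g_{x'_i}$ chosen by an obvious permutation matrix. For the diagonal orbit (intersection degree $k$), the stabilizer is $H_X$ and the orientability condition $\psi_X(h)=\psi_X(h)$ is trivial. For any other orbit the intersection $H_X\cap g_{x'_i}H_X g_{x'_i}^{-1}$ contains block upper-triangular matrices in which two independent $k\times k$ determinants $\det A$ and $\det A'$ appear on either side of the equation $\psi_X(h)=\psi_X(g_{x'_i}^{-1}hg_{x'_i})$, and these determinants are not forced to agree. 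Concretely, one exhibits a single element in this intersection for which the two residue symbols differ (this is where I would borrow the exact computation done for $\mathbb P^d$ in the proof of Theorem \ref{projj}, just inflated to $k\times k$ blocks). This shows $\mathcal A_P(X)=\CC\cdot I$.

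With these two facts, Theorem \ref{orth}(c) immediately gives $AA^*=cI$ for some scalar $c$, so $A$ is a weighing matrix. The weight $c$ is the number of nonzero entries in any row of $A$, namely $|\{y\in Y:(x_0,y)\in O_0\}|=|\{y\in Gr(d,k,F):y\cap x_0=\{0\}\}|$. Every such $y$ has a unique column-reduced echelon representative of the form $[*\,|\,I_k]^T$ with the $(d-k)\times k$ block $*$ arbitrary, yielding $q^{k(d-k)}$ possibilities. The size of the matrix is $|Gr(d,k,F)|=\genfrac{[}{]}{0pt}{}{d}{k}_q$. Finally, the last claim that every element in the image of $GL_d(F)\to\mathrm{PermAut}(|A|)$ lifts to an automorphism of $A$ is just a restatement of the fact that $A$ is $\widetilde G$-invariant under the constructed monomial cover. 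The main obstacle, and really the only one, is the careful bookkeeping in the orientability computation on $X\times X$; everything else is a formal application of the machinery of \S\ref{sec3} and Theorem \ref{orth}.
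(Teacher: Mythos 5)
Your proposal is correct and follows the same route as the paper: the orientability analysis on $X\times Y$ (unique orientable orbit $O_0$) and on $X\times X$ (diagonal only), the appeal to Theorem \ref{orth}(c), and the echelon-form count $q^{k(d-k)}$ are exactly the steps the paper assembles, with the $X\times X$ analysis compressed in the paper to ``a similar analysis shows\ldots'' while you spell out the block-determinant comparison. No gaps.
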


    \subsection{Flag-Varieties and weighing matrices}\label{sec:flag}
    
    \emph{Flag Varieties} are yet a further generalization of grassmannians. We still obtain weighing matrices over $\mu_n$, but only provided that $n$ is large enough. Let $\mathcal P=(d_1,d_2,\ldots,d_r)$ be a \emph{positive} {\bf ordered} \emph{partition} of $d$, which means that $d=\sum_i d_i$, $d_i>0$, and the order in the vector matters. We write $r=len(\mathcal P)$, the \emph{length} of the partition. A \emph{flag} of type $\mathcal P$ over a field $F=\mathbb F_q$ is a sequence of vector spaces
    $$\mathcal F: \ \ V_1\subset V_2 \subset \cdots\subset V_r=V:=F^d,$$ such that $\dim V_i=\sum_{j\le i} d_j$. The \emph{Flag-Variety} of type $\mathcal P$ over $F$ is the set
    $$Flag(\mathcal P,F)\ := \ \{\text{Flags } \mathcal F\ | \ \mathcal F \text{ is of type $\mathcal P$ over }F \}.$$
    The Grassmanian $Gr(d,k,F)$ is the special case where $\mathcal P=(k,d-k)$, and the Projective Space $\mathbb P^d(F)$ is the case $\mathcal P=(1,d)$.\\
    
    A flag $\mathcal F$ can be encoded by a $d\times (d-d_{r-1})$ matrix $B=B_{\mathcal F}$, such that the first $\sum_{j\le i} d_j$ columns of $B$ are a basis for $V_i$. The matrix $B$ is not unique. We may form limited column operations on $B$ in the following manner. It is possible to rescale any column of $B$. It is also possible to add a multiple of column $i$ to any column $j$ for $j>i$. But for $j<i$ this is generally forbidden, except when $j$ belongs to the \emph{same block} as $i$. This means that 
	 for some $l$, $\sum_{k\le l} d_k < i,j \le \sum_{k\le l+1} d_k$. Such operations preserve the flag. The group generated by such operations transform a basis matrix $B_{\mathcal F}$ to any other basis matrix $B'_{\mathcal F}$ of the flag $\mathcal F$. In matrix language, let $T\subset G:=GL_{d-d_r}(F)$ be the subgroup of all block-upper-triangular matrices
    $$ T:=\begin{bmatrix} A_1 & * &\cdots & *\\
                       0  & A_2 & * & *\\ 
                       \vdots & \ddots &\ddots & \vdots\\
                       0 & \cdots & 0 & A_{r-1}
    
    \end{bmatrix}$$
    such that $A_i$ has dimensions $d_i\times d_i$, $1\le i<r$. 
    We can identify the flag variety with the quotient
    
    $$Flag(\mathcal P,F)=\left\{B\in F^{d\times (d-d_r)} \\
                                \ \left| \  rank(B)=d-d_r \right. \right\}\Big/ T.$$
    The group $G=GL_d(F)$ acts by matrix multiplication on the left on basis matrices. This action descends to $Flag(\mathcal P,F)$, and is transitive. Write $X=Flag(\mathcal P,F)$, and let $x_0\in X$ be the \emph{standard} flag of type $\mathcal P$. This is the flag represented by the basis matrix $B_0=[I_{d-d_r},0]^T$. The stabilizer $H_X$ of $x_0$ in $G$ is the subgroup of all block upper triangular $d\times d$  matrices \be \label{FlagHX}\begin{bmatrix} A_1 & * &\cdots & *\\
    0  & A_2 & * & *\\ 
    \vdots & \ddots &\ddots & \vdots\\
    0 & \cdots & 0 & A_{r}\end{bmatrix},\ee
    such that $A_i$ has dimensions $d_i\times d_i$.\\
    
    We define the dual space $Y$ to be the same as $X$ as a set, but where $g\in G$ acts by $(g^{-1})^T$. We may define the dual flag variety via a nondegenarate bilinear form and our choice of $Y$ is the outcome of using the standard bilnear form and identifying via the duality map. The stabilizer of $Y\ni y_0=x_0$ is $H_Y=H_X^T=\{A^T\ |\  A\in H_X\}.$\\
    
    \subsubsection{The order of $Flag(\mathcal P,F)$} We compute the cardinality of $H_X$. The size of the general linear group is
    $$\# GL_d(\mathbb F_q)= (q-1)^dq^{d(d-1)/2}[d]_q!.$$ We need to count how many matrices $S\in H_X$ exist. The blocks $A_i$ should be in $GL_{d_i}(F)$, while the blocks above the block-main diagonal can be general matrices. Then 
    \be \label{uuu}
       \# H_X=\prod_i \#GL_{d_i}(F) \cdot q^u,
    \ee 
    where $u$ is the number of entries above the block-main diagonal. There are $\sum_i d_i^2$ entries of the block-main diagonal, hence $u=\tfrac{1}{2}(d^2-\sum_i d_i^2)$. Therefore
    \begin{multline*}
        \# H_X= \prod_i \#GL_{d_i}(F) \cdot q^{\tfrac{1}{2}(d^2-\sum_i d_i^2)}=
        q^{\tfrac{1}{2}d^2}\prod_i (q-1)^{d_i}q^{\tfrac{1}{2}(d_i^2-d_i)}[d_i]!_q q^{-\tfrac{1}{2}d_i^2}\\
        =q^{\tfrac{1}{2}(d^2-d)}(q-1)^d \prod_i [d_i]!_q.
    \end{multline*}
    Consequently,
    $$ \# Flag(\mathcal P,F)\ = \# GL_{d}(F)/\# H_X=\frac{[d]!_q}{\prod_i [d_i]!_q}=: \genfrac{[ }{ ]}{0pt}{}{\, d\, }{\, d_1,d_2,\ldots,d_r\, }_q,$$
    the \emph{multinomial Gaussian coefficients}.

    \subsubsection{The $G$-orbits.\\}
    The $G$-orbits of $X\times X$ can be described by intersection types of two flags of type $\mathcal P$. Let us explain this point. If $\mathcal F=(V_1\subset V_2 \subset\cdots \subset V_r)$ and $\mathcal F'=(V'_1\subset V'_2\subset \cdots \subset V'_r=V_r=V)$, then $(*)$ $e_{i,j}:=\dim (V_i\cap V'_j)$ is an increasing monotone function in each $i$ and $j$. In addition, $(**)$ for all $i>1$, $e_{i,j}-e_{i-1,j}$ is an increasing monotone function in $j$, and similarly $e_{i,j}-e_{i,j-1}$ is monotonely increasing in $i$. To see this notice that $e_{i,j}-e_{i-1,j}=\dim((V_i\cap V_j)/(V_{i-1}\cap V_j))=\dim((V_{i-1}+V_i\cap V_j)/V_{i-1})$ which is clearly monotone increasing in $j$.
    Also $(***)$ $e_{i,r}=e_i:=\dim V_i=\dim V'_i=e_{r,i}$. We define $e_{i,0}=e_{0,i}=0$. We call a matrix $(e_{i,j})$ of $0\le i,j\le r$ satisfying $(*)$--$(***)$, an \emph{intersection type}. 
    \begin{lem}\label{int.type}
    	Every intersection type $\{e_{i,j}\}$ satisfying $(*)-(***)$ appears as an intersection of certain two flags of type $\mathcal P$.
    \end{lem}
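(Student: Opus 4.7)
The plan is to build the two flags from a carefully indexed basis of $V=F^d$, so that the intersection dimensions can be read off combinatorially. Given an intersection type $\{e_{i,j}\}$, I first define the ``cell counts''
\[
 f_{i,j}\ := \ e_{i,j}-e_{i-1,j}-e_{i,j-1}+e_{i-1,j-1}, \qquad 1\le i,j\le r,
\]
with the boundary convention $e_{0,j}=e_{i,0}=0$. The first step is to verify that these are non-negative integers. Condition $(**)$ says that $e_{i,j}-e_{i-1,j}$ is non-decreasing in $j$, which is exactly the inequality $f_{i,j}\ge 0$. Conditions $(*)$ and $(**)$ are symmetric in $i$ and $j$ (they both express monotonicity of mixed differences), so everything is well-defined.

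Next I check two bookkeeping identities. Summing over $j$ telescopes:
\[
 \sum_{j=1}^{r} f_{i,j}\ =\ e_{i,r}-e_{i-1,r}\ =\ e_i-e_{i-1}\ =\ d_i,
\]
using $(***)$. Consequently $\sum_{i,j} f_{i,j}=\sum_i d_i=d$, so we can choose a basis $\{b_{i,j,k}\}_{1\le i,j\le r,\ 1\le k\le f_{i,j}}$ of $F^d$ indexed by triples $(i,j,k)$.

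With this basis in hand, define
\[
 V_i\ :=\ \operatorname{span}\{b_{i',j',k}\ |\ i'\le i\},\qquad V'_j\ :=\ \operatorname{span}\{b_{i',j',k}\ |\ j'\le j\}.
\]
By the identity just established, $\dim V_i=\sum_{i'\le i}d_{i'}$ and $\dim V'_j=\sum_{j'\le j}d_{j'}$, so $\mathcal F=(V_1\subset\cdots\subset V_r)$ and $\mathcal F'=(V'_1\subset\cdots\subset V'_r)$ are both flags of type $\mathcal P$. Finally, since the basis elements fall into disjoint ``cells'' indexed by $(i,j)$, one has
\[
 V_i\cap V'_j\ =\ \operatorname{span}\{b_{i',j',k}\ |\ i'\le i,\ j'\le j\},
\]
and therefore
\[
 \dim(V_i\cap V'_j)\ =\ \sum_{i'\le i}\sum_{j'\le j} f_{i',j'}\ =\ e_{i,j}
\]
by telescoping. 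This realizes the prescribed intersection type.

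The only real content is non-negativity of $f_{i,j}$, which is the defining axiom $(**)$; after that the construction is bookkeeping. The mild obstacle I anticipate is only making sure the boundary conventions on $e_{0,*}$ and $e_{*,0}$ are handled uniformly, which is why I set them to $0$ at the outset.
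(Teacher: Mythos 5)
Your proof is correct and follows essentially the same strategy as the paper's: the paper also allocates $f_{i,j}=(e_{i,l}-e_{i-1,l})-(e_{i,l-1}-e_{i-1,l-1})$ basis elements to each cell $(i,j)$ (there labeled $T_i^l$), building both flags from a single basis partitioned by cells. The only cosmetic difference is that you index the basis abstractly by triples $(i,j,k)$ while the paper picks concrete subsets of $\{1,\ldots,d\}$ by a greedy recursive choice of smallest available elements.
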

    
    \begin{proof}
    	We first prove a combinatorial version on sets. Write 
    	$\inn{n}=\{1,2,\ldots,n\}$. Let $e_i=\sum_{j\le i}d_j$. Consider the flag of sets $S_1\subset S_2\subset \cdots \subset S_r=\inn{d}$, such that $S_i=\inn{e_i}$. Define another flag of sets $T_1\subseteq T_2\subset \cdots \subset T_r=\inn{d}$ recursively as follows. Let $T_1^l\subseteq S_l\setminus S_{l-1}$ be the subset of cadinality $e_{1,l}-e_{1,l-1}$ consisting of the smallest possible numbers. Let $T_1=\bigcupdot_l T_1^l$, which clearly has $\# T_1=e_{1,r}=e_1$. We continue to define $T_2$. Let $T_2^l\subseteq S_l\setminus (S_{l-1}\cup T_1^l)$ be the set of the cardinality $(e_{2,l}-e_{2,l-1})-(e_{1,l-1}-e_{1,l-1})$ consisting of the smallest possible numbers. Notice that this number is nonnegative since the $e_{i,j}$ satisfy condition $(**)$, and on the other hand this number $\le (e_l-e_{l-1})-(e_{1,l}-e_{1,l-1})=\# S_l\setminus (S_{l-1}\cup T_1^l)$. Also note that $\#(T_l^2\cupdot T_l^1)=e_{2,l}-e_{2,l-1}$.
		Let $T_2=T_1\cupdot\bigcupdot_l T_2^l$. At the $i$th stage we proceed similarly: Let $T_i^l\subseteq (S_{l}\setminus S_{l-1})\setminus (T^l_{i-1}\cupdot \cdots T^l_1)$ be the subset of cardinality $(e_{i,l}-e_{i,l-1})-(e_{i-1,l}-e_{i-1,l-1})\ge 0$ consisting of the smallest possible numbers, and let $T_i=T_{i-1}\cupdot \bigcupdot_l T_i^l$. Note that some of the $T_i^l$'s may be empty. This constructs a flag of sets $(T_i)$ with $\# T_i\cap (S_l\setminus S_{l-1})=\# \bigcupdot_{j\le i} T_j^l\cap S_l=e_{i,l}-e_{i,l-1}$, and $\# T_i\cap S_l=\sum_{k\le l} (e_{i,k}-e_{i,k-1})=e_{i,l}$. Recall that $\e_i$ is the $i$th standard vector. Let $\mathcal F=(V_i)$ be the \emph{standard flag}, which means that $V_i=span\{\e_k, \ k\in S_i\}$ and $\mathcal G$ be the flag $(W_i)$ defined by $W_i=span\{\e_k, \ k\in T_i\}.$ Then $\mathcal F$ and $\mathcal G$ have the desired intersection type.
    \end{proof}
    We call a pair of flags $(\mathcal F,\mathcal G)$ a \emph{standard pair} if both flags are constructed from the standard basis in different orders.
    We next claim that intersection types characterize the $G$-orbits in $X\times X$.
    \begin{lem}\label{lem:610}
    	Two pairs of flags $(x_1,x_2)$ and $(z_1,z_2)$ of type $\mathcal P$ are in the same $G$-orbit, if and only if they have the same intersection type. Every $G$-orbit contains a standard pair.
    \end{lem}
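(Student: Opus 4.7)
The plan is to prove both implications and then deduce the "standard pair" statement as a corollary of Lemma \ref{int.type} combined with the reverse direction.

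For the forward direction, suppose $(x_1,x_2) = g\cdot (z_1,z_2)$ for some $g \in G = GL_d(F)$. Write $x_1 = (V_i)$, $x_2 = (V'_j)$, $z_1 = (W_i)$, $z_2 = (W'_j)$. Then $V_i = gW_i$ and $V'_j = gW'_j$, so $V_i \cap V'_j = g(W_i \cap W'_j)$. Since $g$ is invertible, dimensions are preserved, and the two pairs have the same intersection type.

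For the reverse direction, the key is to build a basis of $F^d$ adapted to the intersection lattice. Given $(x_1, x_2) = ((V_i),(V'_j))$ with intersection type $\{e_{i,j}\}$, set $U_{i,j} := V_i \cap V'_j$ (with the convention $U_{0,j} = U_{i,0} = 0$). Observe that $U_{i-1,j} \cap U_{i,j-1} = V_{i-1}\cap V'_{j-1} = U_{i-1,j-1}$, so by inclusion–exclusion
\[
\dim(U_{i-1,j} + U_{i,j-1}) = e_{i-1,j} + e_{i,j-1} - e_{i-1,j-1},
\]
and therefore the "block dimension"
\[
c_{i,j} := \dim U_{i,j} - \dim(U_{i-1,j} + U_{i,j-1}) = e_{i,j} - e_{i-1,j} - e_{i,j-1} + e_{i-1,j-1}
\]
depends only on the intersection type. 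For each $(i,j)$ choose a complement $C_{i,j}$ of $U_{i-1,j} + U_{i,j-1}$ inside $U_{i,j}$; a straightforward induction on $i+j$ shows $F^d = \bigoplus_{i,j} C_{i,j}$, and moreover $V_i = \bigoplus_{k\le i,\, l} C_{k,l}$ and $V'_j = \bigoplus_{k,\, l\le j} C_{k,l}$. Thus the pair of flags is completely determined, up to the $G$-action, by the list of block dimensions $\{c_{i,j}\}$, hence by the intersection type $\{e_{i,j}\}$.

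To finish, pick any basis of $F^d$ subordinate to the direct sum decomposition $\bigoplus_{i,j} C_{i,j}$, and similarly for $(z_1,z_2)$ using its own complements $C'_{i,j}$. Since $\dim C_{i,j} = \dim C'_{i,j} = c_{i,j}$, there exists $g \in G$ sending one basis to the other block-by-block; this $g$ sends $(x_1,x_2)$ to $(z_1,z_2)$. Finally, the last assertion of the lemma follows immediately: given any pair, compute its intersection type, invoke Lemma \ref{int.type} to produce a standard pair with the same type, and apply the reverse direction just proved.

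The only delicate point is the inductive verification that the $C_{i,j}$'s actually span $F^d$ and that $V_i, V'_j$ decompose along the sub-lattice — this is a standard but slightly fiddly modular-lattice argument relying on the identity $U_{i-1,j}\cap U_{i,j-1} = U_{i-1,j-1}$; I do not expect other obstacles.
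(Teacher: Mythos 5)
Your proof is correct and takes essentially the same route as the paper: the reverse direction builds a direct-sum decomposition of $F^d$ indexed by pairs $(i,j)$, with blocks $C_{i,j}$ that are complements of $U_{i-1,j}+U_{i,j-1}$ inside $U_{i,j}=V_i\cap V'_j$, and these coincide with the paper's double quotients $Q_{i,j}$ (indeed $(A/B)/(C/D)\cong A/(B+C)$ here since $B\cap C=D$). Deducing the ``standard pair'' claim as a corollary of Lemma~\ref{int.type} plus the reverse direction is a minor reorganization: the paper instead maps the constructed basis directly onto the standard one, proving both assertions in one stroke.
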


    \begin{proof}
    	If $(z_1,z_2)=(gx_1,gx_2)$ for some $g\in G=GL_d(F)$, then they are of the same intersection type, because $g\in G$ preserves dimensions of vector spaces and their intersections. Conversely, suppose that $(x_1,x_2)$ and $(z_1,z_2)$ have the same intersection type, $e_{i,j}$. Write $x_1=(V_i)$ and $x_2=(W_j)$. We have $e_{i,j}-e_{i,j-1}-e_{i-1,j}+e_{i-1,j-1}=\dim(W_j\cap V_i) - \dim(W_{j-1}\cap V_i)-\dim(W_j\cap V_{i-1})+\dim(W_{j-1}\cap V_{i-1})=\dim\left(Q_{i,j}\right)$ for $Q_{i,j}=\frac{(W_j\cap V_{i})/(W_{j-1}\cap V_{i})}{(W_{j}\cap V_{i-1})/(W_{j-1}\cap V_{i-1})}.$ For every $i,j$, let $B_{i,j}\subset W_j\cap V_i$ be a set of that size bijecting onto a basis of $Q_{i,j}$. The elements of $B=\bigcupdot B_{i,j}$ are linearly independent since $B_{i,j}$ projects to $\{0\}$ in $Q_{i',j'}$ for $i'<i'$ or $j'<j$. Hence $B$ is a basis for $V$. Moreover, $\bigcup_{i,k, k\le j} B_{i,j}$ is a basis for $W_j$ and $\bigcup_{j,k,k\le i} B_{i,j}$ is a basis for $V_i$. If $g$ is a linear transformation sending $B_{i,j}$ to the standard basis in some order, then $(gx_1,gx_2)$ is a standard pair, and by choosing an appropriate ordering we can make $gx_1$ be the standard flag. 
    \end{proof}

    \subsubsection{Homomorphisms and Orientability}
    
    Let $\psi_X:H_X\to \mu_n$ be defined by 
    $$\psi_X(S)=\prod_{i=1}^r \left( \frac{\det(A_i)}{F} \right)_n^i. \ \quad\quad \text{(For $S$ as in \eqref{FlagHX})}$$ Similarly $\psi_Y:H_Y\to \mu_n$ is given by $\psi_Y(S)=\psi_X(S^T).$ We claim:
    
    \begin{lem}\label{lem:flag611}
    	Suppose that $n\ge len(\mathcal P)$. Then the only orientable orbit w.r.t. $(\psi_X,\psi_X)$ in $X\times X$ is the main diagonal.
    \end{lem}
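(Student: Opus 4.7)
The plan is to check orientability orbit by orbit using Proposition \ref{nc}. By Lemma \ref{lem:610}, every $G$-orbit in $X\times X$ contains a \emph{standard pair}, so it suffices to check at representatives $(x_0,y)$ where $y$ is built from the standard basis of $F^d$ by reassigning basis vectors into blocks. Introduce block-assignment functions $f,\tau \colon \{1,\dots,d\}\to\{1,\dots,r\}$ so that $e_j$ lies in block $f(j)$ of $x_0$ and in block $\tau(j)$ of $y$; the pair $(x_0,y)$ is on the main diagonal if and only if $f=\tau$.

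For the diagonal orbit, take $g_{x_0}=g_{y_0}=I$. The stabilizer of $(x_0,x_0)$ is $H_X\cap H_Y$, which consists of block-diagonal matrices $h=\mathrm{diag}(A_1,\dots,A_r)$. On such $h$ we have $\psi_Y(h)=\psi_X(h^T)=\prod_i (\det A_i^T/F)_n^i=\prod_i (\det A_i/F)_n^i=\psi_X(h)$, so orientability holds by Proposition \ref{nc}.

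For a non-diagonal orbit I will exhibit a test element detecting non-orientability. Take $g=\mathrm{diag}(c_1,\dots,c_d)$ with $c_j\in F^\times$ arbitrary. Then $g\in H_X$ (being upper-triangular), and $g$ stabilizes $y$ in $Y$ because $(g^{-1})^T=g^{-1}$ preserves every coordinate subspace, in particular each $V_i^y$. Choose a permutation $\pi$ with $\tau\circ\pi=f$ (which exists because $|f^{-1}(i)|=|\tau^{-1}(i)|=d_i$) and set $g_y=P_\pi$; a direct check shows $(g_y^{-1})^Ty_0=y$ under the dual action, and a short calculation gives $g_y^{-1}gg_y=\mathrm{diag}(c_{\pi(1)},\dots,c_{\pi(d)})$. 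Evaluating $\psi_X(g)=\prod_j(c_j/F)_n^{f(j)}$ and $\psi_Y(g_y^{-1}gg_y)=\prod_j(c_{\pi(j)}/F)_n^{f(j)}=\prod_j(c_j/F)_n^{\tau(j)}$ (using $\tau(k)=f(\pi^{-1}(k))$) yields
\[
\frac{\psi_X(g)}{\psi_Y(g_y^{-1}gg_y)}\;=\;\prod_{j=1}^{d}\left(\frac{c_j}{F}\right)_n^{f(j)-\tau(j)}.
\]

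Since the $c_j$ vary independently in $F^\times$ and the $n$-th residue symbol surjects onto $\mu_n$, orientability of the orbit is equivalent to the congruences $f(j)\equiv \tau(j)\pmod n$ for all $j$. The hypothesis $n\ge r=\mathrm{len}(\mathcal P)$ together with $f(j),\tau(j)\in\{1,\dots,r\}$ then upgrades these congruences to honest equalities $f(j)=\tau(j)$, forcing $f=\tau$ and hence identifying the orbit with the diagonal. The main obstacle is identifying the coset representative $g_y$ as a permutation matrix compatible with the block-assignment functions and correctly tracking the conjugation $g_y^{-1}gg_y$; once this is done, the role of $n\ge r$ is transparent as exactly the threshold needed to promote a mod $n$ equality of block indices to genuine equality.
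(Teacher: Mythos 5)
Your calculation arrives at the correct final ratio $\prod_j(c_j/F)_n^{f(j)-\tau(j)}$ and the right use of $n\ge r$, but the proof as written is set up in the wrong product. The lemma concerns orientability in $X\times X$ for the $(\psi_X,\psi_X)$-action, where \emph{both} factors of the pair carry the ordinary $G$-action, and Proposition~\ref{nc} gives the criterion
\[
\psi_X(h)=\psi_X(g_y^{-1}hg_y)\qquad\text{for all }h\in H_X\cap g_yH_Xg_y^{-1},
\]
with no appearance of $H_Y$, $\psi_Y$, or the dual action. Your writeup instead takes the second factor to be $Y$ with the dual action $g\cdot y=(g^{-1})^Ty$: you compute the stabilizer of $(x_0,x_0)$ as $H_X\cap H_Y$ (block-diagonal matrices — in $X\times X$ it is just $H_X$), you verify $\psi_X=\psi_Y$ on that intersection (a needless check since both sides of the criterion coincide literally when $g_y=1$), you check that $g$ stabilizes $y$ ``in $Y$ because $(g^{-1})^T=g^{-1}$,'' and you verify $(g_y^{-1})^Ty_0=y$ rather than $g_yx_0=y$. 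This is the orientability computation for the occurrence orbit in $X\times Y$, which is what the existence theorem later needs, not the orientability computation for $\mathcal A_P(X)$ on $X\times X$ that this lemma is about.

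That this does not change the outcome is an accident of your representatives: permutation matrices satisfy $(P_\pi^{-1})^T=P_\pi$, diagonal matrices satisfy $(g^{-1})^T=g^{-1}$, and $\psi_Y(S^T)=\psi_X(S)$ makes $\psi_Y$ agree with $\psi_X$ on diagonals. Because every $Y$-decorated object in your argument happens to equal the corresponding $X$-object, the arithmetic survives the mislabelling. For comparison, the paper's proof stays inside $X\times X$ throughout: it picks a permutation matrix $\sigma=g_y$ with $\sigma x_0=y$, notes the orientability criterion is $\psi_X(h)=\psi_X(\sigma^{-1}h\sigma)$ on $H_X\cap\sigma H_X\sigma^{-1}$, takes the one-parameter test element $h=\diag(1,\ldots,t,\ldots,1)$ with $t$ at a position whose block index moves under $\sigma$, computes both sides as $(t/F)_n^i$ and $(t/F)_n^j$, and concludes from $n\ge r>|i-j|$. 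Your $\diag(c_1,\ldots,c_d)$ is a multi-parameter version of the same test (your congruence system $f(j)\equiv\tau(j)\pmod n$ specializes to the paper's single congruence $i\equiv j\pmod n$). Fix the setup to live in $X\times X$ — drop $H_Y$, $\psi_Y$, and the dual action, and replace the diagonal-orbit paragraph by the observation that it is trivially orientable since $g_{x_0}=g_{y_0}=1$ — and the proof is correct.
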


    \begin{proof}
    	Let $O\subset X\times X$ be an orbit. Then according to Lemma \ref{lem:610} there is a standard pair $(\mathcal F,\mathcal G)\in O$, where $\mathcal F=x_0$ is the basepoint. There is a permutation matrix $\sigma\in G$ such that $y=\mathcal G=\sigma x_0$. Thus orientability at $(x_0,y)$ is equivalent to the condition that 
    	$$\psi_X(h)=\psi_X(\sigma^{-1} h \sigma), \ \ \forall h\in H_X\cap \sigma H_X\sigma^{-1}.$$
    	Suppose that $O$ is not the main diagonal orbit. Then $\sigma$ is a permutation that does not preserve the block partition of $S$, or else $\sigma x_0$ would be the same flag $x_0$. So there is an entry $k$ in the $i$th block, such that $\sigma(k)$ is in the $j$th block for $j\neq i$. Take $h=\diag(1,1,\ldots,t,1,\ldots,1)\in H_X\cap \sigma H_X\sigma^{-1}$, where the $t$ is at position $k$. Then $\psi_X(h)=\left(\frac{t}{F}\right)^i_n$, and $\psi_X(\sigma^{-1}h\sigma)=\left(\frac{t}{F}\right)^j_n$. We have $n\ge len(\mathcal P)>|i-j|$. For $t$ a generator of $F^\times$,  $\psi_X(h)\neq \psi_X(\sigma^{-1} h \sigma)$, proving that $O$ is not orientable.
    \end{proof} 

    We can now state and prove the theorem for existence of Flag-Variety weighing-matrices.
    
    \begin{thm}
        Let $q$ be a prime power, $d=\sum_{i=1}^r d_i$, $d_i>0$ integers and $1<n|(q-1)$. Suppose that $n\ge r$.  Then there exists a generalized weighing matrix 
        $$A\in GW\left(\genfrac{[ }{ ]}{0pt}{}{\, d\, }{\, d_1,d_2,\ldots,d_r\, }_q,q^{\tfrac{1}{2}(d^2-\sum_i d_i^2)};n\right).$$
        The matrix $|A|$ is the characteristic matrix of the $G$-orbit in $X\times Y$ spanned by $(x_0,y_0)$.
        Moreover, every automorphism in the image of $GL_d(F)\to \PermAut(|A|)$ lifts to an automorphism of $A$.	
    \end{thm}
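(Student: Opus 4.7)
The plan is to mirror the proofs of Theorems \ref{projj} and \ref{grass}, using Lemma \ref{lem:flag611} to pin down the algebra $\mathcal A_P(X)$ and then invoking Theorem \ref{orth}(c) to obtain orthogonality automatically.

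First I would verify orientability of the $G$-orbit $O$ of $(x_0,y_0)$ in $X\times Y$ with respect to $(\psi_X,\psi_Y)$. Choosing the natural coset representatives $g_{x_0}=g_{y_0}=I$, Proposition \ref{nc} reduces orientability to checking $\psi_X(h)=\psi_Y(h)$ for all $h\in H_X\cap H_Y$. This intersection is the block-diagonal subgroup $\{\diag(A_1,\ldots,A_r)\}$, and for every such $h$ one has $h=h^T$, whence $\psi_Y(h)=\psi_X(h^T)=\psi_X(h)$. Hence $O$ is orientable, and by Theorem \ref{orth}(a) there is an orbital basis element $B\in \mathcal A_{P,Q}(X,Y)$ with $\supp(B)=O$ and entries in $\mu_n^+$, and $|B|$ is the characteristic matrix of $O$.

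Next I would invoke Lemma \ref{lem:flag611}: the assumption $n\ge r$ guarantees that the only orientable orbit of the $(P,P)$-action on $X\times X$ is the diagonal, so $\mathcal A_P(X)=\spn(I)$ is one-dimensional. Theorem \ref{orth}(c) then yields $BB^*=cI$ automatically, with no direct inner-product computation required. The order of $B$ is $|X|=\genfrac{[}{]}{0pt}{}{d}{d_1,\ldots,d_r}_q$, as established in the derivation preceding the theorem. For the weight $c$ I would identify it with the size of the $H_X$-orbit of $y_0$ in $Y$ (the number of columns $y$ for which $(x_0,y)\in O$). Under the dual action of $H_X$ on $Y$, the stabilizer of $y_0$ consists of $h\in H_X$ with $(h^{-1})^Ty_0=y_0$, i.e., $h\in H_X\cap H_Y$, again the block-diagonal subgroup. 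From $|H_X|=q^{d(d-1)/2}(q-1)^d\prod_i [d_i]!_q$ (derived in the order computation of $\#\,Flag(\mathcal P,F)$) and $|H_X\cap H_Y|=\prod_i|GL_{d_i}(F)|=(q-1)^d q^{\sum_i d_i(d_i-1)/2}\prod_i [d_i]!_q$, the ratio telescopes to $q^{(d^2-\sum_i d_i^2)/2}$, matching the claimed weight. The automorphism-lifting statement is built into the construction: the monomial cover $G\to Mon(X,\mu_n)\times Mon(Y,\mu_n)$ afforded by $(\psi_X,\psi_Y)$ provides a lift for every element of $\beta(G)\subseteq \PermAut(|A|)$.

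The conceptual heart of the argument has already been absorbed into Lemma \ref{lem:flag611}, whose role is to forbid orientability of every non-diagonal orbit in $X\times X$. Without the hypothesis $n\ge r$, a block-swapping permutation matching two blocks whose indices differ by a multiple of $n$ could survive the test of Proposition \ref{nc}, enlarging $\mathcal A_P(X)$ and breaking the orthogonality of $B$; once that obstruction is neutralized, the remainder is a direct application of the Section 5 machinery, and the only bookkeeping is the weight calculation above.
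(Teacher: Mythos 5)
Your argument follows the paper's proof essentially verbatim: orientability of the orbit of $(x_0,y_0)$ via Proposition \ref{nc}, then Lemma \ref{lem:flag611} to force $\mathcal A_P(X)=\spn(I)$ under the hypothesis $n\ge r$, then Theorem \ref{orth}(c) for orthogonality, and finally the index computation $\#H_X/\#(H_X\cap H_Y)=q^{\frac12(d^2-\sum d_i^2)}$ for the weight. One small slip in the orientability check: you assert that every block-diagonal $h=\diag(A_1,\ldots,A_r)$ satisfies $h=h^T$, which is false unless each $A_i$ is symmetric. Fortunately the conclusion you need, $\psi_Y(h)=\psi_X(h^T)=\psi_X(h)$, still holds, because $\psi_X$ depends on $h$ only through $\bigl(\det A_i\bigr)_i$ and $\det(A_i^T)=\det(A_i)$; this is the reason the paper states the equality of $\psi_X$ and $\psi_Y$ on $H_X\cap H_Y$ directly rather than via a symmetry of $h$.
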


    \begin{proof}
    	We construct the $H^1$-monomial cover of $G=GL_d(F)$ with respect to the characters $\psi_X$ and $\psi_Y$. Let $O$ be the orbit spanned by $(x_0,y_0)=(x_0,x_0)$. The orientability of $O$ is established by the fact that $\psi_X(S)=\psi_Y(S)=\prod_i\left( \tfrac{\det(A_i)}{F} \right)_n^i$ for all $S\in H_X\cap H_Y$. Let $A$ be the Cohomology-Developed matrix w.r.t. to this monomial cover, supported on $O$ and normalized so that $A_{x_0,y_0}=1$. By our assumption that $n\ge r$, Lemma \ref{lem:flag611} and Theorem \ref{orth}(c), $A$ is an orthogonal matrix, hence a weighing matrix.\\
    	
    	It remains to determine its weight. The stabilizer of $(x_0,y_0)$ is $H_X\cap H_Y$. Thus the weight is the number of $(x_0,y)\in O$, that is
    	\begin{multline*}
    	\# H_X/ \# (H_X\cap H_Y)= \# H_X/\prod_i \# GL_{d_i}(F)=q^u \text{ (Eq. \eqref{uuu}) }= q^{\tfrac{1}{2}(d^2-\sum d_i^2)}. 
    	\end{multline*}
    \end{proof}

    \begin{rem}
    	Similarly to \S \ref{sec:quasi}, as for projective space matrices, we would like to construct quasiproducts $W\boxtimes T$ when $W$ is a grassmannian of flag variety weighing matrix. This is possible provided that the sequence \eqref{projext} splits when restricted to $H_X$ and $H_Y$. This is not always true, but in many cases it is, such as when one of the $d_i$, of any partial sum of these is coprime to $d$.
    \end{rem}

	\section{$H^2$-developed matrices}
	We turn to the study of $H^2$-developed matrices. This section contains more advanced material on cohomology, and the non-familiar reader may skip this section in a first reading. The entire set of Cohomology-Developed Matrices is covered by the $H^1$-development discussed above, if one is willing to replace $G$ with an extension group $\widetilde{G}$ in a monomial cover.\\

	Recall that we construct Cohomology-Developed matrices (CDM) by computing monomial covers 
	\begin{equation}\label{diag7} \begin{tikzcd}[column sep = huge, row sep=huge]
	\ \ \widetilde{G}\ \  \arrow[dashed,two heads,shorten >= 5pt,shorten <= 5pt]{r}{\rho}  \ar[dashed]{d}{\pi_X\times \pi_Y}
	&\ \ G \ \ \ar{d}{\ p_X\times p_Y}  \ar[dotted,swap]{dl}{\exists \theta ?} \\
	Mon(X,\mu_n)\times Mon(Y,\mu_n)  \arrow{r}{abs\times abs} & Perm(X)\times Perm(Y)
	\end{tikzcd},
	\end{equation}
	which supplies us with an action of $\widetilde{G}$ on $X\times Y$ matrices, and CDMs are those that are invariant under $\widetilde{G}$. In the case of $H^1$-development, the map $\widetilde{G}\to G$ is an isomorphism, or put it more generally, there is a homomorphism $\theta:G\to Mon(X,\mu_n)\times Mon(Y,\mu_n)$ which makes the diagram commutative. The $\widetilde{G}$-action descends to the $G$-action via $\theta$, and for all purposes we can replace $\widetilde{G}$ with $G$.\\
	
	In the general case, however, the diagonal arrow $\theta$ may not exist, and $\widetilde{G}\to G$ is a proper extension. An example has been given above in Example \ref{Had2}. Cocyclic matrices, to be discussed below (\S\ref{subsec:coc}), are examples as well. By the definition monomial covers, the kernel $\widetilde{G}\to G$ is mapped by $\pi_X\times \pi_Y$ to $Triv$. Let $N\trianglelefteq \widetilde{G}$ be the normal subgroup $\ker (\rho) \cap \ker (\pi_X\times \pi_Y)$. Then we may replace  $\widetilde{G}$ with $\widetilde{G}/N$ in the diagram, and the monomial action descends to $\widetilde{G}/N$. Hence without loss of generality we will assume that $N=\{1\}$. A monomial cover satisfying this condition is called \emph{restricted}. In a restricted monomial cover, $\ker(\rho)$ \emph{injects} into $Triv$. Moreover, $\ker(\rho)$ is \emph{central} in $\widetilde{G}$, since if we take $z\in \ker(\rho)$ and $g\in \widetilde{G}$, the commutator $[z,g]$ is mapped by $\pi_X\times\pi_Y$ to a commutator with $Triv$, which is the identity, hence to commutator is in $N=\{1\}$. Thus we obtain a central extension sequence
	\be\label{ext7}  1\to Z\to \widetilde{G} \to G \to 1,\ \  Z=\ker(\rho). \ee 
	
	\begin{lem}
		For restricted monomial covers, The following are equivalent:
		\begin{itemize}
			\item[(i)] A homomorphism $\theta$ exists.
			\item[(ii)] $Z=\{1\}$
			\item[(iii)] The extension \eqref{ext7} is split.
		\end{itemize}
		
	\end{lem}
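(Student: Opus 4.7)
I would prove the equivalences via a cycle of implications, with the restricted hypothesis doing the essential work in the non-trivial directions.

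The implications $(\mathrm{ii})\Rightarrow(\mathrm{iii})$ and $(\mathrm{ii})\Rightarrow(\mathrm{i})$ are immediate: if $Z=\{1\}$, then $\rho$ is an isomorphism, so the extension \eqref{ext7} splits trivially with section $\rho^{-1}$, and $\theta:=(\pi_X\times\pi_Y)\circ\rho^{-1}$ makes diagram \eqref{diag7} commute by construction. For $(\mathrm{i})\Rightarrow(\mathrm{ii})$, I would use commutativity of the upper triangle $\theta\circ\rho=\pi_X\times\pi_Y$: restricting to $Z=\ker\rho$ forces $(\pi_X\times\pi_Y)|_Z$ to be trivial, while the restricted hypothesis $N=\ker\rho\cap\ker(\pi_X\times\pi_Y)=\{1\}$ is equivalent to $(\pi_X\times\pi_Y)|_Z$ being injective into $Triv$; these together give $Z=\{1\}$.

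For $(\mathrm{iii})\Rightarrow(\mathrm{ii})$, given a section $s:G\to\widetilde{G}$ of $\rho$, I would form the candidate $\theta:=(\pi_X\times\pi_Y)\circ s$. The lower triangle commutes automatically since $\rho\circ s=\mathrm{id}_G$. For the upper triangle, centrality of $Z$ (established just above the lemma) allows every $\tilde{g}\in\widetilde{G}$ to be written as $\tilde{g}=z\cdot s(\rho(\tilde{g}))$ with $z\in Z$, so $(\pi_X\times\pi_Y)(\tilde{g})$ and $\theta(\rho(\tilde{g}))$ differ by the factor $(\pi_X\times\pi_Y)(z)\in Triv$; requiring this factor to be trivial for all $z\in Z$ and invoking the restricted injectivity of $(\pi_X\times\pi_Y)|_Z$ forces $Z=\{1\}$, closing the cycle.

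The main obstacle is the direction $(\mathrm{iii})\Rightarrow(\mathrm{ii})$: a bare splitting of the central extension does not on its own force $Z=\{1\}$, and one must exploit the interplay between the section, the centrality of $Z$, and the restrictedness to collapse the potential obstruction $(\pi_X\times\pi_Y)(Z)\subseteq Triv$ down to the identity. Once this is done, the three conditions coincide and the $H^2$-cover genuinely reduces to an $H^1$-cover in the sense of Section~\ref{sec3}.
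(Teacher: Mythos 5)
Your implications $(\mathrm{ii})\Rightarrow(\mathrm{i})$, $(\mathrm{ii})\Rightarrow(\mathrm{iii})$, and $(\mathrm{i})\Rightarrow(\mathrm{ii})$ are correct, and the last of these is essentially the paper's argument. The gap is in $(\mathrm{iii})\Rightarrow(\mathrm{ii})$, which as written is circular. Assuming $(\mathrm{iii})$ you are handed only a group-theoretic section $s$ of $\rho$, with no constraint linking $s$ to $\pi_X\times\pi_Y$. You form $\theta=(\pi_X\times\pi_Y)\circ s$ and correctly compute, using centrality of $Z$, that $(\pi_X\times\pi_Y)(\tilde g)$ and $\theta(\rho(\tilde g))$ differ by a factor $(\pi_X\times\pi_Y)(z)\in Triv$ with $z\in Z$. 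But your next sentence, ``requiring this factor to be trivial for all $z\in Z$ \ldots forces $Z=\{1\}$,'' begs the question: assumption $(\mathrm{iii})$ does not entitle you to \emph{require} anything. The vanishing of that factor is precisely commutativity of the upper triangle in diagram \eqref{diag7}, i.e.\ condition $(\mathrm{i})$, which is not available here. Worse, the restricted hypothesis --- injectivity of $\pi_X\times\pi_Y$ on $Z$ --- guarantees that this factor is \emph{non}trivial whenever $z\neq1$, so nothing forces $Z$ to collapse. In effect you have shown $(\mathrm{iii})\wedge(\mathrm{i})\Rightarrow(\mathrm{ii})$, which is no improvement over your own $(\mathrm{i})\Rightarrow(\mathrm{ii})$; your closing paragraph senses this (``once this is done'') but the step is never actually carried out.

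The paper closes its cycle with $(\mathrm{iii})\Rightarrow(\mathrm{i})$ rather than $(\mathrm{iii})\Rightarrow(\mathrm{ii})$: given a section $s$, it sets $\theta=(\pi_X\times\pi_Y)\circ s$ and declares that to be the desired arrow, treating this as the easy leg and placing the substantive work in $(\mathrm{i})\Rightarrow(\mathrm{ii})$, with $(\mathrm{ii})\Rightarrow(\mathrm{iii})$ dismissed as trivial. Replacing your $(\mathrm{iii})\Rightarrow(\mathrm{ii})$ with the paper's $(\mathrm{iii})\Rightarrow(\mathrm{i})$ is the minimal repair. Your uneasiness about this direction is worth retaining, though: the thing you actually need to pin down, before any cycle through $(\mathrm{iii})$ can close, is exactly which commutativity conditions are packaged into ``the arrow $\theta$ exists,'' since that is what determines whether the one-line definition of $\theta$ above discharges $(\mathrm{i})$ or merely reproduces the same obstruction you identified.
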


    \begin{proof}
    	(i)$\implies$ (ii): 
    	suppose that $\theta$ exits. An element in $\ker \rho$ is mapped to $1$ in $G$, and by the commutativity of the diagram it is also mapped to $1$ in $Mon(X,\mu_n)\times Mon(Y,\mu_n)$. Therefore $Z\cong \ker(\rho)\subseteq N=\{1\}$.\\
    	(iii)$\implies$(i): If a section $s:G\to \widetilde{G}$ exists, then define $\theta=(\pi_X\times \pi_Y)\circ s$.\\
    	(ii)$\implies$(iii): Trivial. 
    \end{proof}
    The equivalent conditions in the lemma characterize the case of $H^1$-development. The general case is called $H^2$-development.\\
    
    There are two equivalent methods to construct $H^2$-developed matrices.
    \begin{itemize}
    	\item[Method I:] Compute extensions \eqref{ext7} and then replace $G$ with a cover $\widetilde{G}$ acting permutationally on $X$ and $Y$. The construction will proceed by $H^1$-development with respect to $\widetilde{G}$, using machinery of \S 3 to compute CDMs. But beware that not all extensions $\widetilde{G}$ will work. In any suitable extension, $Z$ must be mapped to $Triv$ by the constructed monomial cover. We will see below that this is a significant constraint.\\
    	\item[Method II:] Try to construct all restricted monomial covers \eqref{diag7} directly from $G$, using Cohomology.
    \end{itemize}
     In both methods, we will need to use the second cohomology group of $G$. We will proceed below with Method II, and at certain points we will explain the equivalents with Method I.\\
     
     In a restricted monomial cover, we can form a map $\tilde s:G \to \widetilde{G}$, which is a section to $\rho$, but in general not a homomorphism. This map yields a function $\tilde\theta:G\to Mon(X,\mu_n)\times Mon(Y,\mu_n)$, which descends to a \emph{homomorphism} $\bar\theta: G\to Mon(X,\mu_n)\times Mon(Y,\mu_n)/Triv$. In fact the homomorphism $\bar\theta$, is all that we need in order to generate CDMs. This is due to the fact that $Triv$ acts trivially on matrices, and $\ker\rho$ maps into $Triv$. Moreover, this data is sufficient to generate a monomial cover accounting for the monomial action, as is guaranteed in lemma \ref{without cover}. There is the split exact sequence 
     $$1 \to (D_X\times D_Y)/Triv \to (Mon(X,\mu_n)\times Mon(Y,\mu_n))/Triv\to Perm(X)\times Perm(Y)\to 1.$$

      By Theorem \ref{class_sect}, homomorphisms $\bar\theta:G\to Mon(X,\mu_n)\times Mon(Y,\mu_n)/Triv$ satisfying $(abs\times abs)\circ \bar\theta=p_X\times p_Y$, identified up to diagonal conjugation, are in bijection with the $1$st cohomology group $H^1(G,(D_X\times D_Y)/Triv) \cong H^1(G,(\mu_n[X]\oplus \mu_n[Y])/\mu_n)$. Here we view $\mu_n$ as the subgroup of $\mu_n[X]\oplus \mu_n[Y]$ via the diagonal embedding $\zeta \mapsto (\sum_x\zeta [x],\sum_y \zeta [y]).$ There is the natural map
    
    \be\label{51} H^1(G,\mu_n[X]\oplus \mu_n[Y]) \to H^1(G,(\mu_n[X]\oplus \mu_n[Y])/\mu_n)\ee
    induced from the surjection of $G$-modules  $\mu_n[X]\oplus \mu_n[Y]\to (\mu_n[X]\oplus \mu_n[Y])/\mu_n$. The $H^2$-developed matrices which are not $H^1$-developed  are coming from classes in the range of \eqref{51}, which are not in the image of the source. We will see below that the cokernel of this map is measured by the second cohomology group $H^2(G,\mu_n)$.

    Consider the exact sequence of $G$-modules,
    \be\label{exactm} 0\to \mu_n \to \mu_n[X]\oplus \mu_n[Y]\to (\mu_n[X]\oplus \mu_n[Y])/\mu_n \to 0. \ee
    
    Then the map \eqref{51} can be extended to the long exact sequence in cohomology:
    
    \begin{multline}\label{bound} H^1(G,\mu_n[X]\oplus \mu_n[Y]) \to H^1(G,(\mu_n[X]\oplus \mu_n[Y])/\mu_n)\\ \stackrel{\partial}{\longrightarrow} H^2(G,\mu_n)\to H^2(G,\mu_n[X]\oplus \mu_n[Y]).
    \end{multline}
    
    Using the lemma of Eckmann Shapiro (Theorem \ref{shap}), we can rewrite this as
    
    \begin{multline}\label{59} Hom(H_X,\mu_n)\oplus Hom(H_Y,\mu_n) \to H^1(G,(\mu_n[X]\oplus \mu_n[Y])/\mu_n)\\ \stackrel{\partial}{\longrightarrow} H^2(G,\mu_n) \stackrel{res}{\longrightarrow} H^2(H_X,\mu_n)\oplus H^2(H_Y,\mu_n),
    \end{multline}
    and the map $res: H^2(G,\mu_n)\to H^2(H_X,\mu_n)\oplus H^2(H_Y,\mu_n)$ is defined by the restriction maps to $H_X$ and $H_Y$.\\

    \subsection{Construction of CDMs - Method II}\label{cdm2}
	The sequences \eqref{bound} or \eqref{59} give us a way to compute the middle cohomology group $H^1(G,(\mu_n[X]\oplus \mu_n[Y])/\mu_n)$, which is what we need to construct monomial covers of the combinatorial action $g\in G \mapsto (p_X(g),p_Y(g))$.
    We begin with a 2-cocycle $w\in Z^2(G,\mu_n)$, representing a cohomology class $[w]\in H^2(G,\mu_n)$. It is necessary that $res([w])=0$. If so, the exactness of the sequence \eqref{59} allows us to pull back (non-uniquely) to a 1-cocycle $z\in Z^1(G,(\mu_n[X]\oplus \mu_n[Y])/\mu_n)$. The cohomology class $[z]$ is not uniquely determined. There still remains the ambiguity of the two characters in $(\psi_X,\psi_Y)\in Hom(H_X,\mu_n)\oplus Hom(H_Y,\mu_n)$ that contribute to $[z]$. All in all, the class $[\omega]$ and the two chacacters $\psi_X,\psi_Y$ are exactly the information needed to construct the class $[z]$.\\	
	
	Next, given the cocycle $z$, we construct a homomorphism $s:G\to Mon(X,\mu_n)\times Mon(Y,\mu_n)/Triv$ by lifting $z$ to a function $\tilde z:G\to D_X\times D_Y$ and then letting $s(g)=\tilde z(g)\cdot (p_X(g),p_Y(g))$. The function $s$ gives a well-defined monomial action of $G$ on matrices, by the pair of monomial matrices $s(g)=(P(g),Q(g))$. This action is well defined, and moreover is a left $G$-action, since $Triv$ acts trivially on matrices. We now use this new action to construct CDMs. Those are matrices that are $G$-invariant  w.r.t. this action. To give such a matrix $A$, it is enough to determine its value at a single point in each $G$-orbit, and then use the $G$-action to determine the values of $A$ at the remaining positions. Some orbits will be non-orientable for this action, as can happen with $H^1$-development.

    \subsection{$2$-cocycles and extensions. Comparison of Methods I and II}
    
    To the central extension $\mathcal E$ in \eqref{ext7}, there is associated a cohomology class in $H^2(G,Z)$. This works as follows. Construct a map $s:G\to \widetilde{G}$, which is a section to $\rho$, but may fail to be a homomorphism. Then (in multiplicative syntax), 
    $$\omega(g_1,g_2):= s(g_1g_2)s(g_1)^{-1}s(g_2)^{-1}$$ is a 2-cocycle, and we take its class $[\omega]$ as the associated cohomology class in $H^2(G,Z)$.\\
    
    Conversely, a 2-cocycle $\omega\in Z^2(G,Z)$ gives rise to a central extension 
    $$\mathcal E_\omega: 1\to Z \to E \to G \to 1,$$
    given by the following recipe. As a set, $E=Z\times G$ with the natural inclusion map $Z\to Z\times \{1\}\subseteq Z\times G$ and the projection map $Z\times G\to G$. The group law on $E$ is given by
    $$(z_1,g_1)\cdot (z_2,g_2):= (z_1z_2\omega(g_1,g_2),g_1g_2),$$
    
    It is well-known (see \cite[Chap IV]{1982cohomology}) that this gives a bijection between $H^2(G,Z)$ and central extensions of $G$ by $Z$, up to isomorphism. By an isomorphism of extensions we mean an isomorphism between the middle terms, which together with the identity maps on $Z$ and $G$ commutes with the arrows of the diagrams.\\
    
    In method I, we start from an extension $\mathcal E_\omega$ of $G$ by $Z$. Then we attempt to fit this extension within a restricted monomial cover. From this we obtain a monomial action of $\widetilde{G}=E$ on matrices. The following Proposition is the basis for the comparison between methods I and II.
    
    \begin{prop}\label{prop:iota} Let 
    	$$1\to Z\to \widetilde{G} \to G \to 1$$ be the extension given by a restricted monomial cover, and let $[\omega]\in H^2(G,Z)$ be the associated cohomology class. Let $s:G\to \widetilde{G}$ be a section to $\rho$, and let $\bar\theta=(\pi_X\times \pi_Y)\circ s \mod Triv$ be a homomorphism $\bar\theta: G\to Mon(X,\mu_n)\times Mon(Y,\mu_n)/Triv$. Finally let $z\in Z^1(G,\mu_n[X]\oplus \mu_n[Y]/\mu_n)$ be the $1$-cocycle corresponding to $\bar\theta$, and let $\iota:Z\to \mu_n\cong Triv$ be the natural inclusion. Then
    	$$\partial [z] =\iota_*[\omega].$$
    \end{prop}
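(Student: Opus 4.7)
The plan is to exhibit explicit $2$-cocycle representatives for both $\partial[z]$ and $\iota_*[\omega]$ and to show that, with compatible choices of sections, the two cocycles coincide on the nose. First I would recall the two standard explicit formulas. Given a set-theoretic section $s: G \to \widetilde{G}$ of $\rho$, the class $[\omega] \in H^2(G, Z)$ is represented by the $2$-cocycle $\omega(g_1, g_2)$ defined by $s(g_1) s(g_2) = \iota_Z(\omega(g_1, g_2)) \cdot s(g_1 g_2)$, where $\iota_Z: Z \hookrightarrow \widetilde{G}$ is the inclusion. On the other side, the boundary map $\partial$ coming from the short exact sequence \eqref{exactm} is computed at the cocycle level by lifting $z$ to a set-theoretic function $\tilde z: G \to D_X \times D_Y = \mu_n[X] \oplus \mu_n[Y]$ and forming $\tilde\omega(g_1,g_2) := \tilde z(g_1) \cdot g_1\tilde z(g_2) \cdot \tilde z(g_1 g_2)^{-1}$, which a priori takes values in $D_X \times D_Y$ but in fact lands in the kernel $\mu_n \cong Triv$ precisely because $z$ is a cocycle in the quotient.

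The key step is to make both computations use the same section. Starting from the section $s$ in the hypothesis, I would define the lift
\[\tilde z(g) := (\pi_X \times \pi_Y)(s(g)) \cdot s_0(g)^{-1} \in D_X \times D_Y,\]
where $s_0$ is the unsigned monomial cover. By the very definition of $\bar\theta$ and the correspondence of Theorem \ref{class_sect}, this $\tilde z$ reduces mod $Triv$ to the $1$-cocycle attached to $\bar\theta$, i.e.\ to $z$. Now I would apply $\pi_X \times \pi_Y$ to the defining extension relation $s(g_1) s(g_2) = \iota_Z(\omega(g_1,g_2)) \cdot s(g_1 g_2)$, substitute $(\pi_X \times \pi_Y)(s(g)) = \tilde z(g) s_0(g)$, and exploit two elementary identities: $s_0(g_1) \tilde z(g_2) s_0(g_1)^{-1} = g_1 \tilde z(g_2)$ (conjugation of a diagonal matrix by the corresponding permutation matrix is exactly the $G$-action on $\mu_n[X] \oplus \mu_n[Y]$) and $s_0(g_1) s_0(g_2) = s_0(g_1 g_2)$ (since $s_0$ is a homomorphism). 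Using also that the composition $(\pi_X \times \pi_Y) \circ \iota_Z$ is exactly the natural inclusion $\iota: Z \to Triv$ (this is where restrictedness of the monomial cover enters, so that $\iota_Z$ is injective with image in $Triv$), the relation becomes
\[\tilde z(g_1) \cdot g_1 \tilde z(g_2) \cdot s_0(g_1 g_2) = \iota(\omega(g_1,g_2)) \cdot \tilde z(g_1 g_2) \cdot s_0(g_1 g_2).\]
Cancelling the common factor $s_0(g_1 g_2)$ yields $\tilde\omega(g_1,g_2) = \iota(\omega(g_1,g_2))$ as an equality of functions $G \times G \to Triv$.

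Since the two $2$-cocycles are literally equal, their cohomology classes agree, giving $\partial[z] = \iota_*[\omega]$. I expect the only real technical subtlety to be keeping the multiplicative/additive conventions and left/right coboundary conventions aligned between the two definitions; once a single convention is chosen and used consistently, the whole argument collapses to the one-line manipulation of the extension relation described above. No choice was ever made independently: both sides ultimately come from the same section $s$, which is what forces the equality rather than merely cohomologous representatives.
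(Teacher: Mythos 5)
Your proof is correct, and it fills in exactly the verification that the paper's author declared ``tedious but straightforward'' and left to the reader. The approach is what the paper intends by ``checking the definitions,'' but your formulation is streamlined in a way worth noting: by choosing the lift $\tilde z(g) = (\pi_X\times\pi_Y)(s(g))\cdot s_0(g)^{-1}$ already built from the \emph{same} section $s$ that defines $\omega$, you apply the homomorphism $\pi_X\times\pi_Y$ to the extension relation $s(g_1)s(g_2)=\iota_Z(\omega(g_1,g_2))s(g_1g_2)$ once, use $s_0(g_1)\tilde z(g_2)s_0(g_1)^{-1}=g_1\tilde z(g_2)$ and $s_0(g_1)s_0(g_2)=s_0(g_1g_2)$, and the two cocycle representatives become literally equal rather than merely cohomologous. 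This avoids any need to later adjust by a coboundary, making the ``tedious'' part essentially disappear. One small point to confirm when writing this up is the sign and ordering convention in the boundary formula from \eqref{long}; since the target $\mu_n$ and the diagonal groups are abelian and the module action is by conjugation with permutation matrices, the factors commute and your reordering of $\tilde z(g_1)\cdot g_1\tilde z(g_2)\cdot\tilde z(g_1g_2)^{-1}$ matches the paper's $f_1 z(f_2)-z(f_1f_2)+z(f_1)$ written multiplicatively, so there is no discrepancy.
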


    \begin{proof}
    	This is a straightforward but tedious checking of definitions. The details are left to the reader.
    \end{proof}

    \begin{rem}
    	Suppose that we use method II, starting with a class $[\omega]\in H^2(G,\mu_n)$ (perhaps with values in $Z\subseteq \mu_n)$). Then method II constructs a well-defined monomial action on matrices, and we obtain matrices $A$ that are invariant under this action. Such matrices satisfy that $gA\sim_D A$ for $g\in G$ (with the usual action), and Lemma \ref{without cover} assures that $A$ comes from a (restricted) monomial cover. The reader may check that this monomial cover is isomorphic to the extension $\mathcal E_\omega$.  
    \end{rem}
     
     \begin{rem} 
     	Not every central extension of $G$ by $Z$ can yield a monomial cover. A necessary and sufficient condition, given by \eqref{59}, is that the cohomology class $\iota_*[\omega]\in H^2(G,\mu_n)$ will restrict to $0$ in $H^2(H_X,\mu_n)$ and $H^2(H_Y,\mu_n)$. In the case of Cocyclic matrices discussed below (\S\ref{subsec:coc}), this condition becomes trivial because $H_X=H_Y=\{1\}$. When the extension fails to satisfy the condition, Method I will fail by that $Z$ will not act trivially on matrices.
     \end{rem} 
 
     \begin{rem}
     	In the exact sequence \eqref{59} the case of $H^1$-development is precisely the case where $[\omega]=0$.
     \end{rem}

    \subsection{Cocyclic matrices and cocyclic development}\label{subsec:coc} Cocyclic matrices are the among the simplest examples of $H^2$-developed CDMs. The theory of cocyclic matrices has its origin in the theory of multidimensional combinatorial designs and has been proved later as a useful construction for Hadamard and weighing matrices, see \cite{HoDel93} and \cite{deLauney:2000}. Let $G$ be a finite group and let $\mathcal O\subset G\times G$ be a $G$-stable irreducible subset. If $\omega \in Z^2(G,\mu_n)$ is a 2-cocycle, then a \emph{pure} cocyclic matrix is the matrix $C=C(\omega)$ given by $$C_{x,y}=\omega(x^{-1},y),$$ whenever $(x,y)\in \mathcal O$, and $C_{x,y}=0$ otherwise. A general cocyclic matrix is a matrix of the form $D=K\circ C$, where $C$ is pure cocyclic and $K$ is $G$-developed.\\

    \begin{rem}
    	Notice that our definition is different from the definition appearing in the literature, e.g. as in \cite[Definition 3.1]{deLauney:2000} where it was defined as $C_{x,y}=\omega(x,y)$. The two definitions are Hadamard equivalent, by the transformation $x\mapsto x^{-1}$ in the $X$-axis. The reason for our choice will be apparent below in Theorem \ref{591}.
    \end{rem} 

    In this setting we let $X=Y=G$, where $G$ acts by multiplication from the left.
	Let $A=K\circ C(\omega)$ be a cocyclic matrix. Then using the cocycle condition \eqref{coc3}, for any $g\in G$,
    \begin{multline}\label{autc}
        (gA)_{x,y}=A_{g^{-1}x,g^{-1}y}=K_{g^{-1}x,g^{-1}y}\omega(x^{-1}g,g^{-1}y)\\
        =K_{x,y}\omega(x^{-1},y)\omega(g,g^{-1}y)\omega(x^{-1},g)^{-1}
        =\omega(g,g^{-1}y)A_{x,y}\omega(x^{-1},g)^*.
    \end{multline} 
    and this can be rewritten as $A=P(g)AQ(g)^*$, for the monomial\\ $Q(g)=\diag(\omega(g,g^{-1}y)^*)p_Y(g)$ and $P(g)=\diag(\omega(x^{-1},g)^*)p_X(g)$. In particular, $A$ is a CDM with respect to the above action.\\
    
    On the other hand, we claim that the map $g\mapsto (P(g),Q(g))$ induces a homomorphism $G\to Mon(X,\mu_n)\times Mon(Y,\mu_n)/Triv$. Indeed, by the irreducibility of $\mathcal O$, the equation $P_1AQ_1^*=P_2AQ_2^*$ for monomial $P_i,Q_i$ implies that $(P_1,Q_1)=(P_2,Q_2)\mod Triv$. In turn, the fact that $A=P(g)AQ(g)^*$ for all $g\in G$ implies that $g\mapsto (P(g),Q(g))\mod Triv$ is a homomorphism.\\
    
    This establishes a class $c(A)\in H^1(G,(\mu_n[X]\oplus \mu_n[Y])/\mu_n)$, given by the 1-cocycle 
    \be\label{zc} z(g)= (\sum_x \omega(x^{-1},g)[x], \sum_y \omega(g,g^{-1}y)[y]) \mod \mu_n.\ee We now claim
    \begin{prop}[cf. Proposition \ref{prop:iota}]\label{pzc}
    	The boundary map in \eqref{bound} sends the class $[z]$ to the class $[\omega]\in H^2(G,\mu_n)$.
    \end{prop}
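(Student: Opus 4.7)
The plan is to unwind the definition of the connecting homomorphism $\partial$ from the long exact sequence \eqref{bound} on the explicit $1$-cocycle $z$ written in \eqref{zc}, and then to invoke the $2$-cocycle identity on $\omega$ twice (once in the $X$-component and once in the $Y$-component). First I recall how $\partial$ is computed at the level of cocycles: starting from a $1$-cocycle $\bar z:G\to (\mu_n[X]\oplus\mu_n[Y])/\mu_n$, one picks a set-theoretic lift $\tilde z:G\to \mu_n[X]\oplus\mu_n[Y]$ and forms
\[
\omega'(g_1,g_2)\;=\;g_1\tilde z(g_2)-\tilde z(g_1g_2)+\tilde z(g_1),
\]
which a priori lies in $\mu_n[X]\oplus\mu_n[Y]$ but projects to $0$ in the quotient, hence takes values in the diagonally embedded $\mu_n$. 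Its class in $H^2(G,\mu_n)$ is $\partial[\bar z]$.

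Next I would take as lift $\tilde z(g)=\bigl(\sum_x \omega(x^{-1},g)[x],\,\sum_y \omega(g,g^{-1}y)[y]\bigr)\in \mu_n[X]\oplus\mu_n[Y]$, i.e. the obvious choice obtained by deleting the ``$\mod \mu_n$'' in \eqref{zc}, and compute the $X$- and $Y$-components of $\omega'(g_1,g_2)$ separately. For the $X$-component, substituting $u=g_1x$ to move the $G$-action inside the sum, the coefficient at index $u$ becomes
\[
\omega(u^{-1}g_1,g_2)\cdot \omega(u^{-1},g_1g_2)^{-1}\cdot \omega(u^{-1},g_1).
\]
I would then apply the $2$-cocycle identity \eqref{coc3} (written multiplicatively, with trivial $G$-action on $\mu_n$) to the triple $(u^{-1},g_1,g_2)$, which rearranges to
\[
\omega(u^{-1}g_1,g_2)=\omega(g_1,g_2)\,\omega(u^{-1},g_1g_2)\,\omega(u^{-1},g_1)^{-1},
\]
so that the coefficient above collapses to $\omega(g_1,g_2)$, independent of $u$. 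Hence the $X$-component of $\omega'(g_1,g_2)$ equals $\omega(g_1,g_2)\sum_x [x]$.

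A completely analogous computation for the $Y$-component yields the coefficient
\[
\omega(g_2,g_2^{-1}g_1^{-1}y)\cdot\omega(g_1g_2,g_2^{-1}g_1^{-1}y)^{-1}\cdot\omega(g_1,g_1^{-1}y),
\]
and this time the cocycle identity applied to the triple $(g_1,g_2,g_2^{-1}g_1^{-1}y)$ gives exactly $\omega(g_1,g_2)$, again independent of $y$. Therefore $\omega'(g_1,g_2)$ equals $\omega(g_1,g_2)$ times the diagonal element $(\sum_x[x],\sum_y[y])$, i.e.\ it is the image of $\omega$ under the inclusion $\mu_n\hookrightarrow \mu_n[X]\oplus\mu_n[Y]$ in \eqref{exactm}. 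This shows $\partial[z]=[\omega]$ in $H^2(G,\mu_n)$, as claimed. The only real obstacle is keeping track of how the left $G$-action on $\mu_n[X]$ and $\mu_n[Y]$ reshuffles the indices before applying the cocycle identity; once the substitutions $u=g_1x$ and $u=g_1y$ are in place, the identity is applied once in each component and everything falls out.
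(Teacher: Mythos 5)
Your proof is correct and follows essentially the same route as the paper: take the obvious set-theoretic lift of $z$, form $g_1\tilde z(g_2)-\tilde z(g_1g_2)+\tilde z(g_1)$ componentwise, and collapse each coefficient to $\omega(g_1,g_2)$ via the $2$-cocycle identity. The only cosmetic difference is in the $Y$-component, where the paper first rewrites $z_Y(g)=\sum_y\omega(g,y)[gy]$ before applying the identity, whereas you work directly with the form $\sum_y\omega(g,g^{-1}y)[y]$ and apply the cocycle identity at $(g_1,g_2,g_2^{-1}g_1^{-1}y)$; both bookkeeping choices arrive at the same place.
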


	\begin{proof}
		We have to compute $g_1\bar z(g_2)-\bar z(g_1g_2)+\bar z(g_1)$, for a lift $\bar z$ to $\mu_n[X]\oplus \mu_n[Y]$. We take for $\bar z$ the formula used in \eqref{zc}. In the $X$-coordinate we get
		$$\sum_x \omega(x^{-1},g_2)[g_1x]-\sum_x \omega(x^{-1},g_1g_2)[x]+\sum_x \omega(x^{-1},g_1)[x]$$
		$$= \sum_x \omega(x^{-1}g_1,g_2)[x]- \omega(x^{-1},g_1g_2)[x]+\omega(x^{-1},g_1)[x]=
		\omega(g_1,g_2)\sum_x[x],$$
	where in the last equality we have used the cocycle relation \eqref{coc3}. In the $Y$-coordinate, we first rewrite $z_Y(g)=\sum_y \omega(g,y)[gy]$, and then a similar computation results in $\omega(g_1,g_2)\sum_y [g_1g_2y]=\omega(g_1,g_2)\sum_y [y]$. Notice that the pair\\ $(\omega(g_1,g_2)\sum_x[x],\omega(g_1,g_2)\sum_y [y])$ is the image of $\omega(g_1,g_2)\in \mu_n$ in $\mu_n[X]\oplus \mu_n[Y]$, which proves the result.
	\end{proof}

	In our case we have $X=Y=G$, and the stabilizers are $H_X=H_Y=\{1\}$. Therefore the extreme cohomology groups in \eqref{59} vanish, and we obtain an isomorphism
	
	\be\label{coc} H^1(G,(\mu_n[X]\oplus \mu_n[Y])/\mu_n)\cong H^2(G,\mu_n).\ee We can conclude the following theorem:
	
	\begin{thm}\label{thm:710}
		Suppose that $X=Y=G$, and that $\mathcal O\subset G\times G$ is an irreducible $G$-stable subset. Then
		\begin{enumerate}
			\item $\mathbf h^0(G,\mathcal O)=\mathbf h^1(G,\mathcal O)$,
			\item The group $\mh^2(G,\mathcal O)$ is the $D$-equivalence closure of the set of all cocyclic $G$-matrices with values in $\mu_n^+$ and with support $\mathcal O$.
			\item We have an isomorphism of groups $$\mathbf h^2(G,\mathcal O)/\mh^0(G,\mathcal O)\cong H^2(G,\mu_n).$$
		\end{enumerate}
	\end{thm}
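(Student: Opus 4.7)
The plan is to unify all three statements by constructing a group homomorphism $\Phi:\mathbf{h}^2(G,\mathcal{O})\to H^2(G,\mu_n)$ and analyzing its kernel and image via the long exact sequence \eqref{bound}. The decisive feature of the hypothesis $X=Y=G$ is that the basepoint stabilizers are trivial, so $Hom(H_X,\mu_n)=Hom(H_Y,\mu_n)=0$, and by the Eckmann-Shapiro Lemma (Theorems \ref{shap} and \ref{shap2}) we obtain $H^i(G,\mu_n[X]\oplus\mu_n[Y])=0$ for every $i\ge 1$. Part (1) is then immediate from Theorem \ref{nuniq}(b), which injects $\mathbf{h}^1(G,\mathcal{O})/\mathbf{h}^0(G,\mathcal{O})$ into $H^1(G,\mu_n[X]\oplus\mu_n[Y])/\Delta_* H^1(G,\mu_n)=0$.

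For (3), I would define $\Phi$ as follows. Given $A\in\mathbf{h}^2(G,\mathcal{O})$, irreducibility (Lemma \ref{irred}) forces the diagonal data $(L(g),R(g))$ of Lemma \ref{without cover} to be unique modulo $Triv$, hence they assemble into a well-defined homomorphism $\bar\theta_A:G\to (Mon(X,\mu_n)\times Mon(Y,\mu_n))/Triv$. By Theorem \ref{class_sect} this corresponds to a class $[z_A]\in H^1(G,(\mu_n[X]\oplus\mu_n[Y])/\mu_n)$, and I set $\Phi(A):=\partial[z_A]$ using the boundary map in \eqref{bound}. Because the flanking groups in \eqref{bound} both vanish, $\partial$ is the isomorphism \eqref{coc}, so it suffices to check that $A\mapsto[z_A]$ is a group homomorphism with kernel exactly $\mathbf{h}^0(G,\mathcal{O})$ and that it is surjective. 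The homomorphism property is Proposition \ref{210}(ii): Hadamard multiplication of $A_1,A_2$ corresponds to the pointwise product of monomial covers, which adds the associated $1$-cocycles. For the kernel, $[z_A]=0$ iff $\bar\theta_A$ is $(D_X\times D_Y)/Triv$-conjugate to the unsigned cover, iff $A\sim_D$ (some $G$-invariant matrix), iff $A\in\mathbf{h}^0$. For surjectivity, given $[\omega]\in H^2(G,\mu_n)$, the pure cocyclic matrix $C(\omega)$ restricted to $\mathcal{O}$ lies in $\mathbf{h}^2(G,\mathcal{O})$ by Equation \eqref{autc}, and Proposition \ref{pzc} computes $\Phi(C(\omega))=[\omega]$.

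Part (2) then follows by diagram-chasing: given $A\in\mathbf{h}^2(G,\mathcal{O})$ with $\Phi(A)=[\omega]$, the matrix $A\circ C(\omega)^{\circ -1}$ lies in $\ker\Phi=\mathbf{h}^0(G,\mathcal{O})$, so it is $D$-equivalent to some $G$-invariant matrix $K$ supported on $\mathcal{O}$, say $A\circ C(\omega)^{\circ -1}=LKR^*$ for diagonal $L,R$. Hadamard-multiplying by $C(\omega)$ and using the identity $(LMR^*)\circ N=L(M\circ N)R^*$ for diagonal $L,R$, one gets $A=L(K\circ C(\omega))R^*$, which is $D$-equivalent to the cocyclic matrix $K\circ C(\omega)$. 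The reverse inclusion is Equation \eqref{autc}. The main obstacle I anticipate is bookkeeping the interfaces between the three viewpoints — irreducibility giving a canonical $\bar\theta_A$, Theorem \ref{class_sect} turning monomial covers into $H^1$-classes, and Proposition \ref{pzc} matching the cocyclic class with the boundary $\partial[z_A]$ — so that $\Phi$ is verified to be a well-defined homomorphism on the nose and not merely on $D$-equivalence classes. Once these compatibilities are in place, the rest is a formal consequence of the long exact sequence and the two vanishings above.
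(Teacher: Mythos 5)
Your proposal is correct and follows essentially the same route as the paper: both rely on the Eckmann--Shapiro vanishing $H^i(G,\mu_n[X]\oplus\mu_n[Y])=0$ when $H_X=H_Y=\{1\}$, the canonical $1$-cocycle $z_A$ coming from irreducibility via Lemma~\ref{without cover}, the long exact sequence~\eqref{bound} (so that $\partial$ becomes the isomorphism~\eqref{coc}), and Proposition~\ref{pzc} to identify the image of the cocyclic matrix. The only difference is cosmetic — you prove (3) before (2) and deduce (2) by a diagram chase, whereas the paper proves (2) directly and then assembles the homomorphism $\kappa$ for (3) — but the underlying argument for both parts is identical.
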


    \begin{proof}
    	(1) By Theorem \ref{nuniq} and the Eckmann-Shapiro Lemma, the quotient\\ $\mh^1(G,\mathcal O)/\mh^0(G,\mathcal O)$ is isomorphic to a quotient of $Hom(H_X,\mu_n)\oplus Hom(H_Y,\mu_n)=0$, so (1) follows. For the proof of (2), we have already seen that all cocyclic matrices are cohomology-developed, hence they are in $\mh^2(G,\mathcal O)$. Conversely, suppose that $A\in \mh^2(G,\mathcal O)$. Then $G$ admits a homomorphism $\bar\theta:G \to Mon(X,\mu_n)\times Mon(Y,\mu_n)/Triv$, and a corresponding $1$-cocycle $z\in Z^1(G,(\mu_n[X]\oplus \mu_n[Y])/\mu_n)$, such that $\bar{\theta}(g)=z(g)\cdot(p_X(g),p_Y(g))$. Under the boundary map $\partial$ (which is defined at the level of cocycles) $z$ maps to a $2$-cocycle $\omega\in Z^2(G,\mu_n)$. Let $C=C(\omega)$ be the cocyclic matrix with respect to $\omega$. Let $z_C$ be the $1$-cocycle for $C$ as in equation \eqref{zc}. Then by Proposition \ref{pzc} we have that $E=A\circ C^{\circ -1}$ ($C^{\circ m}$ is the $m$th Hadamard power) has the corresponding $1$-cocycle $z_E=z-z_C$, and $z_E$ maps to $0$ in $Z^2(G,\mu_n)$. Now, by the isomorphism \eqref{coc}, $z_E$ is cohomologous to $0$, which proves that $E$ is diagonally equivalent to a $G$-invariant matrix $K$. It follows that $A$ is diagonally equivalent to the cocyclic matrix $C$, which proves (2).\\
    	
    	For (3), any element $A$ of $\mh^2(G,\mathcal O)$ is invariant under a $G$-action coming from a monomial cover. This monomial cover defines a map $\bar{\theta}:G\to Mon(X,\mu_n)\times Mon(Y,\mu_n)/Triv$, and in turn a $1$-cocycle $z\in Z^1(G,(\mu_n[X]\oplus \mu_n[Y])/\mu_n)$, as was explained in part (2) of the proof. By the irreducibility of $\mathcal O$, $z$ is uniquely determined (cf. the proof of Theorem \ref{nuniq}). Thus $A\mapsto z$ gives a well-defined map $\mh^2(G,\mathcal O)\to H^1(G,(\mu_n[X]\oplus \mu_n[Y])/\mu_n)$, and an application of the boundary map $\partial$ gives us a map $\kappa:\mh^2(G,\mathcal O)\to H^2(G,\mu_n)$. This map is surjective because every 2-cocycle gives rise to a cocyclic matrix. Finally, the kernel of $\kappa$ is the subgroup of all matrices having $z$ cohomologous to $0$. Such matrices are $D$-equivalent to matrices with $z=0$, which are $G$-invariant. Hence the kernel is $\mh^0(G,\mathcal O)$. This proves (3).
    \end{proof}

	The last observation on cocyclic matrices is that if we modify the definition of cocyclic matrices by a certain diagonal equivalence, then the complex space spanned by all modified cocyclic matrices with respect to a given 2-cocycle $\omega$ is closed under matrix multiplication and defines a matrix algebra of dimension $|G|$. 
	\begin{defn}
		A \emph{modified} cocyclic matrix with respect to $G$ and a 2-cocycle $\omega$ is a matrix of the form $K\circ C'(\omega)$, where $K$ is $G$-invariant, and $C'(\omega)_{x,y}=\omega(x^{-1},y)/\omega(y^{-1},y).$
	\end{defn} 
    So a modified cocyclic matrix is just a cocyclic matrix multiplied on the right by the diagonal matrix $\diag(\omega(y^{-1},y)^*)$. We have
    
    \begin{lem}\label{lem:pp}
    	Let $A$ be a modified cocyclic matrix. Then for any $g\in G$, $gA=L(g)AL(g)^*$ for a \emph{diagonal} $L(g)$ depending only on $\omega$.
    \end{lem}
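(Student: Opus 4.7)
The plan is to perform a direct computation of $(gA)_{x,y}$, showing that the conjugating matrices on the left and on the right are forced to be equal (up to conjugate-transpose) by a cancellation arising from the cocycle identity.

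First I would write $A_{x,y} = K_{x,y}\,\omega(x^{-1},y)/\omega(y^{-1},y)$, so that using the $G$-invariance $K_{g^{-1}x,g^{-1}y}=K_{x,y}$ one obtains
$$(gA)_{x,y} \;=\; A_{g^{-1}x,g^{-1}y} \;=\; K_{x,y}\cdot \frac{\omega(x^{-1}g,\,g^{-1}y)}{\omega(y^{-1}g,\,g^{-1}y)}.$$
Next, I would apply the $2$-cocycle identity \eqref{coc3} to the three-tuples $(x^{-1},g,g^{-1}y)$ and $(y^{-1},g,g^{-1}y)$, which gives respectively
$$\omega(x^{-1}g,\,g^{-1}y)=\omega(x^{-1},y)\,\omega(g,g^{-1}y)\,\omega(x^{-1},g)^{-1},$$
$$\omega(y^{-1}g,\,g^{-1}y)=\omega(y^{-1},y)\,\omega(g,g^{-1}y)\,\omega(y^{-1},g)^{-1}.$$

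The crucial step is the cancellation: when one divides these two identities, the common factor $\omega(g,g^{-1}y)$ disappears, leaving
$$(gA)_{x,y}\;=\;K_{x,y}\cdot\frac{\omega(x^{-1},y)}{\omega(y^{-1},y)}\cdot\frac{\omega(y^{-1},g)}{\omega(x^{-1},g)}\;=\;A_{x,y}\cdot\omega(x^{-1},g)^{*}\cdot\omega(y^{-1},g).$$
This is exactly the reason the definition of the \emph{modified} cocyclic matrix uses the normalization $1/\omega(y^{-1},y)$: the symmetric denominator turns what would otherwise be an asymmetric pair $(P(g),Q(g))$ (as in the standard cocyclic setting of \eqref{autc}) into a single scalar function of the index. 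The ``hard'' part is really spotting this cancellation, but once one has the two cocycle expansions, it is forced.

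Finally, I would set $L(g):=\diag\bigl(\omega(x^{-1},g)^{*}\bigr)_{x\in G}$, which is a diagonal $\mu_n$-valued matrix depending only on $g$ and on the cocycle $\omega$ (not on $K$). The previous display then reads $(gA)_{x,y}=L(g)_{x,x}\,A_{x,y}\,L(g)_{y,y}^{*}$, i.e. $gA=L(g)\,A\,L(g)^{*}$, which is the claim.
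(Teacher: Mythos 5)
Your computation is correct and is exactly the ``similar computation to \eqref{autc}'' the paper alludes to: expand $(gA)_{x,y}$, apply the $2$-cocycle identity once in the numerator and once in the denominator, and observe that the factor $\omega(g,g^{-1}y)$ cancels, splitting the remaining sign into a function of $x$ alone times a function of $y$ alone. One small remark: your answer $L(g)=\diag\bigl(\omega(x^{-1},g)^*\bigr)$ is the one that actually satisfies $gA=L(g)AL(g)^*$ as stated; the paper's $L(g)=\diag\bigl(\omega(x^{-1},g)\bigr)$ would instead give $A=L(g)(gA)L(g)^*$, so the paper appears to have a harmless inversion there.
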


    \begin{proof}
    	This is a similar computation to \eqref{autc}, and we get that\\ $L(g)=\diag(\omega(x^{-1},g))$. 
    \end{proof}
    In particular $A$ is invariant under the $(P,P)$-action defined by $P(g)=L(g)p_X(g)$ (To be more precise, $P$ can be lifted to a true homomorphism on $\widetilde{G}$). The details are left to the reader.
    As a corollary we get
    
    \begin{thm}\label{591}
    	The complex space spanned by all modified cocyclic matrices with respect to a specific $\omega$ is a $\mathbb C^*$-algebra of dimension $|G|$. 
    \end{thm}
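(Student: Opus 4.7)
The plan is to identify the complex span of modified cocyclic matrices (with respect to a fixed $\omega$) with the algebra $\mathcal A_P(X)$ attached to the monomial cover produced by Lemma \ref{lem:pp}, and then invoke Theorems \ref{alg1} and \ref{orth}. Concretely, Lemma \ref{lem:pp} supplies diagonal matrices $L(g) = \diag(\omega(x^{-1},g))$ for which every modified cocyclic matrix $A$ satisfies $gA = L(g) A L(g)^*$, equivalently $A = P(g) A P(g)^*$ for the monomial $P(g) = L(g) p_X(g)$ (lifted, where necessary, to a genuine homomorphism on a suitable $\widetilde G$ as remarked in the text). Hence the span of modified cocyclic matrices is contained in $\mathcal A_P(X)$, and by Theorem \ref{alg1}(a) the latter is a $*$-algebra containing the identity and closed under matrix multiplication and conjugate-transpose.

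The remainder of the proof is a dimension count showing that this containment is an equality. On one hand, the space of complex $G$-developed matrices (allowing complex, not just $\mu_n$-valued, coefficients in the defining function $f:G\to\CC$) has dimension $|G|$, and the linear map $K \mapsto K\circ C'(\omega)$ is injective since $C'(\omega)$ has all entries in $\mu_n \subset \CC^\times$; thus the span of modified cocyclic matrices has dimension exactly $|G|$. On the other hand, by Theorem \ref{orth}(a) we have $\dim_{\CC}\mathcal A_P(X)$ equal to the number of $G$-orbits on $X\times X$ that are orientable with respect to $(P,P)$. Here $X=Y=G$ with $G$ acting by left multiplication, and with basepoint $x_0=y_0=1_G$ the stabilizers $H_X=H_Y=\{1\}$ are trivial, so the orientability criterion of Proposition \ref{nc} is vacuous on every orbit. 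The diagonal left action of $G$ on $G\times G$ has exactly $|G|$ orbits, parametrized by $x^{-1}y\in G$, and all are orientable, giving $\dim_{\CC}\mathcal A_P(X) = |G|$.

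Combining the two paragraphs, the span of modified cocyclic matrices coincides with $\mathcal A_P(X)$, which is a $\CC^*$-algebra of dimension $|G|$, as claimed. The main point where care is needed is the verification that $P(g)$ from Lemma \ref{lem:pp} really assembles into a monomial cover so that $\mathcal A_P(X)$ is an algebra in the sense of Theorem \ref{alg1}; this is precisely the purpose of the modification $C'(\omega)_{x,y} = \omega(x^{-1},y)/\omega(y^{-1},y)$, which symmetrizes the $(P,Q)$-twist of a bare cocyclic matrix (see \eqref{autc}) into the $(P,P)$-form required to land in the diagonal algebra rather than a bimodule. Once this is in place, the orbit count is immediate because of the triviality of the stabilizers, and the dimension equality follows.
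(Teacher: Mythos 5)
Your proof is correct and takes essentially the same route as the paper: the paper's proof is a one-line assertion that the span "is just the algebra $\mathcal A_P(X)$ for the $(P,P)$-action coming from Lemma \ref{lem:pp}," and you have filled in precisely the details left implicit (containment via Lemma \ref{lem:pp}, injectivity of $K\mapsto K\circ C'(\omega)$, and the orbit count giving $\dim\mathcal A_P(X)=|G|$ by trivial stabilizers).
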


    \begin{proof}
    	This is just the algebra $\mathcal A_P(X)$ for the $(P,P)$-action, where $P(g)=L(g)p_X(g)=L(g)p_Y(G)$ is coming from Lemma \ref{lem:pp}.
    \end{proof}

	\begin{rem}
		A well known special case of Theorem \ref{591} is the algebra of negacyclic matrices.
	\end{rem}

    \begin{rem}\begin{enumerate}
    		\item For the projective space matrices discussed in \S \ref{sec:proj}, we could have started with the group $PGL_{d+1}(F):=GL_{d+1}(F)/F^\times$ which acts faithfully on $X$ and $Y$. It turns out that the projective-space weighing matrix $A$ is cocyclic  w.r.t. the subgroup $T=\mathbb F_{q^{d+1}}^\times/\mathbb F_q^\times \hookrightarrow PGL_{d+1}(\mathbb F_q)$, of order $q^d+q^{d-1}+\cdots+1$. This group acts freely on $X$ and $Y$, and the matrix $|A|$ is $T$-developed. We may identify $X=Y=T$. The matrix $A$ is therefore a CDM over $T$, and thus is $T$-cocyclic. The group $T$ is sometimes called a \emph{Singer cycle} in the literature. The Cohomology class in $H^2(T,\mu_n)$ corresponding to $A$ by Theorem \ref{thm:710} is the restriction to $T$ of the cohomology class in $H^2(PGL_{d+1}(F),\mu_n)$, coming from the extension $GL_{d+1}(F)\to PGL_{d+1}(F)$. In some cases (like with the Paley Conference matrix, $d=1$ and $n=2$ ) it is nontrivial cocyclic.
    		
    		\item For the Grassmannian and Flag matrices we do not see any subgroup analogous to $T$ and we believe that they are not cocyclic.
    		
    		\item A general modified $G$-cocyclic matrix $A$ is $H^1$-developed w.r.t. the extension group $\widetilde{G}$. In this extension, $1\to Z \to \widetilde{G}\to G \to 1$, the stabilizers are $H_X=H_Y=Z$, and the homomorphisms $\psi_X=\psi_Y:Z\to \mu_n$ are the natural embedding $Z\hookrightarrow Triv=\mu_n$. We could have constructed modified cocyclic matrices by the $H^1$-method, starting from this raw data. The action of the extension group $\widetilde{G}$ on cocyclic matrix axes (which are copies of $G$) is centrally regular.
    		
    	\end{enumerate}
    	
    \end{rem}

	\section*{Acknowledgements} The authors would like to thank  Yossi Strassler for introducing them to the topics of  Hadamard, weighing and cocyclic matrices. Some ideas in this paper were inspired by him. The authors also would like to thank a set of anonymus referees for their careful reading of this paper. Some of the refereees gave us very detailed remarks and suggestions that we have found useful, and thanks to them this paper is (hopefully) much more accessible.
	
	\section*{Data Availability Statement}		
	The authors confirm that the data supporting the findings of this study are available within the article.

\bibliography{bibl}{}
\bibliographystyle{plain}

\appendix

\section{Sections and the 1st cohomology} \label{appA}
	
	Let us mention briefly the basic definitions of the $1$st group cohomology. References on group cohomology are e.g. \cite{1982cohomology} or \cite{Neukirch2008}. Let $G$ be a group, and let $M$ be a $G$-module. This means that $M$ is an abelian group with a $G$-action preserving the group structure of $M$.
	
	\begin{defn}
		\begin{itemize}
			\item[]
			\item[(a)] A \emph{$1$-cocycle} or a \emph{crossed homomorphism} of $G$ w.r.t. $M$ is a function $z:G\to M$ satisfying
			\be \nonumber z(g_1g_2)=z(g_1)+g_1z(g_2), \ \forall g_1,g_2\in G.\ee
			The set $Z^1(G,M)$ of $1$-cocycles is closed under addition and is an abelian group.
			\item[(b)] A \emph{$1$-coboundary} of $G$ w.r.t. $M$ is a function $\gamma:G\to M$ satisfying \be \nonumber \gamma(g)=gm-m, \forall g\in G\ee for some fixed $m\in M$. The set of $1$-coboundaries, $B^1(G,M)$ is a subgroup of $Z^1(G,M)$.
			\item[(c)] The \emph{$1$st cohomology group} is
			\be \nonumber H^1(G,M)\ := \ \frac{Z^1(G,M)}{B^1(G,M)}.\ee
			In particular, if $G$ acts trivially on $M$, then $H^1(G,M)=Hom(G,M).$
			\end{itemize}
	\end{defn}

	Now, suppose that 
	$$1\longrightarrow M \longrightarrow J \longrightarrow G_0 \longrightarrow 1$$ is a split exact sequence of groups with $M$ abelian, and with a designated section $t:G_0\to J$. Moreover let $\beta:G\to G_0$ be a homomorphism.
	Then $g\in G$ acts naturally and in a well defined manner on $m\in M$ via conjugation: $g,m\mapsto \widetilde{\beta(g)} m \widetilde{\beta(g)}^{-1}$, where $\widetilde{\beta(g)}$ is a lifting of $\beta(g)$ to $J$. We shall write the group law of $M$, both in multiplicative and additive notation, depending on whether we see $M$ as a subgroup of $J$, or as an $G$-module. We have a map $G\to J$ given by $s_0:=t\circ\beta$. The situation here is similar to diagram \eqref{diag2}.
	
	\be \label{diag5}
	\begin{tikzcd}
	1 \arrow[r] &M \arrow[r]& J \arrow[swap]{r}{\pi} & G_0 \arrow[bend right=40,swap]{l}{t}
	\arrow[r] & 1\\
	& & & G \arrow[swap]{u}{\beta} \arrow[ul,"s",dashed,shorten >= 2pt,shorten <= 2pt]
	\arrow[bend right=-40,dashed,shorten >= 2pt,shorten <= 2pt]{ul}{s_0}
	\end{tikzcd}.
	\ee
	
	 If $s:G\to J$ is another map with $\pi\circ s=\beta$, then we can write $s(g)=z(g)s_1(g)$ for some function $z:G\to M$. Hence, 
	\begin{align*} s(g_1g_2)&=z(g_1g_2)s_0(g_1g_2) \text{ and }\\
	s(g_1g_2)&=s(g_1)s(g_2)=z(g_1)s_0(g_1)z(g_2)s_0(g_2)=z(g_1)g_1(z(g_2))s_0(g_1)s_0(g_2),
	\end{align*}
	where by $g_1(z(g_2))$ we mean the action of $g_1\in G$ on $z(g_2)\in M$. The fact that $s$ is a homomorphism is equivalent then to the condition 
	$$z(g_1g_2)=z(g_1)+g_1(z(g_2)),$$ 
	defining a cocycle (note that we pass to writing in the additive notation of $M$). Now, let $z'$ be another $1$-cocycle, differing from $z$ by a $1$-coboundary. Then $z'(g)=z(g)+gm-m$ for some fixed $m$,
	and $z'$ gives rise to another map $s':G\to  J$, given by $s'(g)=z'(g)s_0(g)$. We will show that $s'$ and $s$ are conjugates of each other by an element of $M$:  
	\begin{gather*} s'(g):=z'(g)s_0(g)=(m^{-1})(gm)z(g)s_0(g) \text{ (multiplicatively written)}\\
	= (m^{-1})(gm)s(g)= m^{-1}s(g)ms(g)^{-1}s(g) \text{ (because $gm=s(g)ms(g)^{-1}$)}\\
	=m^{-1}s(g)m.\end{gather*}
	We conclude:
	\begin{thm}\label{class_sect}
		$H^1(G,M)$ is in bijection with the set of all homomorphisms $s:G\to J$, satisfying $\pi\circ s=\beta$, where two homomorphisms $s,s'$ are being identified if and only if they are conjugates over $M$.
	\end{thm}

	\subsection{$H^2$ of a group}
    We shall review briefly the basic facts about the second cohomology of a group. For a reference, see \cite{1982cohomology}.\\
    
    Let $G$ be any group and $M$ an $G$-module. 
    \begin{defn}
    	\begin{itemize}
    		\item[(a)] A 2-cocycle of $G$ with values in $M$  is a function $\omega:G\times G\to M$ satisfying
    		\be \label{coc3} g_1\omega(g_2,g_3)-\omega(g_1g_2,g_3)+\omega(g_1,g_2g_3)-\omega(g_1,g_2)=0,\ee
    		for all $g_1,g_2,g_3\in G$.
    		The set $Z^2(G,M)$ of all $2$-cocycles is an abelian group under addition.
    		\item [(b)] A 2-coboundary of $F$ with values in $M$ is a function $\omega:G\times G\to M$ such that \be \nonumber \omega(g_1,g_2)=g_1z(g_2)-z(g_1g_2)+z(g_1),\ee for some function $z:G\to M$. The set $B^2(G,M)$ of all  coboundaries is an abelian subgroup of $Z^2(F,M)$.
    		\item[(c)] The 2nd cohomology group of $G$ with coefficients in $M$ is 
    		\be \nonumber H^2(G,M)\ :=\ \frac{Z^2(G,M)}{B^2(G,M)}.\ee 
    	\end{itemize}
    \end{defn}

    For any exact sequence of $G$-modules
    $$0\to M'\to M\to M'' \to 0$$ 
    there is a long exact sequence in cohomology, a portion of which is 
    
    \be\label{long}  H^1(G,M')\to H^1(G,M)\to H^1(G,M'')\stackrel{\partial}{\longrightarrow} H^2(G,M')\to H^2(G,M).
    \ee
    All maps of the sequence, except $\partial$, come from the maps between the modules.
    The map $\partial$, called the \emph{connecting homomorphism}, can be made explicit as follows. A cohomology class $c\in H^1(G,M'')$ can be represented by a $1$-cocycle $\bar z:G\to M''$. This is lifted to a {\bf function} $z:G\to M$, generally not a 1-cocycle. Then  $\omega(g_1,g_2):=g_1z(g_2)-z(g_1g_2)+z(g_1)$ is generally not zero, and by definition $\omega(-,-)$ is a 2-coboundary with values in $M$. But when projecting to $M''$ $\omega(g_1,g_2)$ is zero, as $\bar z$ was a cocycle. By exactness this implies that $\omega(g_1,g_2)\in M'$. As an $M'$-valued function $\omega$ is now a cocycle, but  generally no longer a coboundary. Its class in $H^2(F,M')$ is the target $\partial c$.\\

 	 \section{Induction - the Eckmann-Shapiro Lemma}\label{appB} Let $G$ be a finite group and $H\subseteq G$ a subgroup. There is a natural functor taking a $G$-module $M$ and returning an $H$-module, which is simply the restriction to $H$. There is functor in the opposite direction, taking an $H$-module $N$ and returning a $G$-module $M$. This is the induction of modules. It is defined by 
	 $$ Ind_H^G N:=\ZZ[G]\tensor_{\ZZ[H]} N.$$ Here $\ZZ[G]$ is the group-ring of $G$ whose elements are given by finite formal sums $\sum n_i[g_i]$ for $n_i\in \ZZ$ and $g_i\in G$, and the multiplication is the extension by linearity from the multiplication in $G$. The induced module is given a structure of a $G$-module by the rule
	 $$ g\cdot ([g']\tensor n)\ = \ [gg']\tensor n.$$
	 There is yet another functor in this direction, the \emph{co-induction}, denoted by $CInd_H^G(-)$, and defined by 
	 $$ CInd_H^G(N) \ :=\ Hom_{\ZZ[H]}(\ZZ[G],N).$$ 
	 equivalently $$CInd_H^G N \ = \ \left\{\phi:\ZZ[G]\to N \ \big| \ \phi([hg])=h\phi([g]) \ \forall h\in H \text{ and }  g\in G\right\}.$$
	 The co-induced module also has a structure of a $G$-module, defined by
	 $$ (g\cdot \phi)(g') \ =\ \phi(g'g^{-1}).$$
	 
	 Using coset decompositions $G=\bigsqcup_i Hg_i=\bigsqcup_i g_i^{-1}H$, it is easy to see that there are isomorphisms $$Ind_H^GN\cong \ZZ[G/H]\tensor_\ZZ N$$ and $$CInd_H^GN\cong Hom_\ZZ(\ZZ[H\backslash G],N).$$

	 If $[G:H]<\infty$ this can be used to set up an isomorphism $Ind_H^GN\cong CInd_H^GN$ of $G$-modules by identifying $H\backslash G\cong G/H$ via the inverse map, and using the notion of dual bases to compare tensor with hom. But this isomorphism is not canonical and depends on the choice of coset representatives. Also at inifnite index the induced and co-induced modules aren't generally isomorphic.\\

	 In the following theorem we will use double argument function, such as $z(a)(b)$. This means that $z(a)$ is a function of $b$, and $z(a)$ depends on $a$. Such expressions occur when we have a function $z:A\to CInd_H^G N$, so $z(a)$ is itself a function of $b\in \ZZ[G]$. 
 	 
 	 \begin{thm}[Eckmann-Shapiro Lemma, see \cite{1982cohomology}, p.72 or \cite{Neukirch2008} p. 62]\label{shap}
 	 	\begin{itemize}
			\item[]
 	 		\item[(a)] For all $i\ge 0$ there is an isomorphism \be \nonumber H^i(H,M)\cong H^i(G,CInd_H^G M). \ee
 	 		\item[(b)] The map $H^1(G,Ind_H^G M)\to H^1(H,M)$ can be defined at the level of cocycles as follows:
 	 		A $1$-cocycle $z:G\to CInd_H^G M$ is mapped to the $1$-cocycle $$y(h) = z(h)(1_G),$$ defined for $h\in H$.
 	 		\item[(c)] The inverse map can be defined at the level of cocycles as follows: First pick up a set of coset representatives $\{g_i\}$ so that $G=\bigcup_i Hg_i$. For any $g\in G$, let $\bar g$ be the unique coset representative in the coset $Hg$.   
 	 		Then a $1$-cocycle $y:H\to M$ is mapped to the $1$-cocycle $z:G\to Ind_H^G M$ defined by
 	 		$$z(g')(g) = (g\bar{g}^{-1})y(\bar gg'(\overline{gg'})^{-1}),$$ defined for $g,g'\in G$.
 	 	\end{itemize}
  	 \end{thm}

    \begin{proof}(Sketch) We discuss the general case of higher cohomology. The proof is intended for readers with sufficient background and is given for the sake of completeness.\\
    	
    If $P$ is an $G$-module, the cohomology groups $H^i(G,P)$ ($i>0$) are defined as the homology groups of the cochain complex $\mathcal X(G,P)$, defined as
    $$\mathcal X^i(G,P)=Hom(\ZZ[G^{i+1}],P)^{G}.$$
    Here $\ZZ[G^i]$ is the group ring, and $G$ acts from the left. The superscript $G$ means taking the $G$-equivariant homomorphisms. The differentials are the usual simplicial differentials obtained from alternating sums of simplicial face maps.\\
    
    The cohomology $H^i(H,M)$ is obtained similarly from the cochain complex $\mathcal X(H,M)$, but it can also be obtained from the complex $\mathcal Y(G,M)$ given by $$\mathcal Y^i(G,M)=Hom(\ZZ[G^{i+1}],M)^H.$$
    
    It is not hard to show that there is an isomorphism of cochain complexes
    \be\label{eq:cochain} \mathcal Y^i(G,M)\cong \mathcal X^i(G,Ind_H^G M).\ee  
	This isomorphism proves part (a) of the theorem.\\
	
	The isomorphism \eqref{eq:cochain} can be given explicitly as follows: An element $z\in Hom(\ZZ[G^{i+1}],Ind_H^G M)^G$, is mapped to $x\in Hom(\ZZ[G^{i+1}],M)^H$, given by $x(g_0,\ldots,g_i)=z(g_0,\ldots,g_i)(1_G)$. In the other direction, for $x$ we match $z$ such that \be\label{ind} z(g_0,\ldots,g_i)(g):=x(gg_0,\ldots,gg_i),\ee (see \cite{Neukirch2008} p.62).\\
    
    We now connect the above two computations of the cohomology $H^i(H,M)$ coming from \eqref{eq:cochain}. The missing part is the morphism of free $H$-modules, $\ZZ[G^{i+1}]\to \ZZ[H^{i+1}]$ given by $[(g_0,\ldots,g_i)]\mapsto [(g_0\overline{g_0}^{-1},\ldots g_i\overline{g_i}^{-1})]$. This map is a morphism between free $H$-resolutions of $\ZZ$. In the other diresction the map $\ZZ[H^i]\to \ZZ[G^i]$ is given by the natural inclusion $H\subseteq G$. Both maps give rise to aa chain homotopy equivalence $\ZZ[H^\bullet]\sim \ZZ[G^\bullet]$. Now, for $y\in Hom(\ZZ[H^{i+1}],M)^{H}$, we compute the corresponding $z\in \mathcal X^i(G,Ind_H^G M)$ by 
    $$z(g_0,\ldots,g_i)(g)=x(gg_0,\ldots,gg_i)=y(gg_0\overline{gg_0}^{-1},\ldots,gg_i\overline{gg_i}^{-1}).$$
    This is the homogeneous form of the formula in (c). For the non-homogeneous form, we take $g_0=1$ and then use the $H$-equivariance of $y$ to obtain (c). The proof of (b) is by taking the other direction of the chain homotopy equivalence.
    \end{proof}

\end{document}